\documentclass[12pt,reqno]{amsart}

\usepackage{a4wide}

\numberwithin{equation}{section}
\usepackage{mathrsfs}
\usepackage{amsfonts}
\usepackage{amsmath}
\allowdisplaybreaks[4]
\usepackage{stmaryrd}
\usepackage{amssymb}
\usepackage{amsthm}
\usepackage{mathrsfs}
\usepackage{url}
\usepackage{amsfonts}
\usepackage{amscd}
\usepackage{indentfirst}
\usepackage{enumerate}
\usepackage{amsmath,amsfonts,amssymb,amsthm}
\usepackage{amsmath,amssymb,amsthm,amscd}
\usepackage{graphicx,mathrsfs}
\usepackage{appendix}
\usepackage{color}

\usepackage[numbers,sort&compress]{natbib}

\renewcommand{\d}{\mathrm{d}}

\newcommand{\R}{\mathbb{R}}

\newcommand{\Rn}{\mathbb{R}^n}

\renewcommand{\S}{\mathbb{S}}
\numberwithin{equation}{section}

\newtheorem{thm}{Theorem}[section]
\newtheorem{lem}{Lemma}[section]
\newtheorem{prop}{Proposition}[section]

\newtheorem{cor}{Corollary}[section]
\newtheorem{rem}{Remark}[section]

\newcommand{\lam}{\lambda}
\newcommand{\pa}{\partial}
\newcommand{\var}{\varepsilon}
\newcommand{\be}{\begin{equation}}
	\newcommand{\ee}{\end{equation}}

\newcommand{\al}{\alpha}




\begin{document}
		
		\title[]
		{\textbf{New type of bubbling solutions to a critical fractional Schr\"{o}dinger equation
				with double potentials }}
		\author{Ting Li}
		\author{Zhongwei Tang $^{\ast}$}
	\thanks{$^{\ast}$Z. Tang is supported by National Science Foundation of China (12071036, 12126306).}
			\author{Heming Wang}
			\author{Xiaojing Zhang}
		\date{}
		
		\maketitle

		\begin{abstract}
			In this paper, we study the following  critical fractional Schr\"odinger equation:
			\be\label{0.1}
			(-\Delta)^s u+V(|y'|,y'')u=K(|y'|,y'')u^{\frac{n+2s}{n-2s}},\quad u>0,\quad y =(y',y'') \in \R^3\times\R^{n-3},
			\ee	where $n\geq 3$, $s\in(0,1)$,  $V(|y'|,y'')$ and $K(|y'|,y'')$ are two bounded nonnegative potential functions. Under the conditions that $K(r,y'')$ has a stable critical point $(r_0,y_0'')$ with $r_0>0$, $K(r_0,y_0'')>0$ and $V(r_0,y_0'')>0$, we prove that equation \eqref{0.1} has a new type of infinitely many solutions that concentrate at points lying on the top and the bottom of a cylinder. In particular, the bubble solutions can concentrate at a pair of symmetric points with respect to the origin. Our proofs make use of a modified finite-dimensional
			reduction method and local Pohozaev identities. 
		\end{abstract}
		{\noindent \small \bf Key words:} Fractional Schr\"odinger equation, Critical exponent, Double potentials, Reduction method, Local Pohozaev identities. 
		
		{\noindent \small \bf Mathematics Subject Classification (2020)}\quad 35R11 ·  47J30 ·  35Q40
		

		\section{Introduction and the main results}\label{sec1}
		\setcounter{equation}{0}
		
		In this paper, we consider the following  fractional Schr\"odinger equation with critical exponent and double potentials:
		\be \label{1.1}
		(-\Delta)^s u+V(y)u=K(y)u^{2_s^*-1},\quad u>0 \quad \text { in }\, \Rn,
		\ee
		where $ n \geq 2 s+1$, $0<s<1$, $2_s^*=\frac{2n}{n-2s}$ is the fractional
		critical Sobolev exponent and $V(y)$ and $K(y)$ are bounded nonnegative functions. For any $s \in(0,1)$,  $(-\Delta)^s$ is the fractional Laplacian, which is a nonlocal operator defined as
		$$
		(-\Delta)^s u(y) =c(n, s) \,\mathrm{P.V.}\int_{\Rn} \frac{u(y)-u(x)}{|x-y|^{n+2 s}} \,\d x =c(n, s) \lim _{\var \rightarrow 0^{+}} \int_{\Rn \backslash B_\var(y)} \frac{u(y)-u(x)}{|x-y|^{n+2 s}} \,\d x,
		$$
		where  P.V. is the Cauchy principal value and $c(n, s)=\pi^{-(2 s+\frac{n}{2})} \frac{\Gamma(\frac{n}{2}+s)}{\Gamma(-s)}$. This operator is well defined in $C_{loc}^{1,1}(\Rn) \cap \mathcal{L}_s(\Rn)$, where
		$$
		\mathcal{L}_s(\Rn)=\Big\{u \in L_{l o c}^1(\Rn): \int_{\Rn} \frac{|u(y)|}{1+|y|^{n+2 s}} \, \d y <\infty\Big\} .
		$$ For more details on the fractional Laplace operator, we refer to \cite{DPV2012,CLM2020} and the references therein.
		
		The fractional Laplacian operator emerges in various fields including physics, mathematical finance, and biological modeling. It can be viewed as the infinitesimal generators of stable Lévy processes (see \cite{D2009}). From a mathematical perspective, a significant characteristic of the fractional Laplacian is its non-locality, which makes it more challenging than the classical Laplacian. Therefore, the fractional Laplacian has been extensively studied in both fundamental mathematical research and applications to specific problems, such as \cite{A2017,A20172,BCD2012,BCDS2013,CS2007,DDW2014,JLX2014,YYY2015} and the references therein.
		
		Solutions of problem \eqref{1.1} are related to the existence of standing wave solutions to the following fractional Schr\"odinger equation:
		\begin{equation*}
			\left\{
			\begin{aligned}
				&i\pa_t\Psi+ (-\Delta)^s\Psi=F(x,\Psi) \quad &&\text{ in } \Rn,\\
				&\lim_{|x|\to\infty}|\Psi(x,t)|=0 \quad &&\text{ for all } t>0.
			\end{aligned}
			\right.\end{equation*}
		That is, solutions with the form $\Psi(x,t)=e^{-ict}u(x)$, where $c$ is a constant.

		When $s=1$ and $K(y)\equiv 1$, Chen, Wei and Yan \cite{CWY2012} considered the following nonlinear elliptic equation
		\be\label{0.A}
		-\Delta u+V(|y'|,y'')u=u^{\frac{n+2}{n-2}},\quad u>0,\quad u\in H^1(\Rn).
		\ee
		They proved that \eqref{0.A} has infinitely many non-radial solutions if $n \geq 5$, $V (y)$ is radially symmetric and $r^2V(r)$ has a local maximum point, or a local minimum point $r_0 > 0$ with $V(r_0)>0$. Later, Peng, Wang and Wei \cite{peng2019constructing} constructed infinitely many solutions on a circle under a weak symmetric condition of $V(y)$, where they only required that $r^2V(r,y'')$ has a stable critical point $(r_0,y''_0)$ with $r_0>0$ and $V(r_0,y''_0)>0$. Morover, He, Wang and Wang\cite{HWW2022} proved that the solutions obtained in \cite{peng2019constructing} are nondegenerate. Recently, Duan, Musso and Wei\cite{duan2023doubling} constructed infinitely many solutions, where the bubbles were concentrated at points lying on the top and the bottom circles of a cylinder. Soon after, the above result was generalized to the fractional and polyharmonic cases, see \cite{DHW2023A,GGH2023}. 

		The purpose of the present paper is concerned with the existence of different types of  solutions for \eqref{1.1} with a more complex concentration structure, which cannot be reduced to a two-dimensional
		one. Solutions with similar structure to the related problem can be found in \cite{DHWW2023,GG,CW}. Moreover, due to the presence of the double potentials $V$ and $K$, it is imperative to thoroughly analyze their implications on our problem and identify the more significant roles they play. In contrast to \cite{hww2022}, in our case, the sign of $\Delta K(r_0, y''_0 )$ held no influence, it is $V (r_0, y''_0)$ that assumes a pivotal role in the process of constructing solutions.
		Motivated by these works, we assume that $n\geq 3$, $V(y)$ and $K(y)$ satisfy the following conditions:
		\begin{itemize} 
			\item[$(K_1)$]  $V(y)=V(|y'|, y'')=V(r, y'')$ and $K(y)=K(|y'|, y'')=K(r, y'')$ are nonnegative bounded functions, where $y=(y', y'') \in \mathbb{R}^3 \times \mathbb{R}^{n-3}$.
			
			\item[$(K_2)$] $ K(r, y'')$ has a critical point $y_0=(r_0, y_0'')$ satisfying $r_0>0$, $K(r_0, y_0'')>0$ and $$\deg\Big(\nabla(K(r,y''),(r_0,y''_0))\Big)\neq 0.$$
			\item[$(K_3)$] $V(r, y'') \in C^{2}(B_\kappa(r_0, y_0''))$, $K(r, y'') \in C^{3}(B_\kappa(r_0, y_0''))$, where $\kappa>0$ is a small constant and $V(r_0, y''_0) >0$.
			
			Without loss of generality, we assume that
			$$
			K(r_0, y_0'')=1.
			$$
		\end{itemize}
		\begin{rem}
			Recently, Du, Hua and Wang \cite{DHW2023A} constructed a new type of solutions for equation \eqref{1.1} under the condition $K\equiv1$. However, it is clear from the condition $(K_2)$ that our situation is totally different.
		\end{rem}
		
		Let $D$ be the completion of the space $C_c^{\infty}(\Rn)$ under the norm$$
		\|(-\Delta)^{\frac{s}{2}} u\|_{L^2(\Rn)}=\Big(\int_{\Rn} (1+|\eta|^{2 s})|\mathscr{F} u(\eta)|^2 \,\d \eta\Big)^{\frac{1}{2}},
		$$
		where $\mathscr{F} u(\eta)=\frac{1}{(2 \pi)^{\frac{n}{2}}} \int_{\Rn}  e^{-i \eta \cdot x} u(x) \,\d x $ is the Fourier transformation of $u$.
		
		We will construct solutions in the following space
         $$H^s(\Rn)=\Big\{u \in D: \int_{\R^N} V(y)u^2 \d y < \infty\Big\},$$
		with the norm
		$$\|u\|_{H^s(\Rn)}=\Big(\|(-\Delta)^{\frac{s}{2}}u\|_{L^2(\Rn)}^2+\int_{\Rn}V(y)u^2 \,\d y\Big)^\frac{1}{2}.$$
		
		For any $x \in \Rn$ and $\lam  >0$, set
		$$
		U_{x, \lam}(y)=2^{\frac{n-2s}{2}}\Big( \frac{\Gamma(\frac{n+2 s}{2})}{\Gamma(\frac{n-2s}{2})}\Big)^{\frac{n-2s}{4 s}}\Big(\frac{\lam}{1+\lam  ^2|y-x|^2}\Big)^{\frac{n-2s}{2}}.
		$$
		Then for every $(x, \lam  ) \in \Rn\times(0,\infty)$, $U_{x, \lam  }$ is the solution to
		\be\label{1.4}
		(-\Delta)^s u=u^{\frac{n+2 s}{n-2s}}, \quad u>0 \quad \text { in } \,\Rn.
		\ee
		Moreover, the functions
		$$Z_0(y)=\frac{\pa  U_{x,\lam }(y)}{\pa  \lam }\Big|_{\lam =1} , \quad Z_i(y)=\frac{\pa  U_{x,\lam }(y)}{\pa  y_i}, \quad i=1,2,\ldots,n$$
		solving the linearized problem
		$$(-\Delta)^s \varphi =(2_s^*-1)U_{x,\lam }^{2_s^*-2}\varphi,\quad \varphi>0 \quad \text{ in }\,\Rn,$$
		are the kernels of the linearized operator associated with problem \eqref{1.4}.
		
		Define the symmetric Sobolev space
		\begin{equation*}
			\begin{aligned}
				H_s=\Big\{u : &u \in H^s(\Rn), u \text{ is even in}~ y_2, y_3, \\ &u(r\cos\theta,r\sin\theta,y_3,y'')=u\Big(r\cos\big(\theta+\frac{2j\pi}{k}\big),r\sin\big(\theta+\frac{2j\pi}{k}\big),y_3,y''\Big),j=1,\ldots,k\Big\},
			\end{aligned}
		\end{equation*}
		where $r=\sqrt{y_1^2+y_2^2}$ and $\theta=\arctan\frac{y_2}{y_1}$.
		
		Let 
		$$x_j^{\pm}=\Big(\bar{r} \sqrt{1-\bar{h} ^2} \cos \frac{2(j-1)\pi}{k}, \bar{r} \sqrt{1-\bar{h}^2}\sin\frac{2(j-1)\pi}{k},\pm\bar{r}\bar{h},\bar{y}''\Big),  \quad j=1,\ldots,k,$$
		where $\bar{y}''$ is a vector in $\R^{n-3}$, $\bar{h}\in (0,1)$ and $(\bar{r},\bar{y}'')$ is close to $(r_0,y_0'')$.
		
		Compared with the construction of bubble solutions, we need to overcome the difficulty caused by the parameter $\bar{h}$. In fact, expansions for the main term of energy functional are determined by the order of $\bar{h}$. Thus we should consider the order of $\bar{h}$. In this paper, we consider three cases of $\bar{h}$ in the process of constructing solutions:
		(i) $\bar{h}$ goes to 1;
		(ii) $\bar{h}$ is separated from 0 and 1;
		(iii) $\bar{h}$ goes to 0.
		
		We would like to point out that in case (ii) the distance between these points $\{x^+_j\}_{j=1}^k$ ($\{x^-_j\}_{j=1}^k$) is $2r_0\bar{h}$
		which is quite similar to the corresponding points in \cite{peng2019constructing}. While in case (i), $\{x^+_j\}_{j=1}^k$ and $\{x^-_j\}_{j=1}^k$ will go to the North pole and the South pole of $\S^2:=\{(x_1,\ldots,x_n)\in \Rn: x_1^2+x_2^2+x_3^2=r_0^2, (x_4,\ldots,x_n)=y_0''\}$ simultaneously. In case (iii), $\{x^+_j\}_{j=1}^k$ and $\{x^-_j\}_{j=1}^k$ will go to the circle $\S_0^1:=\{(x_1,\ldots,x_n)\in \Rn: x_1^2+x_2^2=r_0^2, x_3=0, (x_4,\ldots,x_n)=y_0''\}$ at the same time, so the distance between these points $\{x^+_j\}_{j=1}^k$, $\{x^-_j\}_{j=1}^k$ may be very close.
		
		The idea of constructing solutions is to glue some $U_{x_j^\pm,\lam }$ as an approximation solution. Similar to \cite{peng2019constructing}, in order to not only deal with the slow decay of this function when $n$ is not big, but also simplify some computations, we introduce the smooth cut off function $\eta(y)=\eta(|y'|,y'')$ satisfying $\eta=1$ if $|(r,y'')-(r_0,y_0'')|\leq\sigma$, $\eta=0$ if $|(r,y'')-(r_0,y_0'')|\geq 2\sigma$, $0\leq\eta\leq1$ and $|\nabla \eta|\leq C$, where $\sigma>0$ is a small constant such that $V(r,y'')>0$, $K(r,y'')>0$ if $|(r,y'')-(r_0,y_0'')|\leq10\sigma$.
		
		Denote
		$$Z_{x_j^{\pm},\lam }=\eta U_{x_j^{\pm},\lam },\quad Z_{\bar{r},\bar{h},\bar{y}'',\lam }^*=\sum_{j=1}^{k} U_{x_j^{+},\lam }+\sum_{j=1}^{k} U_{x_j^{-},\lam },\quad Z_{\bar{r},\bar{h},\bar{y}'',\lam }=\sum_{j=1}^{k} \eta U_{x_j^{+},\lam }+\sum_{j=1}^{k} \eta U_{x_j^{-},\lam }.$$

		As for the case (i), we assume that $\al =1-\nu$, $k>0$ is a large integer, $\lam  \in [L_0k^{\frac{n-2s}{n-4s-\al }},L_1k^{\frac{n-2s}{n-4s-\al }}]$ for some constants $L_1>L_0>0$ and $(\bar{r},\bar{h},\bar{y}'')$ satisfies
		\be \label{1.5}
		|(\bar{r},\bar{y}'')-(r_0,y''_0)|\leq \theta_k, \quad \sqrt{1-\bar{h}^2}=\frac{1}{M_1\lam ^{\frac{\al }{n-2s}}},
		\ee
		where $\theta_k=k^{\frac{(2s-n)(2s+1-\beta)}{2(n-4s-\al)}}$ with $\beta=\frac{\al}{n-4s}$, $\nu>0$  is a small constant and $M_1$ is a positive constant.
		
		For $n\geq4$, we assume that $s$ satisifies:
		\be
		\left\{\begin{aligned}\label{0}
			&\frac{5+2n-\sqrt{4n^2-2n+25}}{8}<s<\frac{3n-3-\sqrt{n^2-2n+9}}{8},&&\quad \text{ if } n=4,5,\\
			&\frac{3n-3+\sqrt{n^2-2n+9}}{8}<s<1,&&\quad \text{ if } n\geq6.
		\end{aligned}\right.
		\ee
		\begin{thm}\label{theo1.1}
			Suppose that $n\geq 4$ and $s$ satisfies \eqref{0}. If $V(y)$ and $K(y)$ satisfies $(K_1)$-$(K_3)$, then there exists a positive integer $k_0>0$, such that for any integer $k>k_0$, problem $(\ref{1.1})$ has a solution $u_k$ of the form
			\be \label{1.6}
			u_k=Z_{\bar{r},\bar{h},\bar{y}'',\lam _k}+\varphi_k,
			\ee
			where $\lam _k \in [L_0k^{\frac{n-2s}{n-4s-\al }},L_1k^{\frac{n-2s}{n-4s-\al }}]$ and $\varphi_k \in H_s$. Moreover, as $k \to \infty$, $|(\bar{r}_k,\bar{y}''_k)-(r_0,y''_0)|= o(1)$,  $\sqrt{1-\bar{h}_k^2}=\frac{1}{M_1\lam _k^{\frac{\al }{n-2s}}}$ and $\lam _k^{-\frac{n-2s}{2}}\|\varphi_k\|_{L^{\infty}(\Rn)} \to 0$.
		\end{thm}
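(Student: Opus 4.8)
The plan is a finite-dimensional Lyapunov--Schmidt reduction in the symmetric space $H_s$, supplemented by local Pohozaev identities, an energy expansion, and a topological degree argument to solve the resulting finite-dimensional system. I would look for a solution of the form \eqref{1.6} with $\varphi_k$ in the subspace $E\subset H_s$ that is orthogonal, in the bilinear form $\langle u,v\rangle=\int(-\Delta)^{s/2}u\,(-\Delta)^{s/2}v+\int Vuv$, to the approximate kernels $\partial_{\tau_\ell}Z_{x_j^{\pm},\lambda}$ obtained by differentiating the bubbles in $\lambda$, $\bar r$, $\bar y''$ (and, in the ``modified'' part of the scheme, possibly $\bar h$), the remaining translation kernels being excluded by the symmetry built into $H_s$. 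The first quantitative task is to bound, in a suitable weighted $L^\infty$ (or dual) norm, the error $\mathcal R:=(-\Delta)^sZ_{\bar r,\bar h,\bar y'',\lambda}+VZ_{\bar r,\bar h,\bar y'',\lambda}-KZ_{\bar r,\bar h,\bar y'',\lambda}^{2_s^*-1}$. This is where the geometry enters: the points $\{x_j^+\}$ lie on a circle of radius $\bar r\sqrt{1-\bar h^2}$, so in case (i) — where $\sqrt{1-\bar h^2}=1/(M_1\lambda^{\alpha/(n-2s)})\to0$ and $\lambda\sim k^{(n-2s)/(n-4s-\alpha)}$ — both the top and bottom rings collapse toward the poles of $\S^2$ while the bubbles concentrate, and one must control the within-ring interactions $\sum_{i\ne j}\int U_{x_i^+,\lambda}^{2_s^*-1}U_{x_j^+,\lambda}$, the top--bottom interaction $\sum_{i,j}\int U_{x_i^+,\lambda}^{2_s^*-1}U_{x_j^-,\lambda}$, the potential contribution $\int VZ^2$, and the defect caused by $K$ being non-constant near $(r_0,y_0'')$. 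The cut-off $\eta$ and the restriction \eqref{0} on $s$ serve precisely to tame the slowly decaying bubble tails (of order $|y|^{-(n-2s)}$) for small $n$ in the presence of nonlocality.

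Next I would develop the linear theory: show that the projected linearized operator $v\mapsto\Pi\big[(-\Delta)^sv+Vv-(2_s^*-1)KZ^{2_s^*-2}v\big]$ is invertible from $E$ onto itself, with inverse bounded uniformly in $k$ over the admissible parameter range. This rests on the non-degeneracy of the bubble $U_{x,\lambda}$ — whose linearization has kernel spanned by $Z_0,\dots,Z_n$, as recalled above — together with a blow-up/contradiction argument localized around each concentration point, carried out with care because $(-\Delta)^s$ is nonlocal (one localizes via the Caffarelli--Silvestre $s$-harmonic extension, or via a commutator estimate controlling $\eta(-\Delta)^s-(-\Delta)^s\eta$). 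With this uniform estimate and the error bound of the first step, a contraction mapping in a ball of radius comparable to $\|\mathcal R\|$ yields a unique $\varphi_k=\varphi_k(\bar r,\bar h,\bar y'',\lambda)\in E$, depending in $C^1$ fashion on the parameters, such that
\begin{equation*}
(-\Delta)^s(Z+\varphi_k)+V(Z+\varphi_k)-K(Z+\varphi_k)^{2_s^*-1}=\sum_{j,\ell,\pm}c_{j,\pm}^{\ell}\,Z_{x_j^{\pm},\lambda}^{\ell},
\end{equation*}
where $Z_{x_j^{\pm},\lambda}^{\ell}$ denote the approximate kernels, and $\lambda^{-(n-2s)/2}\|\varphi_k\|_{L^\infty}\to0$.

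It then remains to choose the parameters so that all multipliers $c_{j,\pm}^{\ell}$ vanish; by symmetry this amounts to one scalar condition for each of $\lambda$, $\bar h$, $\bar r$, $\bar y''$. For $\bar r$ and $\bar y''$ I would exploit the reduced energy $F(\bar r,\bar h,\bar y'',\lambda)=I(Z+\varphi_k)$, with $I$ the energy functional of \eqref{1.1}; using the previous step its expansion has leading term of the shape $k\big[A_0+A_1\lambda^{-2s}V(\bar r,\bar y'')-A_2(\text{interactions})+A_3\lambda^{-2}(\text{second-order }K\text{ terms})+\cdots\big]$, so the $(\bar r,\bar y'')$-dependence localizes near the critical point of $K$, and condition $(K_2)$ (nonzero local degree of $\nabla K$ at $(r_0,y_0'')$) is exactly what a Brouwer-degree argument needs to produce a critical point with $|(\bar r,\bar y'')-(r_0,y_0'')|\le\theta_k$. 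For the scaling parameter $\lambda$ and the height $\bar h$, which the critical exponent makes awkward to treat purely variationally, I would instead derive two local Pohozaev identities — testing the equation for $u_k$ (via its $s$-harmonic extension) against $\langle y-x_1^+,\nabla u_k\rangle$ and against the generator of the relevant vertical scaling/rotation, integrated over a fixed small ball about $x_1^+$ — and substitute $u_k=Z+\varphi_k$; after inserting the estimates on $\varphi_k$ these reduce to two algebraic relations whose leading balance forces exactly $\lambda\in[L_0k^{(n-2s)/(n-4s-\alpha)},L_1k^{(n-2s)/(n-4s-\alpha)}]$ and $\sqrt{1-\bar h^2}=1/(M_1\lambda^{\alpha/(n-2s)})$. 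A final fixed-point / degree argument in the compact parameter box \eqref{1.5}, combining the energy information for $(\bar r,\bar y'')$ with the Pohozaev relations for $(\lambda,\bar h)$, closes the proof.

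The main obstacle is the interplay, both in the error estimate of the first step and in the Pohozaev relations of the last, among three competing quantities: the within-ring interaction, the top--bottom interaction — which, because $\bar h\to1$, is of the same order rather than negligible — and the potential term $V(r_0,y_0'')\lambda^{-2s}$. Producing precise expansions of all three, together with the $K$-defect, with genuinely smaller remainders, uniformly in $k$ and over the entire admissible range of $\lambda$ and $\bar h$, is the computational heart of the argument; it is precisely the need to dominate the slowly decaying, nonlocal bubble tails for small $n$ that forces the technical window \eqref{0} on $s$, and the nonlocality also complicates every cut-and-paste step involving $\eta$.
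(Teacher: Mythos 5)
Your overall architecture (reduction in a weighted space, local Pohozaev identities, a degree argument driven by $(K_2)$) matches the paper's, but the assignment of which mechanism determines which parameter is reversed in a way that breaks the argument. You propose to fix $(\bar r,\bar y'')$ from the reduced energy $F=I(Z+\varphi_k)$ and to fix $(\lambda,\bar h)$ from two Pohozaev identities. The first half fails quantitatively: with the only available estimate $\|\varphi_k\|_*=O(\lambda^{-\frac{2s+1-\beta}{2}-\var})$, the contribution of $\varphi_k$ to $\partial F/\partial\bar r$ and $\partial F/\partial\bar y''$ is $O(k\lambda^{-s-\var})$ (because $\partial_{\bar r}Z,\partial_{\bar y''}Z=O(\lambda Z)$), which \emph{dominates} the main term $2k\,sB_1'\lambda^{-2s}$; see \eqref{1.12}--\eqref{1.13} and the surrounding discussion. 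So no information about $\nabla K(\bar r,\bar y'')$ can be extracted from $\nabla_{\bar r,\bar y''}F=0$. The paper instead uses the local Pohozaev identities \eqref{3.5}--\eqref{3.6} for $(\bar r,\bar y'')$: there the leading balance is $O(k)\cdot\nabla K(\bar r,\bar y'')$ against an $O(k\lambda^{-2s-\var})$ remainder (Lemmas \ref{lem3.2}--\ref{lem3.3}), which is why the degree condition $(K_2)$ can be invoked. Conversely, the parameter $\lambda$ \emph{is} treated variationally, via \eqref{3.7}; this works because $\partial_\lambda Z=O(\lambda^{-\beta}Z)$ is small, so the $\varphi_k$-contribution is $O(k\lambda^{-2s-1-\var})$, subordinate to the main term $k\lambda^{-2s-1}$.

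The second, more structural gap concerns $\bar h$. You treat it as a free unknown to be determined by a Pohozaev-type balancing relation whose ``leading balance forces'' $\sqrt{1-\bar h^2}=1/(M_1\lambda^{\alpha/(n-2s)})$. But imposing any such condition (equivalently $\partial F/\partial\bar h=0$, or the $y_3$-Pohozaev identity) forces $\bar h\to0$, which is incompatible with case (i) where $\bar h\to1$; the paper states this explicitly in the introduction. In the actual construction $\bar h$ is not a reduction parameter at all: the relation $\sqrt{1-\bar h^2}=1/(M_1\lambda^{\alpha/(n-2s)})$ is \emph{prescribed} as part of the ansatz \eqref{1.5}, the constrained space $\mathbb{H}$ contains no kernel in the $y_3$-direction, and the corresponding Lagrange multiplier is eliminated automatically by the evenness in $y_3$ built into $H_s$. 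Without this ``modified'' reduction your finite-dimensional system has one equation too many, and its $\bar h$-equation has no solution in the regime $\bar h\to1$ of Theorem \ref{theo1.1}.
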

		\begin{rem}
			It is remarkable that when $n\geq4$, the hypothesis  \eqref{0} is pivotal in Lemma \ref{lem2.4},  as it ensures the existence of a small positive constant $\nu$. Actucally, it is equivalent to $\frac{n-4s}{n-2s}<2s-\frac{1}{2}$, $\frac{n-4s}{n-2s}<\frac{n-4s}{2}-\frac{s}{n-2s}$ and $n>4s+1$.
		\end{rem}
		\begin{rem}\label{rema1.3}
			Note that the bubble solutions $u_k$ concentrate at two kinds of points $\{x_{j}^{\pm}\}^{k}_{j=1}$ which are axisymmetric on the third coordinate and the number of bubbles can be made arbitrarily large. In particular, when taking $y_0''=0\in \R^{n-3}$, $u_k$ concentrates on a pair of symmetric points with respect to the region.
		\end{rem}
		
		Additionally, as for the case (ii) and case (iii), we assume that $k>0$ is a large integer, $\lam  \in [L'_0k^{\frac{n-2s}{n-4s}},L'_1k^{\frac{n-2s}{n-4s}}]$ for some constants $L'_1>L'_0>0$ and $(\bar{r},\bar{h},\bar{y}'')$ satisfies
		\be \label{1.7}
		|(\bar{r},\bar{y}'')-(r_0,y''_0)|\leq \bar{\theta}_k, \quad \bar{h}=a+\frac{1}{M_2\lam ^{\frac{n-4s}{n-2s}}},
		\ee
		where ${\theta}_k=k^{\frac{(2s-n)(2s+1-\beta')}{2(n-4s)}}$ with $\beta'=\frac{n-2s}{n-4s}$, $a\in[0,1)$ is a constant and $M_2$ is a positive constant.

		\begin{thm}\label{theo1.4}
			Suppose that $n\geq 4$ and $s$ satisfies \eqref{0}. If $V(y)$ and $K(y)$ satisfies $(K_1)$-$(K_3)$, then there exists a positive integer $k_0>0$, such that for any integer $k>k_0$, problem \eqref{1.1} has a solution $u_k$ of the form
			\begin{equation*}
				u_k=Z_{\bar{r},\bar{h},\bar{y}'',\lam _k}+\varphi_k,
			\end{equation*}
			where $\lam _k \in [L'_0k^{\frac{n-2s}{n-4s}},L'_1k^{\frac{n-2s}{n-4s}}]$ and $\varphi_k\in H_s$. Moreover, as $k \to \infty$, $|(\bar{r}_k,\bar{y}''_k)-(r_0,y''_0)|= o(1)$,  $\bar{h}_k=a+\frac{1}{M_2\lam_k ^{\frac{n-4s}{n-2s}}}$ and $\lam _k^{-\frac{n-2s}{2}}\|\varphi_k\|_{L^{\infty}(\Rn)} \to 0$.
		\end{thm}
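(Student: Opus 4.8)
The plan is to carry out a finite-dimensional Lyapunov--Schmidt reduction in the symmetric space $H_s$, followed by an analysis of a reduced functional via local Pohozaev identities, in the same spirit as the proof of Theorem \ref{theo1.1} but with the scaling exponents dictated by \eqref{1.7} rather than \eqref{1.5}. We look for a solution of the form $u_k = Z_{\bar r,\bar h,\bar y'',\lambda} + \varphi$, where $Z_{\bar r,\bar h,\bar y'',\lambda}$ is the glued approximate solution built from the $2k$ bubbles $\eta U_{x_j^{\pm},\lambda}$ introduced above. First I would set up the functional $I(u)=\tfrac12\int_{\Rn}\bigl(|(-\Delta)^{s/2}u|^2+V(y)u^2\bigr)\,\d y-\tfrac{1}{2_s^*}\int_{\Rn}K(y)|u|^{2_s^*}\,\d y$ on $H_s$, restrict to the orthogonality constraints coming from the kernels $Z_0$ and the translation modes $Z_i$ adapted to the concentration set (so that the error $\varphi$ lives in the space orthogonal to the approximate kernel), and solve the infinite-dimensional part. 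The key a priori estimate here is the invertibility of the linearized operator on the orthogonal complement, uniformly in $k$: I would prove this by the standard argument of assuming a sequence of approximate kernels with norm one, rescaling around each bubble, and extracting a contradiction from the nondegeneracy of $U_{x,\lambda}$ (using that the $x_j^{\pm}$ are separated by distances comparable to $2r_0 a$ when $a>0$, or $\sim k^{-1}$ when $a=0$, so the bubbles interact weakly). Then a contraction mapping argument in a weighted $L^\infty$ (or $H_s$) norm produces $\varphi=\varphi_k(\bar r,\bar h,\bar y'',\lambda)$ with the claimed smallness $\lambda_k^{-(n-2s)/2}\|\varphi_k\|_{L^\infty}\to 0$, together with $C^1$ dependence on the parameters.

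Next I would reduce to finding critical points of $\mathcal{F}(\bar r,\bar h,\bar y'',\lambda):=I(Z_{\bar r,\bar h,\bar y'',\lambda}+\varphi_k)$ over the parameter region \eqref{1.7}. The core computation is the energy expansion: one shows
\begin{equation*}
\mathcal{F}= k\Bigl( A_0 + \frac{A_1 V(\bar r,\bar y'')}{\lambda^{2s}} - \frac{A_2 \bigl(K(\bar r,\bar y'')-1\bigr)}{\lambda^{2s}} - B\,\frac{(k/\lambda)^{\,?}}{\cdots} + \cdots \Bigr) + \text{l.o.t.},
\end{equation*}
where the interaction term between neighbouring bubbles (and between the top and bottom families, whose separation is governed by $\bar h$) supplies the $\bar h$-dependence, and the potential terms supply the $(\bar r,\bar y'')$-dependence; the precise powers of $k,\lambda,\bar h$ are exactly the ones forced by the choices in \eqref{1.7} and by the restriction \eqref{0} on $s$ (which, as noted in the remark, guarantees the balancing exponents are in the admissible range and a small $\nu$ exists — here entering through the analogous balancing with $a$ and $M_2$). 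From this expansion the $\lambda$-equation and the $\bar h$-equation determine $\lambda_k\in[L_0'k^{(n-2s)/(n-4s)},L_1'k^{(n-2s)/(n-4s)}]$ and $\bar h_k = a + (M_2\lambda_k^{(n-4s)/(n-2s)})^{-1}$ by a direct minimization/fixed-point argument, while the $(\bar r,\bar y'')$-equation is closed using condition $(K_2)$: because $\deg(\nabla K(\cdot,\cdot),(r_0,y_0''))\ne 0$, a degree-theoretic argument on the reduced map in the $(\bar r,\bar y'')$-variables yields a critical point near $(r_0,y_0'')$ within the ball of radius $\bar\theta_k$.

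To make the reduced equations rigorous without differentiating the error $\varphi_k$ in the parameters (which is delicate), I would instead use \emph{local Pohozaev identities}: multiplying \eqref{1.1} by suitable cut-offs of $\partial_\lambda u_k$, $\partial_{\bar h}u_k$, and $\nabla_{(\bar r,\bar y'')}u_k$ and integrating over a fixed ball around one bubble $x_1^{\pm}$ converts the criticality conditions into identities involving only $Z_{\bar r,\bar h,\bar y'',\lambda}$ and $\varphi_k$ through controlled boundary and remainder terms; this is where the nonlocality of $(-\Delta)^s$ forces extra care, since the Pohozaev identity for the fractional Laplacian comes with tail contributions that must be estimated via the decay of $U_{x,\lambda}$ and the orthogonality of $\varphi_k$. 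I expect the main obstacle to be precisely the sharp estimate of the bubble--bubble interaction as $\bar h \to 0$ (case (iii)): when $a=0$ the top and bottom families collapse onto the same circle $\S_0^1$ and the mutual distance between $x_j^+$ and $x_j^-$ can be comparable to the intra-family spacing $\sim r_0/k$, so the interaction energy is no longer a clean sum of nearest-neighbour terms and one must control the full double sum $\sum_{i\ne j}\int U_{x_i^{\pm},\lambda}^{2_s^*-1}U_{x_j^{\mp},\lambda}$ with the correct leading order; handling this uniformly in $a\in[0,1)$ is the technical heart of the argument, and it is what pins down the exponent $\beta'=\tfrac{n-2s}{n-4s}$ in $\bar\theta_k$ and the admissible range \eqref{0} of $s$.
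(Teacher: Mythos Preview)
Your overall reduction scheme is correct and matches the paper: linearize, prove uniform invertibility by the rescaling/nondegeneracy contradiction, solve for $\varphi$ by contraction in weighted $L^\infty$, then close the finite-dimensional problem by a degree argument on $\nabla K$ using $(K_2)$. But there is a genuine gap in how you handle the parameter $\bar h$.

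You propose to determine $\bar h_k$ from a ``$\bar h$-equation'' arising from the reduced functional, and you plan a Pohozaev identity obtained by testing against $\partial_{\bar h}u_k$. The paper explicitly explains why this fails: imposing $\partial F/\partial\bar h=0$ (or the equivalent Pohozaev identity in the $y_3$-direction) forces $\bar h\to 0$ as $k\to\infty$, which is incompatible with case~(ii) (and case~(i)). The way the paper circumvents this is structural, not analytic: the space $H_s$ consists of functions that are \emph{even in $y_3$}, so the would-be kernel direction associated with moving the upper and lower families apart is odd in $y_3$ and is automatically orthogonal to everything in $H_s$. Consequently there is \emph{no} Lagrange multiplier $c_l$ attached to $\bar h$ in \eqref{2.21}, no $\partial_{\bar h}$-constraint in the definition of $\mathbb H$, and hence no $\bar h$-equation to solve. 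The relation $\bar h=a+(M_2\lambda^{(n-4s)/(n-2s)})^{-1}$ in \eqref{1.7} is a \emph{prescription} slaving $\bar h$ to $\lambda$, not the output of a variational condition; the constant $M_2$ is fixed, not solved for. Without using the $y_3$-evenness in this way your reduction has one equation too many and, as the paper notes, that extra equation has no admissible solution.

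A smaller point: the Pohozaev identities in the paper are written on a fixed ball $B_\rho$ around $(r_0,y_0'')$ containing \emph{all} bubbles (they cluster there), via the Caffarelli--Silvestre extension on a half-ball $\mathcal B_\rho^+$, and they are used only to produce the $(\bar r,\bar y'')$-equations \eqref{3.22}--\eqref{3.21}; the $\lambda$-equation comes directly from $\bigl\langle I'(u_k),\partial_\lambda Z_{\bar r,\bar h,\bar y'',\lambda}\bigr\rangle=0$ (Lemma~\ref{lemB.2}), not from a Pohozaev identity. Your plan to integrate around a single bubble and to use $\partial_\lambda u_k$, $\partial_{\bar h}u_k$ as multipliers does not match this structure; in particular the $\partial_{\bar h}$-multiplier should be dropped entirely.
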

		
		\begin{rem}\label{rema1.4}
			We would like to point out that since there involves $\bar{h}$ in the points $\{x_{j}^{\pm}\}_{j=1}^{k}$ which yield that $\frac{\pa }{\pa  \lam }U_{x_{j}^{\pm},\lam }=O\big(\frac{1}{\lam ^{\beta}}U_{x_{j}^{\pm},\lam })$ not $\frac{\pa }{\pa  \lam }U_{x_{j}^{\pm},\lam }=O\big(\frac{1}{\lam }U_{x_{j}^{\pm},\lam })$, where $\beta$ equals to $\frac{\al }{n-2s}$ and $\frac{n-4s}{n-2s}$ in Theorem \ref{theo1.1} and Theorem \ref{theo1.4}, respectively,
			to obtain the good enough estimates for the high order terms, we have to deal with all the estimates more carefully, and we have to improve the accuracy of the estimates for the error term $\varphi$ (see Proposition \ref{prop2.5}).
		\end{rem}
		
		\begin{rem}\label{rema1.5}
			We would like to point out that the solutions we obtained are different from those obtained in \cite{guo2020solutions,guo2013infinitely}.
			Meanwhile, they are also different from the ones obtained in \cite{duan2023doubling} where they only consider the case that $\bar{h}$ goes to $0$.
		\end{rem}
		Similar to \cite{DHWW2023}, we can construct $2k$-pieces of infinitely many-bubble
		solutions symmetric with respect to the third coordinate of problem \eqref{1.1}.
		\begin{cor}\label{cor1.1}
			Under the assumptions stated in Theorem \ref{theo1.4}, for any fixed integer $l \in \mathbb{N}^{+}$, there exists an integer $k_0>0$, such that for all integers $k \geq k_0$, the equation \eqref{1.1} has infinitely many solutions symmetric with respect to the $y_3$-coordinate, which can be expressed as
			$$
			u_{k, l}=\sum_{i=1}^l \sum_{j=1}^k \eta U_{x_{i, j}^{+}, \lam  }+\sum_{i=1}^l \sum_{j=1}^k \eta U_{x_{i, j}^{-}, \lam  }+\varphi_{k, l},
			$$
			\noindent where $\varphi_{k, l} \in H_s$ and for $j=1, \ldots, k$,
			$$
			x_{j, i}^{ \pm}=\Big(\bar{r} \sqrt{1-\bar{h}_j^2} \cos \frac{2(i-1) \pi}{k}, \bar{r} \sqrt{1-\bar{h}_j^2} \sin \frac{2(i-1) \pi}{k}, \pm \bar{r} \bar{h}_j, \bar{y}''\Big), \quad i=1, \ldots, l,
			$$
			with $\bar{h}_j-\bar{h}_{j-1} \in(\frac{1}{M_2 \lam  ^{\frac{n-4 s}{n-2s}}}, a+\vartheta'')$, where $\vartheta''>0$ is a small constant, $\bar{h}_0=0$ and $(\bar{r}, \bar{y}'')$ is close to $(r_0, y_0'')$.
		\end{cor}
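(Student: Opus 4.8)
The plan is to run the same modified finite\-dimensional reduction used for Theorem \ref{theo1.4}, but with the single pair of circles of bubbles replaced by $l$ stacked pairs, and to check that the extra bubble\-bubble interactions created by the new layers are strictly lower order than the terms carrying $\nabla V(r_0,y_0'')$, $\nabla^2K(r_0,y_0'')$ and the interaction energy that pins $\lambda$ and the heights. Concretely, I would work in the subspace of $H_s$ of functions that are even in $y_2,y_3$ and $\mathbb{Z}_k$\-invariant in the $(y_1,y_2)$\-plane (already built into the definition of $H_s$), so that the approximate solution $W_{k,l}=\sum_{i=1}^{l}\sum_{j=1}^{k}\big(\eta U_{x_{i,j}^+,\lam}+\eta U_{x_{i,j}^-,\lam}\big)$ lies in it and the configuration reduces to the finitely many parameters $(\bar r,\bar h_1,\dots,\bar h_l,\bar y'',\lam)$. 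The admissible set is the one prescribed in the statement: $(\bar r,\bar y'')$ near $(r_0,y_0'')$, $\lam\in[L_0'k^{\frac{n-2s}{n-4s}},L_1'k^{\frac{n-2s}{n-4s}}]$, and $\bar h_j-\bar h_{j-1}\in\big(\frac{1}{M_2\lam^{(n-4s)/(n-2s)}},\,a+\vartheta''\big)$ with $\bar h_0=0$, so that within each circle consecutive bubbles sit at mutual distance comparable to $2\bar r\sqrt{1-\bar h_i^2}\sin(\pi/k)$ and distinct circles are separated, at worst, by a constant times $\frac{1}{M_2\lam^{(n-4s)/(n-2s)}}$.

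First I would carry out the linear theory and the fixed\-point step. Because the number of layers $l$ is fixed and the bubbles remain mutually well separated in the relevant scale, the invertibility of the linearized operator on the orthogonal complement of the span of the approximate kernels $\eta Z_0,\eta Z_1,\dots,\eta Z_n$ attached to each $x_{i,j}^\pm$ carries over verbatim from the one\-layer analysis; only the number of kernels grows, by the fixed factor $l$. Consequently, exactly as in Proposition \ref{prop2.5}, one solves the projected equation for $\varphi_{k,l}\in H_s$ orthogonal to these kernels, with the same quantitative bound giving $\lam^{-\frac{n-2s}{2}}\|\varphi_{k,l}\|_{L^\infty(\Rn)}\to0$ as $k\to\infty$. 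As stressed in Remark \ref{rema1.4}, here it is essential that $\pa_\lam U_{x_{i,j}^\pm,\lam}=O\big(\lam^{-\frac{n-4s}{n-2s}}U_{x_{i,j}^\pm,\lam}\big)$ rather than $O(\lam^{-1}U_{x_{i,j}^\pm,\lam})$, which is precisely the reason the sharpened error estimates of the single\-layer case are needed; the constraint \eqref{0} on $s$ is used exactly as in Lemma \ref{lem2.4} to guarantee the relevant small exponent $\nu$.

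Next I would set up and solve the reduced finite\-dimensional problem. Plugging $u=W_{k,l}+\varphi_{k,l}$ into the energy, the leading part of the reduced functional splits as $l$ copies of the one\-circle reduced energy of Theorem \ref{theo1.4} (one per value of $\bar h_i$) plus interaction contributions between the $2l$ circles, each controlled by integrals of the form $\int_{\Rn}U_{x_{i,j}^\pm,\lam}^{2_s^*-1}U_{x_{i',j'}^{\pm'},\lam}$ over bubbles lying on different circles or on the two members of a symmetric pair. Since those bubbles are separated by a distance of order $1$ in the $\bar h$\-variable (except the nearest\-neighbor circle gaps, which are still $\gg\lam^{-1}$), these terms are $o$ of the potential\-driven terms and of the self\-interaction terms that fix $\lam$ and the heights; hence the critical\-point structure is a genuine perturbation of $l$ essentially decoupled copies of the single\-layer system, coupled only through a nearest\-neighbor ``chain'' in the $\bar h_j$\-equations. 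I would then solve, exactly as for Theorem \ref{theo1.4}, the system consisting of the orthogonality identities in $\lam$ and the translation directions together with the local Pohozaev identities in $(\bar r,\bar y'')$ and in each $\bar h_j$: the $(\bar r,\bar y'')$\-equations by a Brouwer degree argument using $(K_2)$ and $\deg(\nabla K,(r_0,y_0''))\neq0$, the $\lam$\-equation by the balance involving $V(r_0,y_0'')>0$, and the $\bar h_j$\-equations by the same balance that pins $\bar h$ in Theorem \ref{theo1.4}, now resolved layer by layer along the chain. Topological degree on the product domain yields a critical point, hence a genuine solution $u_{k,l}$; letting $k\to\infty$ produces, for each fixed $l$, infinitely many distinct solutions of the stated form, with $|(\bar r_k,\bar y_k'')-(r_0,y_0'')|=o(1)$ inherited from the estimates above.

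The main obstacle is the third step: verifying that every newly created inter\-layer interaction term is strictly subordinate to the potential\-driven and self\-interaction terms, most delicately (i) the interaction between the two members $x_{i,j}^+$ and $x_{i,j}^-$ of a symmetric pair when some $\bar h_j$ is small, and (ii) the interaction between consecutive circles whose gap is only slightly larger than $\lam^{-(n-4s)/(n-2s)}$; this is exactly where the window for $\bar h_j-\bar h_{j-1}$ in \eqref{1.7}\-type form and the restriction \eqref{0} on $s$ enter, ensuring the chain system for the $\bar h_j$ is solvable and the reduced map is a small perturbation. Once this subordination is established, the degree\/fixed\-point argument closing the proof is routine.
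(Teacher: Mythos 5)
Your overall strategy coincides with what the paper intends: the paper gives no argument for Corollary \ref{cor1.1} beyond ``can be proved similarly to Theorem \ref{theo1.4}'', and your plan of stacking $l$ pairs of circles, rerunning the reduction with a fixed multiple of the kernels, checking that inter-layer interactions are subordinate, and closing with a degree argument is the right skeleton. However, there is one genuine misstep in how you treat the height parameters. You propose to determine each $\bar h_j$ by imposing the corresponding Pohozaev/orthogonality identity and solving a nearest-neighbour chain of ``$\bar h_j$-equations by the same balance that pins $\bar h$ in Theorem \ref{theo1.4}''. But no such balance exists in Theorem \ref{theo1.4}: the paper's \emph{modified} reduction explicitly refuses to impose $\frac{\partial F}{\partial \bar h}=0$ (or its Pohozaev equivalent), because that condition forces $\bar h\to 0$ and is incompatible with cases (i) and (ii). Instead $\bar h$ is \emph{prescribed} as a function of $\lambda$ via \eqref{1.7}, the evenness in $y_3$ built into $H_s$ removes the need for the $y_3$/height kernel direction, and only $(\bar r,\bar y'',\lambda)$ are solved for through \eqref{4.3}--\eqref{4.5}. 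In the corollary the $\bar h_j$ are likewise fixed in advance anywhere in the stated window (the lower bound $\frac{1}{M_2\lambda^{(n-4s)/(n-2s)}}$ on the gaps guarantees the separation needed for the interaction estimates, the upper bound keeps the configuration inside the region where the cutoff and expansions are valid); they are not unknowns of the reduced system. A chain of $\bar h_j$-equations would reintroduce exactly the obstruction the modified reduction was designed to avoid.

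Relatedly, your list of approximate kernels ``$\eta Z_0,\eta Z_1,\dots,\eta Z_n$ attached to each $x_{i,j}^\pm$'' is too large: the constrained space $\mathbb{H}$ in the paper contains only the $n-1$ directions $Z_{j,2}^\pm,\dots,Z_{j,n}^\pm$ corresponding to $\lambda$, $\bar r$ and $\bar y''_4,\dots,\bar y''_n$; the rotational and $y_3$-translation directions are eliminated by the symmetries of $H_s$, not by orthogonality constraints. Once you replace the $\bar h_j$-equations by the prescription of the $\bar h_j$ within the window and keep the kernel set as in $\mathbb{H}$, the rest of your argument (linear theory with $l$ fixed, subordination of the inter-layer interactions using Lemma \ref{lemA.4}-type sums with the gap lower bound, and the degree argument in $(t,\bar r,\bar y'')$ using $(K_2)$ and $V(r_0,y_0'')>0$) matches the intended proof.
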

		Now, we outline the main ideas in the proofs of Theorems \ref{theo1.1} and \ref{theo1.4}, and discuss the primary difficulties encountered in proving the desired results.
		
		The proofs of Theorems \ref{theo1.1} and \ref{theo1.4} are based on a modified finite-dimensional Lyapunov-Schmidt reduction and local
		Pohozaev-type identities. The finite-dimensional reduction method has been extensively used to construct solutions for equations with critical growth, we refer to \cite{BC1991,CNY2002,guo2013infinitely,guo2016infinitely,GLN2019,L1996,wei2010infinitely,Y2000,SZ1996} and the references therein. We carry out the reduction procedure in a space with weighted maximum norm, similar weighted maximum norm has been used in \cite{del2003two,wei2005arbitrary,guo2020solutions,peng2019constructing,PWY2018,wei2010infinitely}. We initially define an approximation and seek a solution within a nearby neighborhood. This is achieved by linearizing around the approximation and, subsequent to developing an appropriate linear theory, solving through a fixed point argument. This allows to reduce the original problem to the solvability of a finite dimensional one. Then we turn to find suitable $(\bar{r},\bar{h}, \bar{y}'',\lam  )$. However, for the construction in Theorem \ref{theo1.1} (or Theorem \ref{theo1.4}), this final step is quite delicate. In fact, We know that the functional corresponding to equation \eqref{1.1} is
		\be \label{1.10}
		I(u)=\frac{1}{2} \int_{\Rn} \big(|(-\Delta)^{\frac{s}{2}} u|^2+V(y)u^2 \big)\,\d y-\frac{1}{2_s^*} \int_{\Rn}  K(y)(u)_+^{2_s^*}\,\d y .
		\ee
		By the reduction argument, the problem of finding a critical point for $I(u)$ with the form \eqref{1.6} can be reduced to that of finding a critical point of the following perturbed function
		\begin{equation*}
			F(\bar{r},\bar{h},\bar{y}'',\lam )=I(Z_{\bar{r},\bar{h},\bar{y}'',\lam }+\varphi_{\bar{r},\bar{h},\bar{y}'',\lam }),
		\end{equation*}
		where $\bar{r},\bar{h},\bar{y}''$ and $\lam $ satisfy those conditions in Theorem \ref{theo1.1} or Theorem \ref{theo1.4}, respectively. To find the critical point, it is essential to obtain a good estimate for the error term. As in\cite{DHWW2023}, we  will carry out the reduction argument in a weighted space and obtain the following estimate for the error term $\varphi$
		$$	\|\varphi\|_{*} =O\Big( \frac{1}{\lam ^{\frac{2s+1-\beta}{2}+\var }}\Big),$$
		where $\var>0$ is a small sonstant, $\beta$ equals to $\frac{\al }{n-2s}$ and $\frac{n-4s}{n-2s}$ in Theorem \ref{theo1.1} and Theorem \ref{theo1.4}, respectively. Using the facts $\frac{\pa }{\pa  \lam }U_{x_{j}^{\pm},\lam }=O\big(\frac{1}{\lam ^{\beta}}U_{x_{j}^{\pm},\lam })$, $\frac{\pa }{\pa  y_i }U_{x_{j}^{\pm},\lam }=O\big(\lam   U_{x_{j}^{\pm},\lam })$, we have the following estimates (see Lemma \ref{lemB.2}, \eqref{r} and \eqref{y})
		\be\label{1.11}
		\frac{\pa F}{\pa\lam  }=\!2k\Big(\!-\!\frac{sB_1}{\lam ^{2s+1}}\!+\!\sum_{j=2}^{k}\frac{B_2}{\lam ^{n-2s+1}|x_j^+-x_1^+|^{n-2s}}
		\!+\!\sum_{j=1}^{k}\frac{B_2}{\lam ^{n-2s+1}|x_j^--x_1^+|^{n-2s}}\!+\!O\Big(\frac{1}{\lam ^{2s+1+\var }}\Big)\Big),\ee
		\be\label{1.12}
		\frac{\pa F}{\pa\bar{r}} =2k\Big(\frac{sB'_1}{\lam ^{2s}}+\sum_{j=2}^{k}\frac{B'_2}{\bar{r}\lam ^{n-2s}|x_j^+-x_1^+|^{n-2s}}
		+\sum_{j=1}^{k}\frac{B'_2}{\bar{r}\lam ^{n-2s}|x_j^--x_1^+|^{n-2s}}+O\Big(\frac{1}{\lam ^{s+\var }}\Big)\Big),
		\ee
		\be\label{1.13}\frac{\pa F}{\pa\bar{y}''_j}=2k\Big(\frac{sB'_1}{\lam ^{2s}}+O\Big(\frac{1}{\lam ^{s+\var }}\Big)\Big),
		\ee
		where $B_i$ and $B_i^{'}$ are some positive constants. From the above estimates, we deduce that the error term $\varphi$ does not destroy the main terms in the expansion of \eqref{1.11}. But it destroys the main terms in the expansions of \eqref{1.12} and \eqref{1.13}.
		This makes it impossible to identify a critical point for $F(\bar{r},\bar{h},\bar{y}'',\lam )$. To overcome this difficulty, motivated by \cite{PWY2018}, we will choose appropriate $(\bar{r},\bar{h}, \bar{y}'')$ in a suitable neighborhood $B_\rho$ of $(r_0, y_0'')$  satisfying the following local Pohozaev identities:
		\begin{align*}
			&-\int_{\pa''\mathcal{B}_\rho^+} t^{1-2s}\frac{\pa\tilde{u}_k}{\pa \nu}\frac{\pa\tilde{u}_k}{\pa y_i}+\frac{1}{2}\int_{\pa''\mathcal{B}_\rho^+} t^{1-2s}|\nabla\tilde{u}_k|^2\nu_i
			\\=&\int_{B_\rho}\Big(-V(r,y'')u_k+K(r,y'')(u_k)_+^{2_s^*-1}\Big)\frac{\pa u_k}{\pa y_i},\quad i=4,\ldots,n,
		\end{align*}
		and
		\begin{align*}	
			&-\int_{\pa''\mathcal{B}_\rho^+} t^{1-2s}\langle\nabla\tilde{u}_k,Y\rangle\frac{\pa\tilde{u}_k}{\pa \nu}+\frac{1}{2}\int_{\pa''\mathcal{B}_\rho^+}t^{1-2s}|\nabla\tilde{u}_k|^2\langle Y,\nu\rangle +\frac{2s-n}{2}\int_{\pa\mathcal{B}_\rho^+}t^{1-2s}\frac{\pa\tilde{u}_k}{\pa \nu}\tilde{u}_k\\
			=&\int_{{B_\rho}}\Big(-V(r,y'')u_k+K(r,y'')(u_k)_+^{2_s^*-1}\Big)\langle \nabla u_k,y\rangle,
		\end{align*}
		where $u=Z_{\bar{r},\bar{h}, \bar{y}'', \lam }+\varphi_{\bar{r}, \bar{y}'', \lam }$ is the function obtained by the reduction argument, $\nu$ is the unit outer normal of $\pa\mathcal{B}_\rho^+$, $\nu_i$ is the $i$-th component of $\nu$, $\tilde{u}$ is the extension of $u$ (see  \eqref{3.A} below) and
		\begin{equation*}
			\begin{aligned}
				B_\rho&=\{y\in\Rn:|y-(r_0,y_0'')|\leq\rho\},\\
				\mathcal{B}_\rho^+&=\{Y=(y,t)\subset \R_+^{n+1}:|(y,t)-(r_0,y_0'',0)|\leq\rho,t>0\},\\
				\partial'\mathcal{B}_\rho^+&=\{Y=(y,t)\subset \Rn\times\{0\}:|y-(r_0,y_0'')|\leq\rho,t=0\},\\
				\partial''\mathcal{B}_\rho^+&=\{Y=(y,t)\subset \R_+^{n+1}:|(y,t)-(r_0,y_0'',0)|=\rho,t>0\}.
			\end{aligned}
		\end{equation*}
		Then $\partial \mathcal{B}_\rho^+=\partial'\mathcal{B}_\rho^+\cup\partial''\mathcal{B}_\rho^+$.
		
		For any $u \in D$, $\tilde{u}$ is defined by
		\begin{equation}\label{3.A}
			\tilde{u}(y,t)=\mathcal{P}_s[u]=\int_{\Rn}P_s(y-\xi,t)u(\xi)\,\d\xi, \quad (y,t)\in \R_+^{n+1}=\Rn\times(0,\infty),
		\end{equation}
		where
		\begin{equation*}
			P_s(x,t)=\beta(n,s)\frac{t^{2s}}{(|x|^2+t^2)^{\frac{n+2s}{2}}}
		\end{equation*}
		with the constant $\beta(n,s)$ satisfying $\int_{\Rn}P_s(x,1)\,\d x=1$. Thus $\tilde{u} \in L^{2}(t^{1-2s},\Omega)$ for any compact set $\Omega$ in $\overline{\R_+^{n+1}}$, $\nabla\tilde{u} \in L^2(t^{1-2s},\R_+^{n+1})$ and $\tilde{u} \in C^\infty(\R_+^{n+1})$. Moreover, $\tilde{u}$ satisfies (see \cite{CS2007})
		\begin{equation*}
			\left\{
			\begin{aligned}
				&\text{ div }(t^{1-2s}\nabla\tilde{u})=0 &&\quad \text{ in } \R_+^{n+1},\\
				&\tilde{u}(y,0)=u(y)&&\quad \text{ on }\Rn,\\
				&-\lim\limits_{t \to 0}t^{1-2s}\pa_t\tilde{u}(y,t)=\omega_s(-\Delta)^s u(y) &&\quad \text{ on }\Rn,\\
			\end{aligned}	\right.
		\end{equation*}
		where $\omega_s=2^{1-2s}\Gamma(1-s)/\Gamma(s)$. Then all $c_l$ $(l=2,\ldots,n)$ in \eqref{2.21} are equal to zero, which implies that $u=Z_{\bar{r},\bar{h}, \bar{y}'', \lam }+\varphi_{\bar{r}, \bar{y}'', \lam }$ is the solution of equation \eqref{1.1}.
		
		We would like to emphasize that we obtain $(\bar{r},\bar{h},\bar{y}'')$ through the reduction argument, instead of determining these parameters directly by computing the derivatives of the reduced function $F(\bar{r},\bar{h},\bar{y}'',\lam )$ with respect to $\bar{r},\bar{h}$, and $\bar{y}_k''$ for $k=4,\ldots,n$. In fact, we cannot impose the condition $\frac{\partial F}{\partial \bar{h}}=0$ or the equivalent Pohozaev identity, as it leads to $\bar{h}$ approaching 0, which contradicts case (i) and case (ii) when $k$ approaches infinity. To overcome this difficulty, we necessitate that the bubbling solutions are symmetric about the third coordinate axis. Subsequently, we can alleviate the restriction on the decay rate of $\bar{h}$ by employing a modified reduction method.

		The primary objective of this paper is to construct new types of bubble solutions analogous to those presented in \cite{duan2023doubling}, where they constructed $2k$-bubble solutions that concentrate at points lying on the upper and lower circles of a cylinder, under the assumption that $K(y)$ is a radially symmetric function. To release the assumptions on the function $K(y)$, inspired by \cite{PWY2018,guo2020solutions}, we attempt to construct solutions by utilizing various local Pohozaev identities to derive algebraic equations that determine the location of the bubbles. Therefore, we can construct $2k$-bubble solutions that are symmetric with respect to the third coordinate and concentrate at even the saddle points of $K(y)$, which possess a weaker sense of symmetry. However, in the present paper, due to the possibility that the bubbles of our solutions may be extremely close when $\bar{h}$ approaches 0 or 1, we are required to perform more intricate computations and obtain more precise estimates. We will discuss this in more details later. On the other hand, owing to the apperance of double potential, we must analyze their roles in our problem. Indeed, the condition $V(r_0, y''_0)>0$ guarantees the main terms in the expansion of the energy function as stated in Lemma \ref{lemB.1}, while the conditions of $K$ are employed for the ultimate proofs of Theorems \ref{theo1.1} and  \ref{theo1.4}.
		
		The present paper is organized as the following. In  Section \ref{sec:2}, we carry out a finite-dimensional
		reduction procedure. Then the expansions of the energy for the approximate solutions are proved
		in Section \ref{sec:3}. Finally, we prove the main results in Section \ref{sec:4} by using local Pohozaev identities. We put some basic estimates in the
		Appendix \ref{secA}. 
		
		{\bf Notations:} We collect below a list of the main notations used throughout the paper.
		\begin{itemize}
			\item The notation $o(1)$ represents some quantity that converges to zero as $k$ approaches infinity.
			\item We will write that $a \lesssim b$ (resp. $a \gtrsim b$ ) if $a \leq Cb$ (resp. $Ca \geq b$), where $C$ is a constant
			depends only on $n$, $s$ and others but independent on the number of bubbles $k$.
			\item The integral $\int$ always means $\int_{\Rn}$ unless specified.
			\item The standard scalar product in $\Rn$ is denoted by $\langle\cdot,\cdot\rangle_{\Rn}$.
		\end{itemize}
		
		\section{The Finite-Dimensional Reduction}\label{sec:2}
		
		In this section, we perform a finite-dimensional reduction by using $Z_{\bar{r},\bar{h},\bar{y}'',\lam }$ as an approximation solution. For later calculations, we divide $\Rn$ into $k$ parts, for $j=1,\ldots,k$,
		\begin{align}
			\Omega_j =\Big\{&y=(y_1,y_2,y_3,y'')\in \R^3\times\R^{n-3}: \notag\\
			&\Big\langle \frac{(y_1,y_2)}{|(y_1,y_2)|},\Big(\cos\frac{2(j-1)\pi}{k},\sin\frac{2(j-1)\pi}{k}\Big)\Big\rangle_{\R^2}\geq\cos\frac{\pi}{k}\Big\},\label{2.1}
		\end{align}
		where $\langle\cdot,\cdot\rangle_{\R^2}$ denote the dot product in $\R^2$. For $\Omega_j$, we further divide it into two separate parts:
		\be\label{2.2}\Omega_j^+=\big\{y:y=(y_1,y_2,y_3,y'') \in \Omega_j, y_3 \geq 0\big\},\ee
		$$\Omega_j^-=\big\{y:y=(y_1,y_2,y_3,y'') \in \Omega_j, y_3 < 0\big\}.$$
		It is easy to verify that
		\begin{equation*}
			\Rn=\bigcup\limits_{j=1}^{k}\Omega_j,\quad \Omega_j=\Omega_j^+\cup\Omega_j^-
		\end{equation*}
		and
		\begin{equation*}
			\Omega_j \cap \Omega_i=\emptyset, \quad \Omega^+_j \cap \Omega^-_j=\emptyset, \quad \text{if}~i\neq j.
		\end{equation*}
		
		For $u,f\in L^\infty(\Rn)$, let
		$$\|u\|_*=\sup_{y\in\Rn}\Big(\sum_{j=1}^{k}\Big(\frac{1}{(1+\lam |y-x_j^+|)^{\frac{n-2s}{2}+\tau}}+\frac{1}{(1+\lam |y-x_j^-|)^{\frac{n-2s}{2}+\tau}}\Big)\Big)^{-1}\lam ^{-\frac{n-2s}{2}}|u(y)|$$
		and
		$$\|f\|_{**}=\sup_{y\in\Rn}\Big(\sum_{j=1}^{k}\Big(\frac{1}{(1+\lam |y-x_j^+|)^{\frac{n+2s}{2}+\tau}}+\frac{1}{(1+\lam |y-x_j^-|)^{\frac{n+2s}{2}+\tau}}\Big)\Big)^{-1}\lam ^{-\frac{n+2s}{2}}|f(y)|,$$
		where $\tau=\frac{n-4s-\al }{2(n-2s-\al)}<\frac{n-4s}{2(n-2s)}$. 
		
		Note that the maximum norm will not be affected by the number of the bubbles, we need to carry out the reduction procedure in a space with weighted maximum norm, similar weighted maximum norm has been used in \cite{del2003two,wei2005arbitrary,guo2020solutions,peng2019constructing,PWY2018,wei2010infinitely}.

		For $j=1,2,\ldots,k$, denote
		$$Z_{j,2}^\pm=\frac{\pa  Z_{x_j^\pm,\lam }}{\pa \lam },\quad Z_{j,3}^\pm=\frac{\pa  Z_{x_j^\pm,\lam }}{\pa  \bar{r}},\quad Z_{j,l}^\pm=\frac{\pa  Z_{x_j^\pm,\lam }}{\pa  \bar{y}''_l},\quad l=4,\ldots,n.$$
		
		We also define the constrained space
		\begin{align*}
			\mathbb{H}=\Big\{v:v\in H_s,\int Z_{x_j^+,\lam }^{2_s^*-2}Z_{j,l}^+ v=\int Z_{x_j^-,\lam }^{2_s^*-2}Z_{j,l}^- v=0,~j=1,\ldots,k,~l=2,\ldots,n\Big\}.
		\end{align*}
		
		Now, we consider the following linearized problem in $ \mathbb{H}$: 
		\begin{align}
			&(-\Delta)^s\varphi+V(r,y'')\varphi-(2_s^*-1)K(r,y'')Z_{\bar{r},\bar{h},\bar{y}'',\lam }^{2_s^*-2}\varphi\notag\\=&f+\sum_{l=2}^n
			c_l\sum_{j=1}^k\Big(Z_{x_j^+,\lam }^{2_s^*-2}Z_{j,l}^+ +Z_{x_j^-,\lam }^{2_s^*-2}Z_{j,l}^-\Big)\quad\text{ in }\ \Rn,\label{a.2.1} 
		\end{align}
		where $c_l $ $(l=2,\ldots,n)$ are some real numbers.
		
		In the sequel of this section, we assume that $(\bar{r},\bar{y}'')$ and $\bar{h}$ satisfy \eqref{1.5}.
		
		\begin{lem}\label{lem2.1}
			Suppose that 
			$\varphi_k$ solves \eqref{a.2.1} for $f=f_k$. If $\|f_k\|_{**}$ goes to zero as $k$ goes to infinity, so does $\|\varphi_k\|_*$.
		\end{lem}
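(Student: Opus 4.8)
# Proof Proposal for Lemma \ref{lem2.1}

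\textbf{Overall strategy.} The plan is to argue by contradiction in the standard fashion for such a priori estimates in Lyapunov-Schmidt schemes. Suppose the conclusion fails: then there is a sequence $k \to \infty$, functions $\varphi_k$ solving \eqref{a.2.1} with right-hand side $f_k$ and multipliers $c_l = c_{l,k}$, such that $\|f_k\|_{**} \to 0$ but $\|\varphi_k\|_* \geq \delta > 0$ for some fixed $\delta$. By the linearity of the problem we may normalize so that $\|\varphi_k\|_* = 1$ for all $k$. The goal is then to show $\|\varphi_k\|_* \to 0$, contradicting the normalization. The heart of the matter is a pointwise estimate: I would use the representation of $\varphi_k$ via the Green's function (equivalently the Riesz kernel / Poisson extension) of $(-\Delta)^s + V$, together with the explicit polynomial decay of $Z_{x_j^\pm,\lam}^{2_s^*-2}$ around each bubble point, to bound $\varphi_k$ in the weighted norm by the $\|\cdot\|_{**}$-norm of the full right-hand side plus a term with a small coefficient coming from the nonlocal interaction of the bubbles.

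\textbf{Key steps in order.}
\emph{Step 1 (Setup and normalization).} Assume by contradiction $\|\varphi_k\|_* = 1$, $\|f_k\|_{**} \to 0$. First estimate the multipliers $c_{l,k}$: pairing \eqref{a.2.1} with $Z_{j,l}^\pm$ and using the orthogonality conditions defining $\mathbb H$, the almost-orthogonality of the $Z_{j,l}^\pm$ across different $j$ and $l$, and the standard estimates $\int Z_{x_j^\pm,\lam}^{2_s^*-2} Z_{j,l}^\pm Z_{j,m}^\pm \approx \lam^{\gamma_{lm}}\,\delta_{lm}$ (with appropriate powers), one solves the resulting near-diagonal linear system to get $|c_{l,k}| \lesssim \|f_k\|_{**} + o(1)\|\varphi_k\|_* $, so in particular the $c_{l,k}$-contribution is controlled by the other terms.

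\emph{Step 2 (Pointwise estimate via the kernel).} Write $\varphi_k$ using the integral operator inverting $(-\Delta)^s + V$ — or, working with the extension $\tilde\varphi_k$ in $\R_+^{n+1}$ as set up later in the paper, use the degenerate-elliptic Green's function and maximum principle. Apply the operator to both sides of \eqref{a.2.1}, treating $(2_s^*-1)K Z_{\bar r,\bar h,\bar y'',\lam}^{2_s^*-2}\varphi_k$ as part of the source. The key computational lemma (of the type in Appendix \ref{secA}) is the convolution estimate
\[
\int_{\Rn} \frac{1}{|y-z|^{n-2s}} \sum_j \frac{\lam^{\frac{n+2s}{2}}}{(1+\lam|z-x_j^\pm|)^{\frac{n+2s}{2}+\tau}}\,\d z \;\lesssim\; \sum_j \frac{\lam^{\frac{n-2s}{2}}}{(1+\lam|y-x_j^\pm|)^{\frac{n-2s}{2}+\tau}},
\]
which converts $\|f_k\|_{**}$-controlled sources directly into $\|\cdot\|_*$-controlled output. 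For the term $K Z^{2_s^*-2}\varphi_k$, one splits $\Rn = \bigcup(\Omega_j^+ \cup \Omega_j^-)$ as in \eqref{2.1}–\eqref{2.2}; on the region near $x_j^\pm$ the factor $Z_{x_j^\pm,\lam}^{2_s^*-2}$ decays like $\lam^{2s}(1+\lam|y-x_j^\pm|)^{-4s}$, and since $\tau < \frac{n-4s}{2(n-2s)}$ the exponent arithmetic yields that the self-interaction contributes a factor that is \emph{not} small but can be absorbed, while the cross terms (the bubble at $x_i^\pm$ seen from the $\Omega_j^\pm$ cell, $i \neq j$) contribute a genuinely small factor of the form $\big(\sum_{i\neq j}(\lam|x_i^\pm - x_j^\pm|)^{-\theta}\big) = o(1)$ as $k \to \infty$, because the bubble points spread out (this uses the separation estimates $|x_i^+ - x_j^+| \gtrsim \frac{1}{\lam}$-scale computations, carefully tracked in the three regimes (i)–(iii) of $\bar h$).

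\emph{Step 3 (Closing the argument).} Combining Steps 1–2 gives
\[
\|\varphi_k\|_* \;\leq\; C\big(\|f_k\|_{**} + \|c_{l,k}\|\big) + o(1)\|\varphi_k\|_* \;=\; o(1) + o(1)\|\varphi_k\|_*,
\]
so for $k$ large the $o(1)\|\varphi_k\|_*$ term is absorbed on the left, forcing $\|\varphi_k\|_* = o(1)$, which contradicts $\|\varphi_k\|_* = 1$. This completes the proof.

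\textbf{Main obstacle.} The delicate point is Step 2: showing that the self-interaction term $\int |y-z|^{2s-n} K Z_{\bar r,\bar h,\bar y'',\lam}^{2_s^*-2}|\varphi_k|$ does not have a coefficient that swamps the estimate. A crude bound would give a constant $C$ independent of $k$ but \emph{not} smaller than $1$, which is useless for absorption; the resolution is the choice of the weight exponent $\tau$ (chosen precisely so that $\tau < \frac{n-4s}{2(n-2s)}$), which makes the singular integral near each bubble produce a \emph{bounded} but subcritical contribution, and then the true smallness is harvested from the inter-bubble distances growing with $k$. Getting this bookkeeping right simultaneously in all three $\bar h$-regimes — especially regime (iii), where $\bar h \to 0$ and the points $x_j^+$ and $x_j^-$ may come close together — is where the careful estimates flagged in Remark \ref{rema1.4} are essential, and is the part requiring the most care.
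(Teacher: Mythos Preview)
Your overall setup (contradiction, normalization, estimating the $c_l$ by testing, representing $\varphi_k$ via the Riesz kernel) is correct and matches the paper. The gap is in your closing argument, Step 3. You write
\[
\|\varphi_k\|_* \;\leq\; C\big(\|f_k\|_{**} + |c_{l,k}|\big) + o(1)\,\|\varphi_k\|_*,
\]
but this inequality is not true: the self-interaction piece of $\int |y-z|^{2s-n}\,K\,Z_{\bar r,\bar h,\bar y'',\lam}^{2_s^*-2}|\varphi_k|$ does \emph{not} carry an $o(1)$ factor. When $y$ is close to a bubble point $x_j^\pm$, the convolution of $Z_{x_j^\pm,\lam}^{2_s^*-2}$ against the $j$-th term of the weight produces exactly the weight back (with a fixed constant, not a small one). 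Your ``Main obstacle'' paragraph recognizes this, but the proposed fix --- harvesting smallness from inter-bubble distances --- only kills the \emph{cross} terms ($i\neq j$); it does nothing to the self term. Absorption in the form you wrote is therefore impossible.

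What the paper actually does is different and is the standard way out: the self-interaction term gives back not $o(1)$ times the weight, but the weight with a \emph{strictly faster} decay exponent $\frac{n-2s}{2}+\tau+\theta$ for some small $\theta>0$ (this is Lemma~\ref{lemA.3}). Dividing by the weight yields a ratio that is bounded by $1$ near the bubble points but tends to $0$ away from them. So the pointwise inequality one gets is
\[
1=\|\varphi_k\|_* \;\lesssim\; o(1) + \sup_y \frac{\sum_j (1+\lam|y-x_j^\pm|)^{-(\frac{n-2s}{2}+\tau+\theta)}}{\sum_j (1+\lam|y-x_j^\pm|)^{-(\frac{n-2s}{2}+\tau)}},
\]
which forces the supremum to be nearly attained inside some fixed ball $B_{R/\lam}(x_j^*)$. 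Now one blows up: $\tilde\varphi(y)=\lam^{-\frac{n-2s}{2}}\varphi(\lam^{-1}y+x_j^*)$ converges (on compact sets) to a bounded solution of the linearized limit equation $(-\Delta)^s u=(2_s^*-1)U_{0,1}^{2_s^*-2}u$, and the orthogonality conditions defining $\mathbb H$ pass to the limit to say $u$ is orthogonal to the kernel, hence $u=0$ by nondegeneracy of the bubble. This contradicts the lower bound on $B_R$. You are missing this blow-up\,/\,nondegeneracy step entirely; it is the mechanism that replaces the absorption you were hoping for.
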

		\begin{proof}
			We argue by contradiction. Suppose that there exist $k \to \infty$, $f=f_k$, $\lam =\lam _k \in [L_0 k^{\frac{n-2s}{n-4s-\al }},$ $L_1k^{\frac{n-2s}{n-4s-\al }}]$, $(\bar{r}_k,\bar{h}_k,\bar{y}''_k)$ satisfying \eqref{1.5}, $\varphi_k$ solving \eqref{a.2.1} for $f=f_k$, $\lam =\lam _k$, $\bar{r}=\bar{r}_k$, $\bar{h}=\bar{h}_k$, $\bar{y}''=\bar{y}''_k$ with $\|f_k\|_{**}\to 0$ and $\|\varphi_k\|_{*}\geq c> 0$. Without loss of generality, we may assume that $\|\varphi_k\|_{*}=1$. For simplicity, we drop the subscript $k$.
			
			From equation \eqref{a.2.1}, we have
			\begin{align*}	
				|\varphi(y)|\lesssim& \int\frac{K(|z'|,z'')}{|z-y|^{n-2s}}Z_{\bar{r},\bar{h},\bar{y}'',\lam }^{2_s^*-2}|\varphi|\, \d z +\int\frac{1}{|z-y|^{n-2s}}|f|\, \d z \notag\\
				& +\int\frac{1}{|z-y|^{n-2s}}\Big|\sum_{l=2}^n
				c_l\sum_{j=1}^k(Z_{x_j^+,\lam }^{2_s^*-2}Z_{j,l}^+ +Z_{x_j^-,\lam }^{2_s^*-2}Z_{j,l}^-)\Big|\, \d z. 
			\end{align*}
			
			Using Lemma \ref{lemA.3}, we can deduce that
			\begin{align*}
				&\int\frac{K(|z'|,z'')}{|z-y|^{n-2s}}Z_{\bar{r},\bar{h},\bar{y}'',\lam }^{2_s^*-2}|\varphi|\, \d z\notag\\
				\lesssim& \|\varphi\|_*\lam ^{\frac{n-2s}{2}}\int\frac{Z_{\bar{r},\bar{h},\bar{y}'',\lam }^{2_s^*-2}}{|z-y|^{n-2s}}\Big(\sum_{j=1}^{k}\frac{1}{(1+\lam |z-x_j^+|)^{\frac{n-2s}{2}+\tau}}+\sum_{j=1}^{k}\frac{1}{(1+\lam |z-x_j^-|)^{\frac{n-2s}{2}+\tau}}\Big)\, \d z\notag\\
				\lesssim &\|\varphi\|_*\lam ^{\frac{n-2s}{2}}\Big(\sum_{j=1}^{k}\frac{1}{(1+\lam |y-x_j^+|)^{\frac{n-2s}{2}+\tau+\theta}}+\sum_{j=1}^{k}\frac{1}{(1+\lam |y-x_j^-|)^{\frac{n-2s}{2}+\tau+\theta}}\Big),
			\end{align*}
			where $\theta>0$ is a small constant. Similarily, it follows from Lemma \ref{lemA.2} that
			\begin{align*}
				&\int\frac{1}{|z-y|^{n-2s}}|f|\, \d z\notag\\
				\lesssim&\|f\|_{**}\lam ^{\frac{n+2s}{2}}\int\frac{1}{|z-y|^{n-2s}}\Big(\sum_{j=1}^{k}\frac{1}{(1+\lam |z-x_j^+|)^{\frac{n+2s}{2}+\tau}}+\sum_{j=1}^{k}\frac{1}{(1+\lam |z-x_j^-|)^{\frac{n+2s}{2}+\tau}}\Big)\, \d z\notag\\
				\lesssim &\|f\|_{**}\lam ^{\frac{n-2s}{2}}\Big(\sum_{j=1}^{k}\frac{1}{(1+\lam |y-x_j^+|)^{\frac{n-2s}{2}+\tau}}+\sum_{j=1}^{k}\frac{1}{(1+\lam |y-x_j^-|)^{\frac{n-2s}{2}+\tau}}\Big).
			\end{align*}
			From the definitions of $Z_{j,l}^\pm$ and Lemma \ref{lemA.1}, for $j=1,\ldots,k$, we have
			\be \label{2.9}
			|Z_{j,2}^\pm|=O (\lam ^{-\beta}Z_{x_j^{\pm},\lam }),\quad |Z_{j,l}^\pm|=O(\lam  Z_{x_j^{\pm},\lam }),\quad l=3,\ldots,n,
			\ee
			where $\beta=\frac{\al }{n-2s}$. Combining estimates \eqref{2.9} and Lemma \ref{lemA.2}, we have
			\begin{align*}
				&\int\frac{1}{|z-y|^{n-2s}}\Big|\sum_{j=1}^k(Z_{x_j^+,\lam }^{2_s^*-2}Z_{j,l}^+ +Z_{x_j^-,\lam }^{2_s^*-2}Z_{j,l}^-)\Big| \, \d z\notag\\
				\lesssim&\lam ^{\frac{n+2s}{2}+n_l} \int\frac{1}{|z-y|^{n-2s}}\Big(\sum_{j=1}^k\frac{1}{(1+\lam |z-x_j^+|)^{n+2s}}+\sum_{j=1}^k\frac{1}{(1+\lam |z-x_j^-|)^{n+2s}}\Big)\, \d z\ \notag\\
				\lesssim&  \lam ^{\frac{n-2s}{2}+n_l}\Big(\sum_{j=1}^k\frac{1}{(1+\lam |y-x_j^+|)^{\frac{n-2s}{2}+\tau}}+\sum_{j=1}^k\frac{1}{(1+\lam |y-x_j^-|)^{\frac{n-2s}{2}+\tau}}\Big),
			\end{align*}
			where $n_2=-\beta$, $n_l=1$ for $l=3,\ldots,n$.

			Next, we want to estimate $c_l$, $l=2,\ldots,n$. Multiplying equation \eqref{a.2.1} by $Z_{1,t}^+$ $ (t=2,\ldots,n)$ and integrating, we see that $c_l$ satisfies
			\begin{align}\label{2.11}
				&\sum_{l=2}^n c_l\sum_{j=1}^k\int\big(Z_{x_j^+,\lam }^{2_s^*-2}Z_{j,l}^+ +Z_{x_j^-,\lam }^{2_s^*-2}Z_{j,l}^-\big)Z_{1,t}^+\notag\\
				=&\int ((-\Delta)^s\varphi+V(r,y'')\varphi-(2_s^*-1)K(r,y'')Z_{\bar{r},\bar{h},\bar{y}'',\lam }^{2_s^*-2}\varphi-f)Z_{1,t}^+.
			\end{align}
			It is easy to verify that
			\be \label{2.12}
			\sum_{j=1}^k\int(Z_{x_j^+,\lam }^{2_s^*-2}Z_{j,l}^+ +Z_{x_j^-,\lam }^{2_s^*-2}Z_{j,l}^-)Z_{1,t}^+=
			\left\{
			\begin{aligned}&
				(\bar{c}+o(1))\lam ^{2n_t}, && l=t, \\
				&o(1), && l\neq t,
			\end{aligned}
			\right.
			\ee
			for some constant $\bar{c}>0$.
			
			It follows from Lemma \ref{lemA.1} and \eqref{2.9} that
			\begin{align*}
				&|\langle V(r,y'')\varphi,Z_{1,t}^+\rangle| \notag\\
				\lesssim& \lam^{\frac{n-2s}{2}+n_t}\|\varphi\|_*\int\frac{\eta}{(1+\lam|y-x_1^+|)^{n-2s}}  \times\Big(\sum_{j=1}^{k}\frac{\lam^{\frac{n-2s}{2}}}{(1+\lam|y-x_j^+|)^{\frac{n-2s}{2}+\tau}}+\sum_{j=1}^{k}\frac{\lam^{\frac{n-2s}{2}}}{(1+\lam|y-x_j^-|)^{\frac{n-2s}{2}+\tau}}\Big)\notag\\
				\lesssim&\|\varphi\|_* \lam^{n-2s+n_t} \Big(\int\frac{\eta}{(1+\lam|y-x_1^+|)^{\frac{3}{2}(n-2s)+\tau}}\notag\\&+\sum_{j=2}^{k}\frac{1}{(\lam|x_1^+-x_j^+|)^{\tau}}\int\Big(\frac{\eta}{(1+\lam|y-x_1^+|)^{\frac{3}{2}(n-2s)}}+\frac{\eta}{(1+\lam|y-x_j^+|)^{\frac{3}{2}(n-2s)}}\Big)\notag\\
				&+\sum_{j=1}^{k}\frac{1}{(\lam|x_1^+-x_j^-|)^{\tau}}\int\Big(\frac{\eta}{(1+\lam|y-x_1^+|)^{\frac{3}{2}(n-2s)}}+\frac{\eta}{(1+\lam|y-x_j^-|)^{\frac{3}{2}(n-2s)}}\Big)\Big)\notag\\
				\lesssim &\|\varphi\|_* \int\frac{\eta\lam^{n-2s+n_t+\tau}}{(1+\lam|y-x_1^+|)^{\frac{3}{2}(n-2s)}}\notag\\
				\lesssim&\frac{\lam^{n_t}\|\varphi\|_*}{\lam^{s+\var}},
			\end{align*}
			where $\frac{3(n-2s)}{2}+s-\tau-\var>n$, we use for $|(r,y'')-(r_0,y''_0)|<2\sigma$,
			$$\frac{1}{\lam}\lesssim\frac{1}{1+\lam|y-x^+_1|},$$
			and for $\gamma=\tau=\frac{n-4s-\al}{2(n-2s-\al)}<1$ in Lemma \ref{lemA.4}, $\bar{h}$ satisfying \eqref{1.5},
			\begin{align}\label{a2.13}
				&\sum_{j=2}^{k}\frac{1}{(\lam|x_1^+-x_j^+|)^{\tau}}
				\lesssim\frac{k}{\lam^{(1-\frac{\al}{n-2s})\tau}}\lesssim \lam^{\tau},
			\end{align}
			and
			\begin{align}\label{a2.13.1}
				&\sum_{j=2}^{k}\frac{1}{(\lam|x_1^+-x_j^-|)^{\tau}}
				\lesssim\frac{k}{\lam^{(1-\frac{\al}{n-2s})\tau}}+\frac{1}{\lam^\tau}\lesssim \lam^{\tau}.
			\end{align}
			Similarly, we obtain
			\begin{align*}
				&\Big|\int fZ_{1,t}^+\Big|\notag\\
				\lesssim& \|f\|_{**}\int\frac{\eta\lam ^{\frac{n-2s}{2}+n_t}}{(1+\lam |z-x_1^+|)^{n-2s}}\Big(\sum_{j=1}^{k}\frac{\lam ^{\frac{n+2s}{2}}}{(1+\lam |z-x_j^+|)^{\frac{n+2s}{2}+\tau}}+\sum_{j=1}^{k}\frac{\lam ^{\frac{n+2s}{2}}}{(1+\lam |z-x_j^-|)^{\frac{n+2s}{2}+\tau}}\Big)\, \d z\notag\\
				\lesssim& \lam ^{n_t}\|f\|_{**}\Big(1+\sum_{j=2}^k\frac{1}{(\lam |x_1^+-x_j^+|)^{\sigma}} + \sum_{j=1}^k\frac{1}{(\lam |x_1^+-x_j^-|)^{\sigma}}\Big)\notag\\
				\lesssim&  \lam ^{n_t}\|f\|_{**},
			\end{align*}
			where we used \eqref{A.1} and \eqref{A.2} with $\sigma\in [\frac{n-4s-\alpha}{n-2s-\alpha}, 1)$ such that\begin{align}\label{a2.14}
				&\sum_{j=2}^{k}\frac{1}{(\lam |x_1^+-x_j^+|)^{\sigma}}
				\lesssim\frac{k}{\lam ^{(1-\frac{\al }{n-2s})\sigma}}\leq C,
			\end{align}
			and
			\begin{align}\label{a2.14.1}
				&\sum_{j=1}^{k}\frac{1}{(\lam |x_1^+-x_j^-|)^{\sigma}}
				\lesssim\frac{k}{\lam ^{(1-\frac{\al }{n-2s})\sigma}}+\frac{1}{\lam ^\sigma}\leq C.
			\end{align}
			
			On the other hand, direct calculation gives
			\be \label{2.13}
			\int \Big((-\Delta)^s\varphi-(2_s^*-1)K(r,y'')Z_{\bar{r},\bar{h},\bar{y}'',\lam }^{2_s^*-2}\varphi\Big)Z_{1,t}^+ =O\big(\frac{\lam ^{n_t}\|\varphi\|_*}{\lam ^{s+\var }}\big).
			\ee
			Hence,
			$$
			\text{ RHS of \eqref{2.11} } = O\Big(\lam ^{n_t}\Big(\Big(\frac{\|\varphi\|_*}{\lam ^{s+\var }}\Big)+\|f\|_{**}\Big)\Big),
			$$
			which, together with \eqref{2.11} and \eqref{2.12}, yields that
			\begin{equation*}
				c_l=\frac{1}{\lam ^{n_l}}\big(o(\|\varphi\|_*)+O(\|f\|_{**})\big).
			\end{equation*}
			So,
			\begin{equation*}
				\|\varphi\|_* \lesssim \Big(o(1)+\|f\|_{**}+\frac{\sum_{j=1}^k\big(\frac{1}{(1+\lam |y-x_j^+|)^{\frac{n-2s}{2}+\tau+\theta}}+\frac{1}{(1+\lam |y-x_j^-|)^{\frac{n-2s}{2}+\tau+\theta}}\big)}{\sum_{j=1}^k\big(\frac{1}{(1+\lam |y-x_j^+|)^{\frac{n-2s}{2}+\tau}}+\frac{1}{(1+\lam |y-x_j^-|)^{\frac{n-2s}{2}+\tau}}\big)}\Big),
			\end{equation*}
			which, together with $\|\varphi\|_*=1$, yields that there is $R>0$ such that
			\be \label{2.18}
			\|\lam ^{-\frac{n-2s}{2}}\varphi\|_{L^\infty(B_{R/\lam }(x_j^*))}\geq a>0,
			\ee
			for some $j$ with $x_j^*=x_j^+$ or $x_j^-$. But $\tilde{\varphi}(y)=\lam ^{-\frac{n-2s}{2}}\varphi(\lam ^{-1}y+x_j^*)$ converges uniformly in any compact set to a solution $u$ of
			\be \label{2.19}
			(-\Delta)^s u -(2_s^*-1)U_{0,1}^{2_s^*-2}u=0 \quad \text{ in }\,\Rn,
			\ee
			and $u$ is perpendicular to the kernel of \eqref{2.19}, according to the definition of $\mathbb{H}$. As a consequence, $u=0$, which is a contradiction to \eqref{2.18}.
		\end{proof}
		
		From Lemma \ref{lem2.1}, using the same argument as in the proof of Proposition 4.1 in \cite{del2003two}, we can prove the following result.
		
		\begin{lem}\label{lem2.2}
			There exist $k_0>0$, independent of $k$, such that for $k\geq k_0$ and all $f \in L^\infty(\Rn)$, problem \eqref{a.2.1} has a unique solution $\varphi=\mathcal{L}_k(f)$. Moreover,
			$$
			\|\mathcal{L}_k(f)\|_* \lesssim \|f\|_{**}, \quad |c_l|\lesssim \frac{1}{\lam ^{n_l}}\|f\|_{**},
			$$
			where $n_2=-\beta$, $n_l=1$ for $l=3,\ldots,n$.
		\end{lem}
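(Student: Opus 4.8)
The plan is to deduce Lemma~\ref{lem2.2} from the a priori bound of Lemma~\ref{lem2.1} by a Fredholm-type argument on the constrained Hilbert space $\mathbb{H}$, exactly in the spirit of Proposition~4.1 of \cite{del2003two}. Write $\langle\cdot,\cdot\rangle$ for the inner product associated with $\|\cdot\|_{H^s(\Rn)}$. Since every $\psi\in\mathbb{H}$ annihilates the finite-rank terms on the right-hand side of \eqref{a.2.1} (by the very definition of $\mathbb{H}$), a function $\varphi\in\mathbb{H}$ solves \eqref{a.2.1} for suitable real numbers $c_l$ if and only if
\[
\langle\varphi,\psi\rangle-(2_s^*-1)\int K(r,y'')Z_{\bar{r},\bar{h},\bar{y}'',\lam}^{2_s^*-2}\varphi\psi=\int f\psi\qquad\text{for all }\psi\in\mathbb{H},
\]
after which the $c_l$ are recovered by testing \eqref{a.2.1} against $Z_{1,t}^+$, $t=2,\ldots,n$; this is a linear system whose coefficient matrix is, by \eqref{2.12}, diagonal up to an $o(1)$ perturbation with diagonal entries of size $\lam^{2n_t}$, hence invertible for $k$ large.

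By the Riesz representation theorem the left-hand side above equals $\langle(\mathrm{Id}-\mathcal{A}_k)\varphi,\psi\rangle$, where $\mathcal{A}_k\colon\mathbb{H}\to\mathbb{H}$ is the bounded self-adjoint operator induced by the weight $(2_s^*-1)K Z_{\bar{r},\bar{h},\bar{y}'',\lam}^{2_s^*-2}$. This operator is compact: the weight lies in $L^{n/(2s)}(\Rn)$ (it decays like $|y|^{-4s}$ at infinity), so after truncating it and invoking the embedding $H^s(\Rn)\hookrightarrow L^{2_s^*}(\Rn)$ together with a Rellich argument one obtains a compact bilinear form; alternatively one passes to the Caffarelli--Silvestre extension and argues on $\R_+^{n+1}$. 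Thus $\mathrm{Id}-\mathcal{A}_k$ is Fredholm of index zero on $\mathbb{H}$, and it suffices to establish injectivity. If $\varphi\in\mathbb{H}$ lies in the kernel, then $\varphi$ solves \eqref{a.2.1} with $f\equiv0$ and, by the system just mentioned, all $c_l=0$; the rescaling/blow-up analysis carried out in the proof of Lemma~\ref{lem2.1} then forces $\varphi\equiv0$ provided $k\geq k_0$. Hence $\mathrm{Id}-\mathcal{A}_k$ is invertible and \eqref{a.2.1} has, for each $f$, a unique solution $\varphi=\mathcal{L}_k(f)\in\mathbb{H}$; feeding this $H_s$-solution back into the integral inequality used at the start of the proof of Lemma~\ref{lem2.1} shows $\varphi\in L^\infty(\Rn)$ with decay fast enough that $\|\varphi\|_*<\infty$.

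It remains to prove the quantitative bounds. For $\|\mathcal{L}_k(f)\|_*\lesssim\|f\|_{**}$ I would argue by contradiction: if it failed, there would be $k_m\to\infty$, data $f_m$ and solutions $\varphi_m=\mathcal{L}_{k_m}(f_m)$ with $\|\varphi_m\|_*=1$ and $\|f_m\|_{**}\to0$, which contradicts Lemma~\ref{lem2.1}. Granted this, testing \eqref{a.2.1} against $Z_{1,t}^+$ as in \eqref{2.11}, inserting \eqref{2.12}, the bound $|\int f Z_{1,t}^+|\lesssim\lam^{n_t}\|f\|_{**}$ and the estimate \eqref{2.13} (both obtained inside the proof of Lemma~\ref{lem2.1}), together with $\|\varphi\|_*\lesssim\|f\|_{**}$, yields $|c_l|\lesssim\lam^{-n_l}\|f\|_{**}$ with $n_2=-\beta$ and $n_l=1$ for $l=3,\ldots,n$, which is the asserted bound.

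The only genuinely delicate point is the injectivity step---transferring the blow-up argument of Lemma~\ref{lem2.1} to rule out a nontrivial kernel---since that is where the nonlocality of $(-\Delta)^s$, the mutual interaction of the $2k$ bubbles $U_{x_j^\pm,\lam}$, and the precise choice of the exponent $\tau$ together with the conditions \eqref{1.5} on $\lam$ and $\bar h$ all enter. By contrast, the compactness of $\mathcal{A}_k$ and the $L^\infty$ regularity upgrade are routine in the fractional setting, requiring only the customary care with the $\mathcal{L}_s(\Rn)$-tail of $(-\Delta)^s$.
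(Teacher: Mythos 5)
Your proposal is correct and follows essentially the same route as the paper, which proves Lemma~\ref{lem2.2} by simply invoking Lemma~\ref{lem2.1} together with "the same argument as in the proof of Proposition 4.1 in \cite{del2003two}" --- i.e.\ exactly the Riesz-representation/Fredholm-alternative scheme on $\mathbb{H}$ plus the a priori estimate that you spell out. (Minor remark: the weight $Z_{\bar r,\bar h,\bar y'',\lam}^{2_s^*-2}$ is in fact compactly supported because of the cutoff $\eta$, which makes your compactness step even easier than the decay argument you sketch.)
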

		
		Next, we consider the perturbation problem for $\varphi \in \mathbb{H}$:
		\begin{align}
			&(-\Delta)^s(Z_{\bar{r},\bar{h},\bar{y}'',\lam }+\varphi)+V(r,y'')(Z_{\bar{r},\bar{h},\bar{y}'',\lam }+\varphi)\notag\\=& K(r,y'')(Z_{\bar{r},\bar{h},\bar{y}'',\lam }+\varphi)_+^{2_s^*-1}+\sum_{l=2}^n c_l\sum_{j=1}^k\Big(Z_{x_j^+,\lam }^{2_s^*-2}Z_{j,l}^+ +Z_{x_j^-,\lam }^{2_s^*-2}Z_{j,l}^-\Big)\quad \text{ in }\,\Rn.\label{2.21}
		\end{align}
		Rewrite \eqref{2.21} as
		\begin{align}
			&(-\Delta)^s\varphi+V(r,y'')\varphi-(2_s^*-1)K(r,y'')Z_{\bar{r},\bar{h},\bar{y}'',\lam }^{2_s^*-2}\varphi\notag\\=& N(\varphi)+l_k 
			+\sum_{l=2}^n c_l\sum_{j=1}^k\Big(Z_{x_j^+,\lam }^{2_s^*-2}Z_{j,l}^+ +Z_{x_j^-,\lam }^{2_s^*-2}Z_{j,l}^-\Big)\quad \text{ in }\,\Rn,\label{2.22}
		\end{align}
		where
		$$
		N(\varphi)=K(r,y'')\Big((Z_{\bar{r},\bar{h},\bar{y}'',\lam }+\varphi)_+^{2_s^*-1}-Z_{\bar{r},\bar{h},\bar{y}'',\lam }^{2_s^*-1}-(2_s^*-1)Z_{\bar{r},\bar{h},\bar{y}'',\lam }^{2_s^*-2}\varphi\Big),
		$$
		and
		\begin{align*}
			l_k=&\Big(K(r,y'')Z_{\bar{r},\bar{h},\bar{y}'',\lam }^{2_s^*-1}-\sum_{j=1}^{k} \big(\eta U_{x_j^+,\lam }^{2_s^*-1} +\eta U_{x_j^-,\lam }^{2_s^*-1}\big)\Big)-V(r,y'')Z_{\bar{r},\bar{h},\bar{y}'',\lam }\notag\\
			&-\sum_{j=1}^k c(n, s) \lim _{\var \rightarrow 0^{+}} \int_{\Rn\backslash B_\var(y)}\Big(\frac{(\eta(x)-\eta(y)) U_{x_j^{+}, \lam}(y)}{|x-y|^{n+2 s}}+\frac{(\eta(x)-\eta(y)) U_{x_j^{-}, \lam}(y)}{|x-y|^{n+2 s}}\Big)\,\d y\notag \\
			= & J_0-J_1-J_2.
		\end{align*}
		
		In order to apply the contraction mapping theorem to prove that equation \eqref{2.22} is uniquely solvable if $\|\varphi\|_*$ is small enough, we need to estimate $N(\varphi)$ and $l_k$, respectively.
		
		\begin{lem}\label{lem2.3}
			If $n> 4s+1$ and $\|\varphi\|_*\leq1$, then
			\begin{equation*}
				\|N(\varphi)\|_{**}\lesssim\lam^{\frac{4s\tau}{n-2s}}\|\varphi\|_{*}^{\min(2_s^*-1,2)}.
			\end{equation*}
		\end{lem}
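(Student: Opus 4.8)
The plan is to estimate $N(\varphi)$ pointwise and then translate the pointwise bound into the $\|\cdot\|_{**}$ norm using the algebraic inequalities for sums of the standard bubbles. Since $N(\varphi) = K(r,y'')\big((Z+\varphi)_+^{2_s^*-1} - Z^{2_s^*-1} - (2_s^*-1)Z^{2_s^*-2}\varphi\big)$ with $Z = Z_{\bar r,\bar h,\bar y'',\lam}$, and $K$ is bounded, the first step is the elementary convexity-type estimate: for the power $p = 2_s^*-1 = \frac{n+2s}{n-2s}$,
\be
\big|(a+b)_+^p - a^p - p a^{p-1} b\big| \lesssim
\begin{cases}
|b|^p, & \text{if } p \le 2,\\
a^{p-2}|b|^2 + |b|^p, & \text{if } p > 2,
\end{cases}
\ee
applied with $a = Z$, $b = \varphi$. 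Because $n > 4s+1$ is exactly the condition that may put $p$ below or above $2$, both cases must be recorded, which is why the exponent $\min(2_s^*-1,2)$ appears in the statement.

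\textbf{Second step.} For the term $|\varphi|^p$, I would write $|\varphi(y)| \le \|\varphi\|_* \lam^{\frac{n-2s}{2}} W(y)$ where $W(y) = \sum_{j=1}^k\big((1+\lam|y-x_j^+|)^{-\frac{n-2s}{2}-\tau} + (1+\lam|y-x_j^-|)^{-\frac{n-2s}{2}-\tau}\big)$, so that $|\varphi|^p \le \|\varphi\|_*^p \lam^{\frac{(n-2s)p}{2}} W^p$. The key algebraic fact, of the type used already in Lemma \ref{lem2.1} (and which follows from Lemma \ref{lemA.4} together with the separation of the points $x_j^\pm$), is that $W(y)^p$ can be bounded by $C\lam^{?}$ times the "weight" $\widetilde W(y) = \sum_{j=1}^k\big((1+\lam|y-x_j^+|)^{-\frac{n+2s}{2}-\tau} + (1+\lam|y-x_j^-|)^{-\frac{n+2s}{2}-\tau}\big)$ that defines $\|\cdot\|_{**}$. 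Tracking the powers of $\lam$: each factor of $W$ contributes a decay of order $\frac{n-2s}{2}+\tau$, and $p$ such factors give a single effective bubble of decay order $p(\frac{n-2s}{2}+\tau) = \frac{n+2s}{2} + \frac{4s}{n-2s} + p\tau$; comparing with the target order $\frac{n+2s}{2}+\tau$ produces a surplus of decay $\frac{4s}{n-2s} + (p-1)\tau$ which, converted through Lemma \ref{lemA.4} and the bound on $k$ in terms of $\lam$, yields the stated prefactor $\lam^{\frac{4s\tau}{n-2s}}$ after matching $\lam^{\frac{(n-2s)p}{2}-\frac{n+2s}{2}} = \lam^{\frac{4s}{n-2s}}$ against $\lam^{-\frac{n+2s}{2}}$ in the definition of $\|\cdot\|_{**}$. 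The term $Z^{p-2}|\varphi|^2$ (only relevant when $p>2$) is handled analogously: $Z^{p-2}$ is itself a sum of bubbles of decay order $(p-2)\frac{n-2s}{2} = \frac{(6s-n)_+}{?}$, and multiplying by $|\varphi|^2$ (decay order $n-2s+2\tau$, after using $\|\varphi\|_*\le 1$) again lands inside the $\|\cdot\|_{**}$ weight with the same $\lam^{\frac{4s\tau}{n-2s}}$ loss; one checks this case gives $\|\varphi\|_*^2$, and since $\|\varphi\|_*\le 1$ both contributions are dominated by $\|\varphi\|_*^{\min(p,2)}$.

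\textbf{Main obstacle.} I expect the delicate point to be the bookkeeping in the second step: controlling the cross terms in $W^p$ (products of bubbles centered at different points $x_i^\pm$, $x_j^\pm$) and verifying that the sum over $j$ of the resulting interaction factors $(\lam|x_1^+-x_j^\pm|)^{-\tau}$-type quantities stays bounded by a harmless power of $\lam$. This is exactly where the choice $\tau = \frac{n-4s-\al}{2(n-2s-\al)} < \frac{n-4s}{2(n-2s)}$ and the constraint $\sqrt{1-\bar h^2} = (M_1\lam^{\al/(n-2s)})^{-1}$ from \eqref{1.5} are used, since the minimal distance among the $x_j^\pm$ degenerates like $\lam^{-\al/(n-2s)}$ rather than being order one; estimates \eqref{a2.13}--\eqref{a2.13.1} are the template. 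A secondary, more routine nuisance is the positive-part nonlinearity: one checks that on the region where $Z+\varphi \le 0$ one has $|\varphi| \gtrsim Z$ pointwise, so $|N(\varphi)| \lesssim Z^{2_s^*-1} + |\varphi|^{2_s^*-1} \lesssim |\varphi|^{2_s^*-1}$ there, which is already covered by the bound above; hence the positive part causes no extra loss. Finally, assembling the pieces and taking the supremum defining $\|\cdot\|_{**}$ gives the claimed inequality $\|N(\varphi)\|_{**} \lesssim \lam^{\frac{4s\tau}{n-2s}}\|\varphi\|_*^{\min(2_s^*-1,2)}$.
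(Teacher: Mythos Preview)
Your proposal is correct and follows essentially the same route as the paper: the same pointwise Taylor-type inequality for the $p$-power nonlinearity, then the H\"older splitting $W^{2_s^*-1}\le \widetilde W\cdot\big(\sum_j(1+\lam|y-x_j^\pm|)^{-\tau}\big)^{4s/(n-2s)}$, and finally the bounds \eqref{a2.13}--\eqref{a2.13.1} to control the last factor by $\lam^{\tau}$, which produces exactly the $\lam^{4s\tau/(n-2s)}$ prefactor. Your power-counting line ``$p(\tfrac{n-2s}{2}+\tau)=\tfrac{n+2s}{2}+\tfrac{4s}{n-2s}+p\tau$'' is miscomputed (the correct value is $\tfrac{n+2s}{2}+p\tau$), but this is only a bookkeeping slip and does not affect the argument, since the actual mechanism is the H\"older split just described rather than a naive single-bubble decay count.
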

		
		\begin{proof}
			Note that $K\left(r, y''\right)$ is bounded, it is easy to check that
			$$
			|N(\varphi)| \lesssim \left\{\begin{aligned}&|\varphi|^{2_s^*-1}, && \text { if } 2_s^* \leq 3 \\ &|Z_{\bar{r}, \bar{h}, \bar{y}'', \lam}|^{2_s^*-3} \varphi^2+|\varphi|^{2_s^*-1}, && \text { if } 2_s^*>3.\end{aligned}\right.
			$$
			If $2_s^*\leq3$, we have
			\begin{equation*}
				|N(\varphi)|\lesssim |\varphi|^{2_s^*-1},
			\end{equation*}
			which, together with H\"older inequality, yields that
			\begin{align}\label{b2.23}
				|N(\varphi)|\lesssim&\|\varphi\|_{*}^{2_s^*-1} \lam ^{\frac{n+2s}{2}}\Big(\sum_{j=1}^{k}\frac{1}{(1+\lam |y-x_j^+|)^{\frac{n-2s}{2}+\tau}}+\sum_{j=1}^{k}\frac{1}{(1+\lam |y-x_j^-|)^{\frac{n-2s}{2}+\tau}}\Big)^{2_s^*-1}\notag\\
				\lesssim& \|\varphi\|_{*}^{2_s^*-1} \lam ^{\frac{n+2s}{2}}\Big(\sum_{j=1}^{k}\frac{1}{(1+\lam |y-x_j^+|)^{\frac{n+2s}{2}+\tau}}+\sum_{j=1}^{k}\frac{1}{(1+\lam |y-x_j^-|)^{\frac{n+2s}{2}+\tau}}\Big)\notag\\
				& \times\Big(\sum_{j=1}^{k}\frac{1}{(1+\lam |y-x_j^+|)^{\tau}}+\sum_{j=1}^{k}\frac{1}{(1+\lam |y-x_j^-|)^{\tau}}\Big)^\frac{4s}{n-2s}\notag\\
				\lesssim&\|\varphi\|_{*}^{2_s^*-1} \lam ^{\frac{n+2s}{2}+\frac{4s\tau}{n-2s}}\Big(\sum_{j=1}^{k}\frac{1}{(1+\lam |y-x_j^+|)^{\frac{n+2s}{2}+\tau}}+\sum_{j=1}^{k}\frac{1}{(1+\lam |y-x_j^-|)^{\frac{n+2s}{2}+\tau}}\Big),  \end{align}
			where we used \eqref{a2.13} and \eqref{a2.13.1}.
			
			Therefore,
			\begin{equation*}
				\|N(\varphi)\|_{**}\lesssim\lam^{\frac{4s\tau}{n-2s}} \|\varphi\|_{*}^{2_s^*-1}.
			\end{equation*}
			
			The result for $2_s^*>3$ can be proved in a similar argument. 
		\end{proof}
		
		Next, we estimate $l_k$.
		\begin{lem}\label{lem2.4}
			If $n\geq4$ and $s$ satisfies \eqref{0}, then there is a small constant $\var >0$, such that
			\begin{equation*}
				\|l_k\|_{**}\lesssim\frac{1}{\lam ^{\frac{2s+1-\beta}{2}+\var }}.
			\end{equation*}
		\end{lem}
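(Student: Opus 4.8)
The plan is to decompose $l_k = J_0 - J_1 - J_2$ and estimate each piece in the $\|\cdot\|_{**}$ norm separately, working region by region over the sets $\Omega_j^\pm$. Since the weighted norm is symmetric under $j \mapsto j'$ and under the $\pm$ reflection, it suffices to bound each term on $\Omega_1^+$ in terms of the bubble centered at $x_1^+$ (plus interaction contributions from the other $2k-1$ centers), and then invoke the summation estimates \eqref{a2.13}, \eqref{a2.13.1}, \eqref{a2.14}, \eqref{a2.14.1} from Lemma \ref{lem2.1} to control the cross terms. Throughout I would use $\lam \sim k^{\frac{n-2s}{n-4s-\al}}$ and $\sqrt{1-\bar h^2} = (M_1 \lam^{\al/(n-2s)})^{-1}$ from \eqref{1.5}, together with the pointwise bounds for $U_{x_j^\pm,\lam}$ and its derivatives from Lemma \ref{lemA.1} and the convolution estimates in Lemmas \ref{lemA.2}--\ref{lemA.4}.

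\textbf{The term $J_2$ (cut-off error).} On $\Omega_1^+$ the factor $\eta(x) - \eta(y)$ is supported where $|(r,y'')-(r_0,y_0'')| \gtrsim \sigma$, so $|y - x_1^+| \gtrsim 1$ there; hence $U_{x_1^+,\lam}(y) \lesssim \lam^{-\frac{n-2s}{2}}$ and the singular integral defining $J_2$ is a genuinely bounded operator acting on a function that is smooth and rapidly decaying at that scale. Splitting the principal-value integral into $|x-y|\le 1$ and $|x-y|>1$ and using $|\eta(x)-\eta(y)| \lesssim \min(1, |x-y|)$, one gets $|J_2(y)| \lesssim \lam^{-\frac{n-2s}{2}}$ on the support of $1-\eta$, which after dividing by the weight at $x_1^+$ (which is also $\lesssim \lam^{-\frac{n+2s}{2}}(1+\lam|y-x_1^+|)^{-\frac{n+2s}{2}-\tau}$ there, so its reciprocal is $\lesssim \lam^{\frac{n+2s}{2}}(1+\lam|y-x_1^+|)^{\frac{n+2s}{2}+\tau}$) gives a contribution $\lesssim \lam^{\frac{n+2s}{2}} \cdot \lam^{-\frac{n-2s}{2}} \cdot (\text{polynomial in } \lam|y-x_1^+| \text{ evaluated at scale } \lam) = O(\lam^{-N})$ for any $N$, since $|y-x_1^+| \gtrsim 1$ forces $(1+\lam|y-x_1^+|)$-powers into the numerator but the base bound $\lam^{-(n-2s)}$ beats them. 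So $J_2$ is negligible and the dangerous terms are $J_0$ and $J_1$.

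\textbf{The terms $J_0$ and $J_1$ (potential and nonlinear mismatch).} Write $J_0 - J_1 = \big(K Z^{2_s^*-1} - \sum (\eta U_{x_j^+,\lam})^{2_s^*-1} - \sum (\eta U_{x_j^-,\lam})^{2_s^*-1}\big) - V Z$. For the nonlinear part I would expand $Z^{2_s^*-1} = (\sum \eta U_{x_j^\pm,\lam})^{2_s^*-1}$ around the leading bubble $\eta U_{x_1^+,\lam}$ on $\Omega_1^+$: the diagonal term $(\eta U_{x_1^+,\lam})^{2_s^*-1}$ cancels with the corresponding piece of $\sum(\eta U)^{2_s^*-1}$, and what remains is a sum of interaction terms of size $\lesssim U_{x_1^+,\lam}^{2_s^*-2} \sum_{(j,\pm)\ne(1,+)} U_{x_j^\pm,\lam}$ (when $2_s^* \le 3$ one uses the elementary inequality $|(a+b)^{p}-a^p-b^p| \lesssim a^{p-1}b$ for $p \le 2$, with the roles adjusted, and when $2_s^*>3$ the extra term $U^{2_s^*-3}(\sum U)^2$). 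Also $\eta(x)^{2_s^*-1} U^{2_s^*-1}$ vs $(\eta U)^{2_s^*-1}$ contributes nothing since $\eta^{2_s^*-1} = \eta^{2_s^*-1}$; the genuine cut-off discrepancy between $\eta^{2_s^*-1}$ and $\eta$ is handled as in $J_2$ because it lives where $\eta \ne 1$. The factor $K - 1$ is $O(|y - (r_0,y_0'')|) = O(1/\lam + |x_1^+ - (r_0,y_0'')|)$ on the bulk, contributing a gain. Converting the interaction sum to the $**$-norm and applying \eqref{a2.13}--\eqref{a2.14.1} (with $\gamma$ and $\sigma$ chosen as in Lemma \ref{lem2.1}) produces a bound of the form $\lam^{-(s+1/2)+C\tau+\cdots}$; the linear potential term $VZ$ contributes $\lesssim \|V\|_\infty \lam^{\frac{n+2s}{2}} \cdot \lam^{-2s} \cdot (\cdots) = O(\lam^{-s})$ via Lemma \ref{lemA.2}, which is actually better than $\lam^{-(2s+1-\beta)/2}$ precisely when $n > 4s+1$. \textbf{The main obstacle} is bookkeeping the exponents so that the worst of all these contributions is exactly $\lam^{-\frac{2s+1-\beta}{2}-\var}$ for some $\var>0$: this is where the hypothesis \eqref{0} enters — as noted in Remark after Theorem \ref{theo1.1} it is equivalent to $\frac{n-4s}{n-2s} < 2s - \frac12$, $\frac{n-4s}{n-2s} < \frac{n-4s}{2} - \frac{s}{n-2s}$, and $n > 4s+1$, and these three inequalities are exactly what is needed to guarantee that (i) the $\lam^{-s}$ from $VZ$, (ii) the interaction error from the nonlinearity (which carries an extra $\lam^{\tau \cdot \text{const}}$ because of the non-integer power and the $\beta$-shift in $\partial_\lam U$), and (iii) the bound $\lam^{-\beta/2}$-type losses coming from $\beta = \al/(n-2s)$ all sit strictly below $\lam^{-\frac{2s+1-\beta}{2}}$, leaving room for a positive $\var$. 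I would finish by taking $\var$ to be the minimum of the three resulting slacks, which is positive precisely under \eqref{0}.
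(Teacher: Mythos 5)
Your decomposition $l_k=J_0-J_1-J_2$ and the overall strategy (reduce to $\Omega_1^+$ by symmetry, estimate each piece pointwise against the $\|\cdot\|_{**}$ weight, and close with the summation estimates \eqref{a2.13}--\eqref{a2.14.1}) are the same as the paper's, and your treatment of the interaction part of $J_0$ and of the $(K-1)$ term is essentially correct. But your handling of $J_2$ contains a genuine gap. You claim $J_2=O(\lam^{-N})$ for every $N$ because $\eta(x)-\eta(y)$ "lives where $|y-x_1^+|\gtrsim 1$". This fails on two counts. (i) $J_2(x)$ does \emph{not} vanish for $x$ near the concentration points: when $\eta(x)=1$ the integrand is still nonzero for all $y$ with $\eta(y)<1$, and this contribution (the paper's first case of $J_{22}$) is of size $\lam^{\frac{n-2s}{2}-2s}$ at $x=x_j^{\pm}$, i.e.\ of order $\lam^{-2s}$ in the $\|\cdot\|_{**}$ norm --- right at the critical threshold, not arbitrarily small. (ii) For $x$ in the transition annulus where $\nabla\eta\neq0$, your bound $|\eta(x)-\eta(y)|\lesssim\min(1,|x-y|)$ leaves an integrand $\sim|x-y|^{1-n-2s}$ near the diagonal, which is \emph{not} integrable once $s\geq\tfrac12$ --- and hypothesis \eqref{0} forces $s>\tfrac12$ (indeed $s$ close to $1$ when $n\geq6$). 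The paper's estimate \eqref{2.5} resolves this by writing $\eta(x)-\eta(y)=\langle\nabla\eta(x),x-y\rangle+O(|x-y|^2)$ and exploiting the symmetry of the principal value on the linear term, which reduces the kernel to $|z|^{-(n+2s-2)}$; without this cancellation your inner integral diverges.

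There is also an error in your estimate of $J_1=V(r,y'')Z_{\bar r,\bar h,\bar y'',\lam}$. You assert it contributes $O(\lam^{-s})$ and that this beats $\lam^{-\frac{2s+1-\beta}{2}}$ when $n>4s+1$. Since $\beta=\frac{\al}{n-2s}<1$, we have $\frac{2s+1-\beta}{2}>s$, so a bound of order $\lam^{-s}$ would \emph{not} suffice for the lemma. The paper obtains the full bound $\lam^{-\frac{2s+1-\beta}{2}-\var}$ for $J_1$ by using $\frac{1}{\lam}\lesssim\frac{1}{1+\lam|y-x_j^{\pm}|}$ on the support of $\eta$ to convert the surplus factor $\lam^{-\frac{2s-1+\beta}{2}+\var}$ into extra spatial decay; this is exactly where condition \eqref{2}, namely $n-2s+\frac{2s-1+\beta}{2}-\var>\frac{n+2s}{2}+\tau$, is needed, and it is one of the two places (together with \eqref{1}) where the hypothesis \eqref{0} actually enters. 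Your accounting of the role of \eqref{0} is therefore also misplaced.
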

		
		\begin{proof}
			By symmetry, we can assume that $y \in \Omega_1^+$. Then it follows from \eqref{2.1} and \eqref{2.2} that
			\be \label{b2.24}
			|y-x_1^+|\leq |y-x_1^-|, \quad |y-x_1^+|\leq |y-x_j^+|\leq |y-x_j^-|,\quad j=2,\ldots,k.
			\ee
			
			Firstly, we estimate $J_0$.
			\begin{align*}
				J_0=&K(r,y'')Z_{\bar{r},\bar{h},\bar{y}'',\lam }^{2_s^*-1}-\sum_{j=1}^{k} (\eta U_{x_j^+,\lam }^{2_s^*-1}+\eta U_{x_j^-,\lam }^{2_s^*-1})\\
				=&K(r,y'')\Big(Z_{\bar{r},\bar{h},\bar{y}'',\lam }^{2_s^*-1}-\sum_{j=1}^{k} (\eta U_{x_j^+,\lam }^{2_s^*-1}+\eta U_{x_j^-,\lam }^{2_s^*-1})\Big)+(K(r,y'')-1)\sum_{j=1}^{k} (\eta U_{x_j^+,\lam }^{2_s^*-1}+\eta U_{x_j^-,\lam }^{2_s^*-1})\\
				=&J_{0,1}+J_{0,2}.\end{align*}
			Since $K$ is bounded, we have
			\begin{align*}
				J_{0,1}\lesssim& \Big|\Big(\sum_{j=1}^k (\eta U_{x_j^+,\lam }+\eta U_{x_j^-,\lam })\Big)^{2_s^*-1}-\sum_{j=1}^k (\eta U_{x_j^+,\lam }^{2_s^*-1}+\eta U_{x_j^-,\lam }^{2_s^*-1})\Big|\\
				\lesssim&U_{x_1^+,\lam }^{2_s^*-2}(\sum_{j=2}^k U_{x_j^+,\lam }+\sum_{j=1}^k U_{x_j^-,\lam })+(\sum_{j=2}^kU_{x_j^+,\lam }+\sum_{j=1}^k U_{x_j^-,\lam })^{2_s^*-1}\\
				\lesssim&\frac{\lam ^{\frac{n+2s}{2}}}{(1+\lam |y-x_1^+|)^{4s}} \Big(\sum_{j=2}^{k}\frac{1}{(1+\lam |y-x_j^+|)^{n-2s}}+\sum_{j=1}^{k}\frac{1}{(1+\lam |y-x_j^-|)^{n-2s}}\Big)\\
				& + \lam ^{\frac{n+2s}{2}}\Big(\sum_{j=2}^{k}\frac{1}{(1+\lam |y-x_j^+|)^{n-2s}}+\sum_{j=1}^{k}\frac{1}{(1+\lam |y-x_j^-|)^{n-2s}}\Big)^{2_s^*-1}\\
				=&J_{0,1,1}+J_{0,1,2}.
			\end{align*}
			For the term $J_{0,1,1}$, if $n-2s\geq\frac{n+2s}{2}-\tau$, then it follows from \eqref{b2.24} that
			\begin{align}\notag
				J_{0,1,1}\lesssim&\frac{\lam ^{\frac{n+2s}{2}}}{(1+\lam |y-x_1^+|)^{\frac{n+2s}{2}+\tau}} \Big(\sum_{j=2}^{k}\frac{1}{(1+\lam |y-x_j^+|)^{\frac{n+2s}{2}-\tau}}+\sum_{j=1}^{k}\frac{1}{(1+\lam |y-x_j^-|)^{\frac{n+2s}{2}-\tau}}\Big)\\\notag
				\lesssim& \frac{\lam ^{\frac{n+2s}{2}}}{(1+\lam |y-x_1^+|)^{\frac{n+2s}{2}+\tau}} \Big(\sum_{j=2}^{k}\frac{1}{(\lam |x_1^+-x_j^+|)^{\frac{n+2s}{2}-\tau}}+\sum_{j=1}^{k}\frac{1}{(\lam |x_1^+-x_j^-|)^{\frac{n+2s}{2}-\tau}}\Big)\\\label{b.2.21}
				\lesssim& \frac{1}{\lam ^{\frac{2s+1-\beta}{2}+\var }}\frac{\lam ^{\frac{n+2s}{2}}}{(1+\lam |y-x_1^+|)^{\frac{n+2s}{2}+\tau}},
			\end{align}
			where we  used for $\gamma>1$ in Lemma \ref{lemA.4}, $\bar{h}$ satisfying \eqref{1.5},
			\be \label{b2.14}
			\sum_{j=2}^{k}\frac{1}{(\lam |x_1^+-x_j^+|)^{\gamma}}\lesssim\frac{k^\gamma}{\lam ^{(1-\frac{\al }{n-2s})\gamma}}=O\Big(\frac{1}{\lam ^{\frac{2s\gamma}{n-2s}}}\Big),
			\ee
			\be \label{b2.14.1}
			\sum_{j=1}^{k}\frac{1}{(\lam |x_1^+-x_j^-|)^{\gamma}}\lesssim\frac{k^\gamma}{\lam ^{(1-\frac{\al }{n-2s})\gamma}}+\frac{1}{(\lam \bar{h})^\gamma} =O\Big(\frac{1}{\lam ^{\frac{2s\gamma}{n-2s}}}\Big),
			\ee
			and the fact $$\frac{2s}{n-2s}(\frac{n+2 s}{2}-\tau)>\frac{2 s+1-\beta}{2} .$$
			If $n-2s<\frac{n+2s}{2}-\tau$, then $4s>\frac{n+2s}{2}+\tau$. Similar to \eqref{b.2.21}, we have	
			\begin{align*}
				J_{0,1,1}\lesssim&\frac{\lam ^{\frac{n+2s}{2}}}{(1+\lam |y-x_1^+|)^{\frac{n+2s}{2}+\tau}} \Big(\sum_{j=2}^{k}\frac{1}{(1+\lam |y-x_j^+|)^{n-2s}}+\sum_{j=1}^{k}\frac{1}{(1+\lam |y-x_j^-|)^{n-2s}}\Big)\\
				\lesssim& \frac{\lam ^{\frac{n+2s}{2}}}{(1+\lam |y-x_1^+|)^{\frac{n+2s}{2}+\tau}} \Big(\sum_{j=2}^{k}\frac{1}{(\lam |x_1^+-x_j^+|)^{n-2s}}+\sum_{j=1}^{k}\frac{1}{(\lam |x_1^+-x_j^-|)^{n-2s}}\Big)\\
				\lesssim& \frac{1}{\lam ^{\frac{2s+1-\beta}{2}+\var }}\frac{\lam ^{\frac{n+2s}{2}}}{(1+\lam |y-x_1^+|)^{\frac{n+2s}{2}+\tau}},
			\end{align*}
			where we used the fact $2s>\frac{2s+1-\beta}{2}$.
			
			As for $J_{0,1,2}$, using the H\"older inequality, we have
			\begin{align*}
				&\Big(\sum_{j=2}^{k}\frac{1}{(1+\lam |y-x_j^+|)^{n-2s}}+\sum_{j=1}^{k}\frac{1}{(1+\lam |y-x_j^-|)^{n-2s}}\Big)^{2_s^*-1}\\
				\lesssim&
				\sum_{j=2}^{k}\frac{1}{(1+\lam |y-x_j^+|)^{\frac{n+2s}{2}+\tau}}\Big(\sum_{j=2}^{k}\frac{1}{(\lam |x_1^+-x_j^+|)^{\frac{n+2s}{4s}(\frac{n-2s}{2}-\frac{n-2s}{n+2s}\tau)}}\Big)^{\frac{4s}{n-2s}}\\
				&+\sum_{j=1}^{k}\frac{1}{(1+\lam |y-x_j^-|)^{\frac{n+2s}{2}+\tau}}\Big(\sum_{j=1}^{k}\frac{1}{(\lam |x_1^+-x_j^-|)^{\frac{n+2s}{4s}(\frac{n-2s}{2}-\frac{n-2s}{n+2s}\tau)}}\Big)^{\frac{4s}{n-2s}}\\
				\lesssim& \Big(\sum_{j=2}^{k}\frac{1}{(1+\lam |y-x_j^+|)^{\frac{n+2s}{2}+\tau}}+\sum_{j=1}^{k}\frac{1}{(1+\lam |y-x_j^-|)^{\frac{n+2s}{2}+\tau}}\Big)\Big(\frac{1}{\lam }\Big)^{\frac{2s+1-\beta}{2}+\var },
			\end{align*}
			since we can check that $$\frac{n+2s}{4s}\Big(\frac{n-2s}{2}-\frac{n-2s}{n+2s}\tau\Big)>1$$ and \be\label{1}\frac{2s}{n-2s}\frac{n+2s}{n-2s}\Big(\frac{n-2s}{2}-\frac{n-2s}{n+2s}\tau\Big)>\frac{2s+1-\beta}{2}.\ee
			Thus,
			\begin{equation}\label{J01}
				\|J_{0,1}\|_{**}=O\Big(\frac{1}{\lam ^{\frac{2s+1-\beta}{2}+\var }}\Big).
			\end{equation}
			
			Secondly, we will estimate $J_{0,2}$. 
			
			In the region $|(r,y'')-(r_0,y_0'')|\leq\frac{1}{\lam^{\frac{2s+1-\beta}{4}+\var}}$, we have
			
			\begin{align*}
				|J_{0,2}|  =&\Big|\Big(\sum_{|\alpha|=2} \frac{1}{2} D^\alpha K(r_0, y_0'')(y-y_0)^\alpha+o(|y-y_0|^2)\Big) \sum_{j=1}^k (\eta U_{x_j^+,\lam }^{2_s^*-1}+\eta U_{x_j^-,\lam }^{2_s^*-1})\Big| \\
				\lesssim&\frac{1}{\lam^{\frac{2s+1-\beta}{2}+\var}} \Big(\sum_{j=1}^{k}\frac{|\eta|\lam^{\frac{n+2s}{2}}}{(1+\lam |y-x_j^+|)^{n+2s}}+\sum_{j=1}^{k}\frac{|\eta|\lam^{\frac{n+2s}{2}}}{(1+\lam |y-x_j^-|)^{n+2s}}\Big) \\
				\lesssim&\frac{1}{\lam^{\frac{2s+1-\beta}{2}+\var}} \Big(\sum_{j=1}^{k}\frac{\lam^{\frac{n+2s}{2}}}{(1+\lam |y-x_j^+|)^{\frac{n+2s}{2}+\tau}}+\sum_{j=1}^{k}\frac{\lam^{\frac{n+2s}{2}}}{(1+\lam |y-x_j^-|)^{\frac{n+2s}{2}+\tau}}\Big).
			\end{align*}

			On the other hand, in the region $\frac{1}{\lam^{\frac{2s+1-\beta}{4}+\var}} \leq|(r, y'')-(r_0, y_0'')| \leq 2 \delta$, we conclude from \eqref{1.5} that
			$$
			|y-x_j^\pm| \geq|(r, y'')-(r_0, y_0'')|-|(r_0, y_0'')-(\bar{r}, \bar{y}'')| \geq \frac{1}{2 \lam^{\frac{2s+1-\beta}{4}+\var}},
			$$
			which leads to $\frac{1}{1+\lam|y-x_j^\pm|} \lesssim \frac{1}{\lam^{1+\frac{2s+1-\beta}{4}+\var}}$, then
			\begin{align*}
				|J_{0,2}|\lesssim&\sum_{j=1}^{k}\frac{|\eta|\lam^{\frac{n+2s}{2}}}{(1+\lam |y-x_j^+|)^{n+2s}}+\sum_{j=1}^{k}\frac{|\eta|\lam^{\frac{n+2s}{2}}}{(1+\lam |y-x_j^-|)^{n+2s}}\\
				\lesssim&\frac{1}{\lam^{\frac{2s+1-\beta}{2}+\var}}\sum_{j=1}^{k}\frac{\lam^{\frac{2s+1-\beta}{2}+\var}\lam^{\frac{n+2s}{2}}}{(1+\lam |y-x_j^+|)^{n+2s}}+\sum_{j=1}^{k}\frac{\lam^{\frac{2s+1-\beta}{2}+\var}\lam^{\frac{n+2s}{2}}}{(1+\lam |y-x_j^-|)^{n+2s}}\\
				\lesssim&\frac{1}{\lam^{\frac{2s+1-\beta}{2}+\var}}\sum_{j=1}^{k}\frac{\lam^{\frac{n+2s}{2}}}{(1+\lam |y-x_j^+|)^{\frac{n+2s}{2}+\tau}}+\sum_{j=1}^{k}\frac{\lam^{\frac{n+2s}{2}}}{(1+\lam |y-x_j^-|)^{\frac{n+2s}{2}+\tau}},
			\end{align*}
			where we used the fact $$\Big(\frac{n+2s}{2}-\tau\Big)\Big(1+\frac{2s+1-\beta}{4}+\var\Big)>\frac{2s+1-\beta}{2}+\var.$$
			Thus,\begin{equation}\label{J02}
				\|J_{0,2}\|_{**}=O\Big( \frac{1}{\lam ^{\frac{2s+1-\beta}{2}+\var }}\Big).
			\end{equation}
			Combining \eqref{J01} and \eqref{J02}, we have $$\|J_0\|_{* *}=O\Big(\frac{1}{\lam^{\frac{2s+1-\beta}{2}+\var }}\Big).$$
			
			For $J_1$, noting that $\frac{1}{\lam}\leq \frac{C}{1+\lam|y-x_j^\pm|}$ when $|(r,y'')-(r_0,y_0'')| < 2\sigma$, we have
			\begin{align*}
				|J_1|  \lesssim& \sum_{j=1}^k \frac{\eta\lam^{\frac{n-2s}{2}}}{(1+\lam|y-x_j^+|)^{n-2s}} +\sum_{j=1}^k \frac{\eta\lam^{\frac{n-2s}{2}}}{(1+\lam|y-x_j^-|)^{n-2s}}\\
				\lesssim& \frac{1}{\lam^{\frac{2s+1-\beta}{2}+\var}} \Big(\sum_{j=1}^{k}\frac{\lam^{\frac{n+2s}{2}}}{\lam^{\frac{2s-1+\beta}{2}-\var}(1+\lam|y-x_j^+|)^{n-2s}}+\sum_{j=1}^{k}\frac{\lam^{\frac{n+2s}{2}}}{\lam^{\frac{2s-1+\beta}{2}-\var}(1+\lam|y-x_j^-|)^{n-2s}}\Big)\\
				\lesssim& \frac{1}{\lam^{\frac{2s+1-\beta}{2}+\var}} \Big(\sum_{j=1}^{k}\frac{\lam^{\frac{n+2s}{2}}}{(1+\lam|y-x_j^+|)^{\frac{n+2s}{2}+\tau}}+\sum_{j=1}^{k}\frac{\lam^{\frac{n+2s}{2}}}{(1+\lam|y-x_j^-|)^{\frac{n+2s}{2}+\tau}}\Big) .
			\end{align*}
			Since we can check that \be\label{2}n-2s+\frac{2s-1+\beta}{2}-\var>\frac{n+2s}{2}+\tau.\ee Thus,
			$$\|J_1\|_{* *}= O\Big(\frac{1}{\lam^{\frac{2s+1-\beta}{2}+\var }}\Big).$$
			For $J_2$, we have
			\begin{align*}
				J_2 =&\sum_{j=1}^{k}c(n,s)\Big(\lim_{\var \to 0^+} \int_{B_{\frac{\sigma}{4}}(x) \backslash B_\var(x)}\frac{\big(\eta(x)-\eta(y)\big)\big(U_{x_j^+,\lam}(y)+U_{x_j^-,\lam}(y)\big)}{|x-y|^{n+2s}} dy \\
				& + \int_{\Rn\backslash B_{\frac{\sigma}{4}}(x)} \frac{\big(\eta(x)-\eta(y)\big)\big(U_{x_j^+,\lam}(y)+U_{x_j^-,\lam}(y)\big)}{|x-y|^{n+2s}} \,\d y\Big)\\
				=&\sum_{j=1}^{k}c(n,s)(J_{21}+J_{22}).
			\end{align*}
			Now we estimate the term $J_{21}$. From the definition of the function $\eta$, we have $\eta(x)-\eta(y)=0$ when $x,y\in B_\sigma((r_0,y''_0))$ or $x,y\in\Rn\backslash B_{2\sigma}((r_0,y''_0))$. So, $J_{21}\neq 0$ only if $B_{\frac{\sigma}{4}}(x)\subset B_{\frac{5}{2}\sigma}((r_0,y''_0))\backslash B_{\frac{1}{2}\sigma}((r_0,y''_0))$. Since $x_j^{\pm}\to(r_0,y''_0)$, we only consider the case
			\begin{equation*}
				\frac{\sigma}{2} \leq|x-x_j^{\pm}|\leq \frac{5\sigma}{2} \quad \text{ and } \quad \frac{\sigma}{4} \leq|y-x_j^{\pm}|\leq \frac{11\sigma}{4}.
			\end{equation*}
			Then we have $\frac{1}{10}|x-x_j^{\pm}|\leq|y-x_j^{\pm}|\leq\frac{11}{2}|x-x_j^{\pm}|$.
			
			Furthermore,
			\begin{equation*}
				\begin{aligned}
					&\lim_{\var \to 0^+} \int_{B_{\frac{\sigma}{4}}(x) \backslash B_\var(x)}\frac{(\eta(x)-\eta(y)\big)U_{x_j^{\pm},\lam}(y)}{|x-y|^{n+2s}}\, \d y\\
					=&\lim_{\var \to 0^+} \int_{B_{\frac{\sigma}{4}}(x) \backslash B_\var(x)}\frac{\langle\nabla \eta(x),x-y\rangle U_{x_j^{\pm},\lam}(y)}{|x-y|^{n+2s}} \,\d y
					+O\Big(\lim_{\var \to 0^+} \int_{B_{\frac{\sigma}{4}}(x) \backslash B_\var(x)}\frac{U_{x_j^{\pm},\lam}(y)}{|x-y|^{n+2s-2}} \,\d y\Big).
				\end{aligned}
			\end{equation*}
			By the mean value theorem and symmetry, there exists a constant $0<\theta<1$ such that
			\begin{align}
				&\Big|\lim_{\var \to 0^+} \int_{B_{\frac{\sigma}{4}}(x) \backslash B_\var(x)}\frac{\langle\nabla \eta(x),x-y\rangle U_{x_j^{\pm},\lam}(y)}{|x-y|^{n+2s}}\,\d y\Big|\notag\\
				=&\Big|\lim_{\var \to 0^+} \int_{B_{\frac{\sigma}{4}}(0) \backslash B_\var(0)}\frac{\langle\nabla \eta(x),z\rangle}{|z|^{n+2s}} \frac{\lam^{\frac{n-2s}{2}}}{(1+\lam^{2}|z+x-x^{\pm}_j|^{2})^{\frac{n-2s}{2}}}\,\d z\Big|\notag\\
				\lesssim& \lam^{\frac{n-2s}{2}+1}\int_{B_{\frac{\sigma}{4}}(0)}\frac{|\nabla \eta(x)|}{|z|^{n+2s-2}} \frac{1}{(1+\lam|(2\theta-1)z+x-x^{\pm}_j|)^{n-2s+1}}\,\d z\notag\\
				\lesssim& \frac{1}{\lam^{\frac{2s+1-\beta}{2}+\var}}\frac{\lam^{\frac{n+2s}{2}}}{\lam^{2s-1+\frac{-2s-1+\beta}{2}-\var}(1+\lam|x-x^{\pm}_j|)^{n-2s+1}}\notag\\
				\lesssim&\label{2.5} \frac{1}{\lam^{\frac{2s+1-\beta}{2}+\var}}\frac{\lam^{\frac{n+2s}{2}}}{(1+\lam|x-x^{\pm}_j|)^{\frac{n+2s}{2}+\tau}},
			\end{align}
			where we used the fact that $$|(2\theta-1)z+x-x^+_j|\geq|x-x^{\pm}_j|-|(2\theta-1)z|\geq\frac{1}{10}|x-x^{\pm}_j|\quad\text{ for }z\in B_{\frac{\sigma}{4}(0)},$$ $$\frac{1}{\lam }\leq \frac{C}{1+\lam |y-x_j^\pm|}\quad\text{ when }|(r,y'')-(r_0,y_0'')| < 2\sigma,$$ and $$n+\frac{-2s-1+\beta}{2}-\var>\frac{n+2s}{2}+\tau.$$
			
			Similar to \eqref{2.5}, we can obtain
			\begin{align*}
				&\Big|\lim_{\var \to 0^+} \int_{B_{\frac{\sigma}{4}}(x) \backslash B_\var(x)}\frac{U_{x_j^{\pm},\lam}(y)}{|x-y|^{n+2s-2}}\,\d y\Big|\\
				\lesssim&\int_{B_{\frac{\sigma}{4}}(x)}\frac{1}{|x-y|^{n+2s-2}}\frac{\lam^{\frac{n-2s}{2}}}{(1+\lam|y-x^{\pm}_j|)^{n-2s}}\,\d y\\
				\lesssim& \frac{1}{\lam^{\frac{2s+1-\beta}{2}+\var}}\int_{B_{\frac{\sigma}{4}}(x)}
				\frac{1}{|x-y|^{n+2s-2}}\frac{\lam^{\frac{n+2s}{2}}}{(1+\lam|y-x^{\pm}_j|)^{n-\frac{2s+1-\beta}{2}-\var}}\,\d y\\
				\lesssim& \frac{1}{\lam^{\frac{2s+1-\beta}{2}+\var}}\frac{\lam^{\frac{n+2s}{2}}}{(1+\lam|x-x^{\pm}_j|)^{\frac{n+2s}{2}+\tau}}.
			\end{align*}
			Hence,
			\begin{align*}
				|J_{21}|=&\Big|\lim_{\var \to 0^+} \int_{B_{\frac{\sigma}{4}}(x) \backslash B_\var(x)}\frac{\big(\eta(x)-\eta(y)\big)\big(U_{x_j^+,\lam}(y)+U_{x_j^-,\lam}(y)\big)}{|x-y|^{n+2s}}\,\d y\Big|\\
				\lesssim& \frac{1}{\lam^{\frac{2s+1-\beta}{2}+\var}}\Big(\frac{\lam^{\frac{n+2s}{2}}}{(1+\lam|x-x^+_j|)^{\frac{n+2s}{2}+\tau}}+\frac{\lam^{\frac{n+2s}{2}}}{(1+\lam|x-x^-_j|)^{\frac{n+2s}{2}+\tau}}\Big).
			\end{align*}
			As for the term $J_{22}$, we divide it into the following three cases:
			
			In the case of $x\in B_\sigma(x^+_j)$, we have
			\begin{align*}
				&\Big|\int_{\Rn\backslash B_{\frac{\sigma}{4}}(x)}\frac{\big(\eta(x)-\eta(y)\big)U_{x_j^+,\lam}(y)}{|x-y|^{n+2s}}\,\d y\Big|\\
				\lesssim&\int_{\Rn\backslash \big(B_{\frac{\sigma}{4}}(x)\cup B_\sigma((r_0,y''_0))\big)} \frac{1}{|x-y|^{n+2s}}\frac{\lam^{\frac{n-2s}{2}}}{(1+\lam|y-x^+_j|)^{n-2s}}\,\d y\\
				\lesssim& \frac{1}{\lam^{2s}}\frac{\lam^{\frac{n+2s}{2}}}{(1+\lam|x-x^+_j|)^{n-2s}}
				\int_{\Rn\backslash B_{\frac{\sigma}{4}}(x)}\frac{1}{|x-y|^{n+2s}}\,\d y\\
				\lesssim& \frac{1}{\lam^{\frac{2s+1-\beta}{2}+\var}}\frac{\lam^{\frac{n+2s}{2}}}{(1+\lam|x-x^+_j|)^{\frac{n+2s}{2}+\tau}}.
			\end{align*}
			
			In the case of $x\in B_{3\sigma}(x^+_j)\textbackslash B_\sigma(x^+_j)$, using Lemma \ref{lemC.1}, then there holds
			\begin{align*}
				&\Big|\int_{\Rn\backslash B_{\frac{\sigma}{4}}(x)} \frac{(\eta(x)-\eta(y))U_{x_j^+,\lam}(y)}{|x-y|^{n+2s}}\,\d y\Big|\\
				\lesssim&\int_{\Rn\backslash B_{\frac{\sigma}{4}}(x)} \frac{1}{|x-y|^{n+2s}} \frac{\lam^{\frac{n-2s}{2}}}{(1+\lam|y-x^+_j|)^{n-2s}}\,\d y\\
				\lesssim& \lam^{\frac{n+2s}{2}}\int_{\Rn\backslash B_{\frac{\sigma\lam}{4}}(\lam x)} \frac{1}{|z-\lam x|^{n+2s}} \frac{1}{(1+|z-\lam x^+_j|)^{n-2s}}\,\d z\\
				\lesssim&\lam^{\frac{n+2s}{2}}\Big(\frac{1}{(\lam|x-x^+_j|)^n}+\frac{1}{\lam^{2s}}\frac{1}{(\lam|x-x^+_j|)^{n-2s}}\Big)\\
				\lesssim& \frac{1}{\lam^{\frac{2s+1-\beta}{2}+\var}}\frac{\lam^{\frac{n+2s}{2}}}{(1+\lam|x-x^+_j|)^{\frac{n+2s}{2}+\tau}},
			\end{align*}
			where we used $\frac{C_1}{1+\lam|x-x_j^+|}\leq\frac{1}{\lam}\leq\frac{C_2}{1+\lam|x-x_j^+|}$ for $\sigma\leq|x -x^+_j|\leq3\sigma$.
			
			In the case of $x\in \Rn\textbackslash B_{3\sigma}(x^+_j)$. Note that $|x-y|\geq|x-x^+_j|-|y-x^+_j|\geq\frac{1}{3}|x-x^+_j|$ when $|x-x^+_j|>3\sigma$ and $|y-x^+_j|\leq2\sigma$, then we have
			\begin{align*}
				&\Big|\int_{\Rn\backslash B_{\frac{\sigma}{4}}(x)} \frac{(\eta(x)-\eta(y))U_{x_j^+,\lam}(y)}{|x-y|^{n+2s}}\,\d y\Big|\\
				\lesssim&\int_{B_{2\sigma(x_j^+)}} \frac{1}{|x-y|^{n+2s}} \frac{\lam^{\frac{n-2s}{2}}}{(1+\lam|y-x^+_j|)^{n-2s}}\,\d y\\
				\lesssim& \frac{1}{\lam^{\frac{n-2s}{2}}} \int_{B_{2\sigma(x_j^+)}} \frac{1}{|x-y|^{n+2s}} \frac{1}{(|y- x^+_j|)^{n-2s}}\,\d y\\
				\lesssim& \frac{1}{\lam^{\frac{2s+1-\beta}{2}+\var}}\frac{\lam^{\frac{n+2s}{2}}}{(1+\lam|x-x^+_j|)^{\frac{n+2s}{2}+\tau}},
			\end{align*}
			where we used $\frac{1}{|x-y|}\lesssim\frac{\lam}{1+\lam|x-x^+_j|}$.

			Hence,
			\begin{align*}
				\Big|\int_{\Rn\backslash B_{\frac{\sigma}{4}}(x)} \frac{\big(\eta(x)-\eta(y)\big)U_{x_j^+,\lam}(y)}{|x-y|^{n+2s}}\,\d y\Big|
				\lesssim \frac{1}{\lam^{\frac{2s+1-\beta}{2}+\var}}\frac{\lam^{\frac{n+2s}{2}}}{(1+\lam|x-x^+_j|)^{\frac{n+2s}{2}+\tau}}.
			\end{align*}
			
			Similarly, we can deduce that
			\begin{align*}
				\Big|\int_{\Rn\backslash B_{\frac{\sigma}{4}}(x)} \frac{(\eta(x)-\eta(y))U_{x_j^-,\lam}(y)}{|x-y|^{n+2s}} \,\d y\Big|
				\lesssim \frac{1}{\lam^{\frac{2s+1-\beta}{2}+\var}}\frac{\lam^{\frac{n+2s}{2}}}{(1+\lam|x-x^-_j|)^{\frac{n+2s}{2}+\tau}}.
			\end{align*}
			So, we obtain
			\begin{equation*}
				\begin{aligned}
					\|J_{2}\|_{**}\lesssim \frac{1}{\lam^{\frac{2s+1-\beta}{2}+\var}}.
				\end{aligned}
			\end{equation*}
			
			As a result, we have proved that
			\begin{equation*}
				\|l_k\|_{**}\lesssim \frac{1}{\lam^{\frac{2s+1-\beta}{2}+\var}}.
			\end{equation*}
		\end{proof}
		\begin{rem}
			The assumption \eqref{0} guarantees the validity of \eqref{1} and \eqref{2}, which ensures the existence of $\nu$.
		\end{rem}
		The solvability theory for the linearized problem \eqref{2.22} can be provided in the following:
		\begin{prop}\label{prop2.5}
			Suppose that $n\geq 4$ and $s$ satisfies \eqref{0}, there exists a positive large integer $k_0$, such that for all $k\geq k_0$ and $\lam  \in [L_0 k^{\frac{n-2s}{n-4s-\al }},L_1 k^{\frac{n-2s}{n-4s-\al }}]$, $(\bar{r},\bar{h},\bar{y}'')$ satisfies \eqref{1.5}, where $L_0,L_1$ are fixed constants, problem \eqref{2.21} has a unique solution $\varphi=\varphi_{\bar{r},\bar{h},\bar{y}'',\lam }$ satisfying
			\be \label{2.28}
			\|\varphi\|_{*} \lesssim \frac{1}{\lam ^{\frac{2s+1-\beta}{2}+\var }}, \quad |c_l|\lesssim \frac{1}{\lam ^{\frac{2s+1-\beta}{2}+n_l+\var }},
			\ee
			where $\var >0$ is a small constant.
		\end{prop}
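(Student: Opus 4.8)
The plan is to solve the nonlinear problem \eqref{2.22} by a fixed-point argument, using the linear theory established in Lemma \ref{lem2.2} together with the estimates for the nonlinear term $N(\varphi)$ from Lemma \ref{lem2.3} and for the error term $l_k$ from Lemma \ref{lem2.4}. First I would recast \eqref{2.22} as a fixed-point equation: by Lemma \ref{lem2.2}, for each right-hand side $g\in L^\infty(\Rn)$ there is a unique $\mathcal{L}_k(g)\in\mathbb{H}$ solving the linearized equation with the orthogonality constraints, and $\|\mathcal{L}_k(g)\|_*\lesssim\|g\|_{**}$. Hence solving \eqref{2.22} is equivalent to finding a fixed point of the map
\begin{equation*}
\mathcal{A}(\varphi)=\mathcal{L}_k\big(N(\varphi)+l_k\big)
\end{equation*}
in the ball $\mathcal{S}=\{\varphi\in\mathbb{H}:\|\varphi\|_*\leq \lam^{-\frac{2s+1-\beta}{2}-\var}\}$ for a suitably small $\var>0$ (shrinking the $\var$ in Lemma \ref{lem2.4} if necessary).

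Next I would verify that $\mathcal{A}$ maps $\mathcal{S}$ into itself. For $\varphi\in\mathcal{S}$, Lemma \ref{lem2.4} gives $\|l_k\|_{**}\lesssim\lam^{-\frac{2s+1-\beta}{2}-\var}$, while Lemma \ref{lem2.3} gives
\begin{equation*}
\|N(\varphi)\|_{**}\lesssim\lam^{\frac{4s\tau}{n-2s}}\|\varphi\|_*^{\min(2_s^*-1,2)}\lesssim\lam^{\frac{4s\tau}{n-2s}}\lam^{-\min(2_s^*-1,2)(\frac{2s+1-\beta}{2}+\var)},
\end{equation*}
so applying Lemma \ref{lem2.2} one gets $\|\mathcal{A}(\varphi)\|_*\lesssim\lam^{-\frac{2s+1-\beta}{2}-\var}+\lam^{\frac{4s\tau}{n-2s}-\min(2_s^*-1,2)(\frac{2s+1-\beta}{2}+\var)}$. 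To close the estimate I must check that the exponent of the nonlinear contribution is strictly smaller than $-\frac{2s+1-\beta}{2}-\var$; this reduces to an inequality among $n$, $s$, $\tau$, $\beta$ which I expect to follow from $n>4s+1$, the choice $\tau=\frac{n-4s-\al}{2(n-2s-\al)}$, and hypothesis \eqref{0} (exactly the inequalities flagged in the remarks, ensuring a valid $\nu$), after possibly taking $k_0$ larger and $\var$ smaller. For the contraction property I would similarly estimate $\|N(\varphi_1)-N(\varphi_2)\|_{**}$: using $|N'(\varphi_1)-N'(\varphi_2)|\lesssim(|\varphi_1|+|\varphi_2|)^{\min(2_s^*-2,1)}$-type pointwise bounds one obtains $\|N(\varphi_1)-N(\varphi_2)\|_{**}\lesssim\lam^{\frac{4s\tau}{n-2s}}(\|\varphi_1\|_*+\|\varphi_2\|_*)^{\min(2_s^*-2,1)}\|\varphi_1-\varphi_2\|_*$, whence $\|\mathcal{A}(\varphi_1)-\mathcal{A}(\varphi_2)\|_*\leq\frac12\|\varphi_1-\varphi_2\|_*$ for $k$ large. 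By the contraction mapping theorem $\mathcal{A}$ has a unique fixed point $\varphi=\varphi_{\bar r,\bar h,\bar y'',\lam}\in\mathcal{S}$, which solves \eqref{2.21} with some constants $c_l$; the first estimate in \eqref{2.28} is then immediate.

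Finally, the bound on $c_l$ follows from the second conclusion of Lemma \ref{lem2.2} applied to $g=N(\varphi)+l_k$: $|c_l|\lesssim\lam^{-n_l}\|N(\varphi)+l_k\|_{**}\lesssim\lam^{-n_l}\lam^{-\frac{2s+1-\beta}{2}-\var}$, which is exactly $|c_l|\lesssim\lam^{-\frac{2s+1-\beta}{2}-n_l-\var}$. The main obstacle I anticipate is not the functional-analytic scheme, which is standard, but the bookkeeping of the exponents: one must confirm that the gain $\lam^{\frac{4s\tau}{n-2s}}$ coming from the overlap sums in Lemma \ref{lem2.3} is more than compensated by the power $\min(2_s^*-1,2)>1$ of $\|\varphi\|_*$, uniformly in the admissible range of $\lam\sim k^{\frac{n-2s}{n-4s-\al}}$ and for $\bar h$ satisfying \eqref{1.5}; this is precisely where the structural hypothesis \eqref{0} on $s$ and the smallness of $\nu$ (hence of $\al=1-\nu$ and $\beta=\frac{\al}{n-2s}$) enter, and it must be tracked with some care since $\tau$ itself depends on $\al$.
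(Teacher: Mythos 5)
Your proposal is correct and follows essentially the same route as the paper: recast \eqref{2.22} as the fixed-point equation $\varphi=\mathcal{L}_k(N(\varphi))+\mathcal{L}_k(l_k)$, show $\mathcal{A}$ is a contraction on a small ball in $\|\cdot\|_*$ using Lemmas \ref{lem2.2}--\ref{lem2.4}, and read off the $c_l$ bound from the second conclusion of Lemma \ref{lem2.2}. The only cosmetic difference is that the paper contracts on the slightly larger ball $\|\varphi\|_*\leq\lam^{-\frac{2s+1-\beta}{2}}$ and recovers the extra $\lam^{-\var}$ a posteriori from the fixed-point identity, while you build the $\var$ into the ball from the start; the key exponent inequality you flag is exactly the one the paper invokes, namely $\frac{4s\tau}{n-2s}<\min(s,(2_s^*-2)s)$.
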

		\begin{proof}
			We first denote
			$$
			\mathbb{E}=\Big\{u:u \in C(\Rn)\cap\mathbb{H},~\|u\|_*\leq \frac{1}{\lam ^{\frac{2s+1-\beta}{2}}}\Big\}.
			$$
			Then \eqref{2.22} is equivalent to 
			\be \label{2.30}
			\varphi=\mathcal{A}(\varphi)=\mathcal{L}_k(N(\varphi))+\mathcal{L}_k(l_k),
			\ee
			where $\mathcal{L}_k$ is the linear bounded operator defined in Lemma \ref{lem2.2}. We will prove that $\mathcal{A}$ is a contraction map from $\mathbb{E}$ to $\mathbb{E}$. 
			
			For any $\varphi \in \mathbb{E}$, by Lemma \ref{lem2.2}, Lemma \ref{lem2.3} and Lemma \ref{lem2.4}, we have
			\begin{equation*}
				\begin{aligned}
					\|\mathcal{A}\|_* \lesssim&\|\mathcal{L}_k(N(\varphi))\|_*+C\|\mathcal{L}_k(l_k)\|_*\\
					\lesssim&(\|N(\varphi)\|_{**}+\|l_k\|_{**})\\
					\lesssim&\lam^{\frac{4s\tau}{n-2s}} \|\varphi\|_{*}^{\min(2_s^*-1,2)}+\frac{1}{\lam ^{{\frac{2s+1-\beta}{2}}+\var }}
					\lesssim\frac{1}{\lam ^{{\frac{2s+1-\beta}{2}}}},
				\end{aligned}
			\end{equation*}
			since $\frac{4s}{n-2s}\tau<\min(s,(2_s^*-2)s)$. Hence,  $\mathcal{A}$ maps $\mathbb{E}$ to $\mathbb{E}$.
			
			On the other hand, for all $\varphi_1,\varphi_2 \in \mathbb{E}$, we have
			$$
			\|\mathcal{A}(\varphi_1)-\mathcal{A}(\varphi_2)\|_*=\|\mathcal{L}_k(N(\varphi_1))-\mathcal{L}_k(N(\varphi_2))\|_* \lesssim\|N(\varphi_1)-N(\varphi_2)\|_{**}.
			$$
			If $2_s^*\leq3$, using H\"older inequality like \eqref{b2.23}, then we have
			\begin{align*}
				|N(\varphi_1)-(N(\varphi_2))| \lesssim&(|\varphi_1|^{2_s^*-2}+|\varphi_2|^{2_s^*-2})|\varphi_1-\varphi_2|\\
				\lesssim&(\|\varphi_1\|_*^{2_s^*-2}+\|\varphi_2\|_*^{2_s^*-2})\|\varphi_1-\varphi_2\|_*\\
				& \times\Big(\sum_{j=1}^{k}\frac{\lam ^{\frac{n-2s}{2}}}{(1+\lam |y-x_j^+|)^{\frac{n-2s}{2}+\tau}}+\sum_{j=1}^{k}\frac{\lam ^{\frac{n-2s}{2}}}{(1+\lam |y-x_j^-|)^{\frac{n-2s}{2}+\tau}}\Big)^{2_s^*-1}\\
				\lesssim&(\|\varphi_1\|_*^{2_s^*-2}+\|\varphi_2\|_*^{2_s^*-2})\|\varphi_1-\varphi_2\|_*\\
				& \times\Big(\sum_{j=1}^{k}\frac{\lam ^{\frac{n+2s}{2}}}{(1+\lam |y-x_j^+|)^{\frac{n+2s}{2}+\tau}}+\sum_{j=1}^{k}\frac{\lam ^{\frac{n+2s}{2}}}{(1+\lam |y-x_j^-|)^{\frac{n+2s}{2}+\tau}}\Big).
			\end{align*}
			Thus,
			$$
			\|\mathcal{A}(\varphi_1)-\mathcal{A}(\varphi_2)\|_* \lesssim(\|\varphi_1\|_*^{2_s^*-2}+\|\varphi_2\|_*^{2_s^*-2})\|\varphi_1-\varphi_2\|_*\leq \frac{1}{2}\|\varphi_1-\varphi_2\|_*.
			$$
			Therefore, $\mathcal{A}$ is a contraction map from $\mathbb{E}$ to $\mathbb{E}$. The proof for the case $2_s^*>3$ is similar.
			
			By the contraction mapping theorem, there exists a unique $\varphi=\varphi_{\bar{r},\bar{h},\bar{y}'',\lam } \in \mathbb{E}$ such that \eqref{2.30} holds. Furthermore, according to Lemma \ref{lem2.2}, Lemma \ref{lem2.3} and Lemma \ref{lem2.4}, we deduce
			\begin{equation*}
				\|\varphi\|_* \leq \|\mathcal{L}_k(N(\varphi))\|_*+\|\mathcal{L}_k(l_k)\|_* \lesssim(\|N(\varphi)\|_{**}+\|l_k\|_{**})\lesssim \frac{1}{\lam ^{{\frac{2s+1-\beta}{2}}+\var }}
			\end{equation*}
			and
			\begin{equation*}
				|c_l|\lesssim\frac{1}{\lam ^{n_l}}\|N(\varphi)+l_k\|_{**} \lesssim \frac{1}{\lam ^{{\frac{2s+1-\beta}{2}}+n_l+\var }},
			\end{equation*}
			for $l=2,\ldots,n$.
		\end{proof}
		\section{Expansions for the energy functional}\label{sec:3}
		This section is devoted to computing the partial derivatives of the energy functional  $I(Z_{\bar{r},\bar{h},\bar{y}'',\lam })$ about $\lam$, $\bar{r}$ and $y_j''$ $(j=4,\ldots,n)$, respectively.
		We first recall that
		$$
		I(u)=\frac{1}{2} \int\big(|(-\Delta)^{\frac{s}{2}} u|^2+V(y)u^2\big)\,\d y-\frac{1}{2_s^*} \int K(y)(u)_+^{2_s^*}\,\d y .
		$$
		\begin{lem}\label{lemB.1}
			If $n>4s+1$, then
			\begin{equation*}
				\begin{aligned}
					\frac{\pa  I(Z_{\bar{r},\bar{h},\bar{y}'',\lam })}{\pa  \lam }
					=& 2k\Big(-\frac{sB_1}{\lam ^{2s+1}} + \sum_{j=2}^{k}\frac{B_2}{\lam ^{n-2s+1}|x_j^+-x_1^+|^{n-2s}} 
					+\sum_{j=1}^{k}\frac{B_2}{\lam ^{n-2s+1}|x_j^--x_1^+|^{n-2s}} \\&+ O\Big(\frac{1}{\lam ^{2s+1+\var }}\Big)\Big),
				\end{aligned}
			\end{equation*}
			where $B_1$ and $B_2$ are two positive constants.
		\end{lem}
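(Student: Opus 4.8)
The plan is to compute $\pa_\lam I(Z)$ (abbreviating $Z=Z_{\bar r,\bar h,\bar y'',\lam}$) through the chain rule and to isolate its two leading contributions, one coming from the potential $V$ and one from the mutual interaction of the $2k$ bubbles, which turn out to be of the \emph{same} order $\lam^{-(2s+1)}$. Since $Z=\sum_{j=1}^k(\eta U_{x_j^+,\lam}+\eta U_{x_j^-,\lam})$ and, recalling \eqref{1.5}, the parameters are coupled, differentiating gives $\pa_\lam Z=\sum_{j=1}^k(Z_{j,2}^++Z_{j,2}^-)$, where by \eqref{2.9} one has $Z_{j,2}^\pm=O(\lam^{-\beta}Z_{x_j^\pm,\lam})$. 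Because $V$ and $K$ are invariant under the rotations by multiples of $2\pi/k$ in the $(y_1,y_2)$-plane and under $y_3\mapsto-y_3$, while $Z$ and $(-\Delta)^sZ+VZ-KZ_+^{2_s^*-1}$ are invariant under the same transformations (the reflection exchanging $x_j^+$ with $x_j^-$), all $2k$ of the numbers $\langle I'(Z),Z_{j,2}^+\rangle$, $\langle I'(Z),Z_{j,2}^-\rangle$ $(1\le j\le k)$ coincide with $\langle I'(Z),Z_{1,2}^+\rangle$, so that
\[
\frac{\pa I(Z)}{\pa\lam}=\langle I'(Z),\pa_\lam Z\rangle=2k\,\langle I'(Z),Z_{1,2}^+\rangle .
\]
Since $l_k$ is defined (see Section~\ref{sec:2}) so that $(-\Delta)^sZ+VZ-KZ_+^{2_s^*-1}=-l_k$, with $l_k=J_0-J_1-J_2$, $J_1=VZ$, and $J_0=J_{0,1}+J_{0,2}$ as in the proof of Lemma~\ref{lem2.4}, one gets $\langle I'(Z),Z_{1,2}^+\rangle=-\int l_k\,Z_{1,2}^+$, and it remains to expand $\int J_1Z_{1,2}^+$, $\int J_0Z_{1,2}^+$ and $\int J_2Z_{1,2}^+$.

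For the potential part, $\int J_1Z_{1,2}^+=\int VZ\,Z_{1,2}^+=\frac{1}{4k}\pa_\lam\!\int VZ^2$; evaluating $\int VZ^2$ with Lemma~\ref{lemA.1} and the scaling $y=x_j^\pm+z/\lam$, the diagonal terms contribute $2k\,V(r_0,y_0'')\big(\int_{\Rn}U_{0,1}^2\,\d y\big)\lam^{-2s}\,(1+o(1))$ (finite because $n>4s$), while the cross terms and the $\nabla\eta$ contributions are smaller by a power of $\lam$ thanks to \eqref{b2.14}--\eqref{b2.14.1}; differentiating in $\lam$ yields $-sB_1\lam^{-(2s+1)}$ with $B_1=V(r_0,y_0'')\int_{\Rn}U_{0,1}^2\,\d y$, and here $B_1>0$ precisely because $V(r_0,y_0'')>0$ (hypothesis $(K_3)$). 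For the interaction part one expands $J_{0,1}=K\big(Z^{2_s^*-1}-\sum_j(\eta U_{x_j^+}^{2_s^*-1}+\eta U_{x_j^-}^{2_s^*-1})\big)$; near $x_1^+$, where $\eta\equiv1$, its leading part is $(2_s^*-1)U_{x_1^+,\lam}^{2_s^*-2}\sum_{(i,\pm)\neq(1,+)}U_{x_i^\pm,\lam}$, and combining $K(r_0,y_0'')=1$, the identity $\int U_{x_1^+}^{2_s^*-2}(\pa_\lam U_{x_1^+})\,U_{x_i^\pm}=\frac{1}{2_s^*-1}\big(\pa_\lam\!\int U_{x_1^+}^{2_s^*-1}U_{x_i^\pm}-\int U_{x_1^+}^{2_s^*-1}\pa_\lam U_{x_i^\pm}\big)$, the classical two-bubble estimate (which gives $\int U_{x_1^+,\lam}^{2_s^*-1}U_{x_i^\pm,\lam}$ equal to a positive constant times $(\lam|x_1^+-x_i^\pm|)^{-(n-2s)}$ up to $1+o(1)$), and the fact that $|x_1^+-x_i^+|$ is proportional to $\lam^{-\beta}$, with $|x_1^+-x_i^-|$ treated in the same way, one arrives at
\[
-\int J_0Z_{1,2}^+=\sum_{j=2}^k\frac{B_2}{\lam^{n-2s+1}|x_j^+-x_1^+|^{n-2s}}+\sum_{j=1}^k\frac{B_2}{\lam^{n-2s+1}|x_j^--x_1^+|^{n-2s}}+O\Big(\frac{1}{\lam^{2s+1+\var}}\Big),
\]
with $B_2>0$.

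The remaining task --- the heart of the proof --- is to absorb everything else into $O(\lam^{-(2s+1)-\var})$. The term $J_{0,2}=(K-1)\sum_j\eta(U_{x_j^+}^{2_s^*-1}+U_{x_j^-}^{2_s^*-1})$ is handled using that $(r_0,y_0'')$ is a critical point of $K$, so that $K-1=O(|(r,y'')-(r_0,y_0'')|^2)$ near the bubbles, together with the cancellations $\int U_{0,1}^{2_s^*-1}\pa_\lam U_{0,1}=0$ and $\int U_{0,1}^{2_s^*-1}\nabla U_{0,1}=0$ (both because $\int U^{2_s^*}$ is invariant under dilations and translations), so that only the $O(\theta_k^2)$- and $O(|y-x_j^\pm|^2)$-corrections remain; the cutoff commutator $J_2$ is estimated exactly as in the proof of Lemma~\ref{lem2.4}, using that it is effectively localized where $\nabla\eta\neq0$, far from every $x_j^\pm$, where $Z_{1,2}^+$ is negligible; and the higher-order terms in the binomial expansion of $Z^{2_s^*-1}$ are controlled through the weighted norms with Lemmas~\ref{lemA.2}--\ref{lemA.4}. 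The genuine difficulty is that the two main terms are of the same order $\lam^{-(2s+1)}$ --- this is precisely what will make the equation $\pa_\lam F=0$ fix $\lam\sim k^{(n-2s)/(n-4s-\al)}$ --- so the expansion of $\langle I'(Z),Z_{1,2}^+\rangle$ must be carried out sharply; in particular, extracting $\pa_\lam$ of the interaction integrals forces one to follow the $\lam$-dependence of $|x_1^+-x_i^\pm|$ carried by $\bar h$ through \eqref{1.5}, and the residual terms beat $\lam^{-(2s+1)}$ only by the power $\var$, which is again why the restriction \eqref{0} on $s$ (equivalently, the room to choose $\nu$, hence $\var$, strictly positive) is indispensable, exactly as in Lemma~\ref{lem2.4}. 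Assembling the three steps and inserting the overall factor $2k$ gives the asserted formula.
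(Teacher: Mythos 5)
Your proposal is correct and follows essentially the same route as the paper: the same symmetry reduction to a single bubble, the same three-way split into the $V$-term (giving $B_1=V(r_0,y_0'')\int U_{0,1}^2$), the $(1-K)$-term (killed by the criticality of $K$ at $(r_0,y_0'')$), and the two-bubble interaction term (giving $B_2$), with the cutoff errors absorbed into the remainder. The only cosmetic difference is that you recycle the decomposition $l_k=J_0-J_1-J_2$ from Lemma \ref{lem2.4} and treat the commutator $J_2$ directly, whereas the paper first replaces $Z_{\bar r,\bar h,\bar y'',\lam}$ by the uncut sum $Z^*_{\bar r,\bar h,\bar y'',\lam}$ at a cost of $O(k\lam^{-2s-1-\var})$ and then writes $I_1+I_2-I_3$; these are equivalent.
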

		
		\begin{proof}
			Note that for any $\varepsilon>0$, $Z_{i,2}^\pm=O(\lam ^{-1} Z_{x_i^\pm,\lam })$ in $B^c_\varepsilon(x_i^\pm)$. A direct computation leads to
			\begin{align*}
				\frac{\pa  I(Z_{\bar{r},\bar{h},\bar{y}'',\lam })}{\pa  \lam }
				=&\frac{\pa  I(Z_{\bar{r},\bar{h},\bar{y}'',\lam }^{*})}{\pa  \lam }+O\Big(\frac{k}{\lam  ^{2 s+1+\varepsilon}}\Big)\\
				=& \int V(y)Z_{\bar{r},\bar{h},\bar{y}'',\lam }^*\frac{\pa  Z_{\bar{r},\bar{h},\bar{y}'',\lam }^{*}}{\pa  \lam }+\int(1-K(r,y''))(Z_{\bar{r},\bar{h},\bar{y}'',\lam }^*)^{2_s^*-1}\frac{\pa  Z_{\bar{r},\bar{h},\bar{y}'',\lam }^{*}}{\pa  \lam }\\&-\int\Big((Z_{\bar{r},\bar{h},\bar{y}'',\lam }^*)^{2_s^*-1}-\sum_{j=1}^{k}U_{x_j^+,\lam }^{2_s^*-1}-\sum_{j=1}^{k}U_{x_j^-,\lam }^{2_s^*-1}\Big)\frac{\pa  Z_{\bar{r},\bar{h},\bar{y}'',\lam }^{*}}{\pa  \lam }+O\Big(\frac{k}{\lam  ^{2 s+1+\varepsilon}}\Big)\\
				=&I_1+I_2-I_3+O\Big(\frac{k}{\lam ^{2 s+1+\var}}\Big).
			\end{align*}
			By Lemma \ref{lemA.1}, we can check that
			\begin{align*}
				{I}_1=&V(r_0,y_0'')\int Z_{\bar{r},\bar{h},\bar{y}'',\lam}^{*}\frac{\pa Z_{\bar{r},\bar{h},\bar{y}'',\lam}^{*}}{\pa \lam}+\int (V(y)-V(r_0,y_0''))Z_{\bar{r},\bar{h},\bar{y}'',\lam}^{*}\frac{\pa Z_{\bar{r},\bar{h},\bar{y}'',\lam}^{*}}{\pa \lam}\\
				=&2k\Big(V(r_0,y_0'')\int U_{x_1^+,\lam }\frac{\pa  U_{x_1^+,\lam }}{\pa  \lam }+V(r_0,y_0'')\int \frac{\pa  U_{x_1^+,\lam }}{\pa  \lam }\Big(\sum_{j=2}^{k}U_{x_j^+,\lam }+\sum_{j=1}^{k}U_{x_j^-,\lam }\Big)
				+O\Big(\frac{1}{\lam ^{2s+1+\var }}\Big)\Big)\\
				=&2k\Big(\frac{V(r_0,y_0'')}{2}\frac{\pa }{\pa  \lam }\int U^2_{x_1^+,\lam } +O\Big(\frac{1}{\lam ^{2s+\beta}}\Big(\sum_{j=2}^{k} \frac{1}{(\lam |x_j^+-x_1^+|)^{n-4s-\theta_1}}\\
				&+\sum_{j=1}^{k} \frac{1}{(\lam |x_j^--x_1^+|)^{n-4s-\theta_1}}\Big)\Big)+O\Big(\frac{1}{\lam ^{2s+1+\var }}\Big)\Big)\\
				=&2k\Big(-\frac{sB_1 }{\lam ^{2s+1}} + O\Big(\frac{1}{\lam ^{2s+1+\var }}\Big)\Big),
			\end{align*}
			where $B_1=V(r_0,y_0'')\int U^2_{0,1}>0$, $\theta_1>0$ is a small constant and we also used the fact $n-4s-\theta_1>1$ and $2s+\beta+\frac{2s(n-4s-\theta_1)}{n-2s}>2s+1+\var$ in the last step.
			
			Using Lemma \ref{lemA.b2} again, we have
			\begin{align*}
				I_2=&2 k\Big(\int(1-K(r, y'')) U_{x_1^{+}, \lam}^{2_s^*-1} \frac{\pa U_{x_1^{+}, \lam}}{\pa \lam}
				+O\Big(\frac{1}{\lam^{\beta}} \int(1-K(r, y'')) U_{x_1^{+}, \lam}^{2_s^*-1}\Big(\sum_{j=2}^k U_{x_j^{+}, \lam}+\sum_{j=1}^k U_{x_j^{-}, \lam}\Big)\Big)\Big)\\
				=&2k\Big(\int_{B_{\lam^{-\frac{1}{2}+\var}}(x_1^{+})}(1-K(r, y'')) U_{x_1^{+}, \lam}^{2_s-1} \frac{\pa U_{x_1^{+}, \lam}}{\pa \lam}+O\Big(\frac{1}{\lam^{2 s+1+\var}}\Big)\Big)\\
				=&2kO\Big(\frac{1}{\lam^{2 s+1+\var}}\Big).
			\end{align*}
			Finally, we estimate $I_3$. By symmetry and Lemma \ref{lemA.1}, we have
			\begin{equation*}
				\begin{aligned}
					I_3=&2k\int_{\Omega^+_1}\Big((Z_{\bar{r},\bar{h},\bar{y}'',\lam }^*)^{2_s^*-1}-\sum_{j=1}^{k}U_{x_j^+,\lam }^{2_s^*-1}-\sum_{j=1}^{k}U_{x_j^-,\lam }^{2_s^*-1}\Big)\frac{\pa  Z_{\bar{r},\bar{h},\bar{y}'',\lam }^*}{\pa  \lam }\\
					=&2k\Big(\int_{\Omega^+_1}(2_s^*-1)U_{x_1^+,\lam }^{2_s^*-2}\Big(\sum_{j=2}^{k}U_{x_j^+,\lam }+\sum_{j=1}^{k}U_{x_j^-,\lam }\Big)\frac{\pa  U_{x_1^+,\lam }}{\pa  \lam } +O\Big(\frac{1}{\lam ^{2s+1+\var }}\Big)\Big)\\
					=&2k\Big(-\sum_{j=2}^{k}\frac{B_2}{\lam ^{n-2s+1}|x_j^+-x_1^+|^{n-2s}} - \sum_{j=1}^{k}\frac{B_2}{\lam ^{n-2s+1}|x_j^--x_1^+|^{n-2s}}
					+O\Big(\frac{1}{\lam ^{2s+1+\var }}\Big)\Big),
				\end{aligned}
			\end{equation*}
			for some constant $B_2>0$.
			
			Thus, we obtain that
			\begin{align*}
				\frac{\pa  I(Z_{\bar{r},\bar{h},\bar{y}'',\lam })}{\pa  \lam } =& 2k\Big(-\frac{sB_1}{\lam ^{2s+1}}
				+ \sum_{j=2}^{k}\frac{B_2}{\lam ^{n-2s+1}|x_j^+-x_1^+|^{n-2s}} \\
				&+ \sum_{j=1}^{k}\frac{B_2}{\lam ^{n-2s+1}|x_j^--x_1^+|^{n-2s}}
				+ O\Big(\frac{1}{\lam ^{2s+1+\var }}\Big)\Big).
			\end{align*}
		\end{proof}
		As in \cite{guo2020solutions}, we can prove the following lemma.
		\begin{lem}If $n>4s+1$, then we have
			\begin{align*}
				\frac{\pa I\left(Z_{\bar{r}, \bar{h}, \bar{y}'', \lam  }\right)}{\pa \bar{r}}= & 2 k\Big(\frac{sB'_1}{\lam ^{2s}}+\sum_{j=2}^k \frac{B'_2}{\bar{r} \lam  ^{n-2s}|x_j^{+}-x_1^{+}|^{n-2s}}+\sum_{j=1}^k \frac{B'_2}{\bar{r} \lam  ^{n-2s}|x_j^{-}-x_1^{+}|^{n-2s}}. \\
				& +O\Big(\frac{1}{\lam  ^{s+\var}}\Big)\Big),
			\end{align*}
			and
			\begin{align*}
				\frac{\pa I\left(Z_{\bar{r}, \bar{h}, \bar{y}'', \lam  }\right)}{\pa \bar{y}''_j}= 2 k\Big(\frac{sB'_1}{\lam ^{2s}}+O\Big(\frac{1}{\lam  ^{s+\var}}\Big)\Big),\quad j=4,\ldots,n,
			\end{align*}
			where $B'_1$ and $B'_2$ are some positive constants.
		\end{lem}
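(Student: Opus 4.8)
The plan is to mirror the proof of Lemma~\ref{lemB.1}, with $\pa/\pa\lam$ replaced throughout by $\pa/\pa\bar r$ (and afterwards by $\pa/\pa\bar y''_j$), the only structural change being that $\pa_{\bar r}$ and $\pa_{\bar y''_j}$ each produce a factor $\lam$ instead of $\lam^{-1}$. First I would pass from $Z_{\bar r,\bar h,\bar y'',\lam}$ to $Z^{*}_{\bar r,\bar h,\bar y'',\lam}$: using $Z_{j,3}^{\pm}=\pa_{\bar r}Z_{x_j^{\pm},\lam}=O(\lam\,Z_{x_j^{\pm},\lam})$ and $\pa_{\bar y''_l}Z_{x_j^{\pm},\lam}=O(\lam\,Z_{x_j^{\pm},\lam})$ from \eqref{2.9}, the fact that $\eta-1$ is supported in $\{|(r,y'')-(r_0,y''_0)|\ge\si\}$ (where $U_{x_j^{\pm},\lam}\lesssim\lam^{-\frac{n-2s}{2}}$), and controlling the nonlocal cut-off commutator exactly as the term $l_k$ in Lemma~\ref{lem2.4}, this replacement costs only $O(k\lam^{-s-\var})$ because $n>4s+1>3s+1$. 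Writing $I'(Z)=(-\Delta)^sZ+V(r,y'')Z-K(r,y'')Z_+^{2_s^*-1}$ and $(-\Delta)^sZ^{*}=\sum_j(U_{x_j^+,\lam}^{2_s^*-1}+U_{x_j^-,\lam}^{2_s^*-1})$, I then obtain, exactly as in Lemma~\ref{lemB.1},
\be
\frac{\pa I(Z_{\bar r,\bar h,\bar y'',\lam})}{\pa\bar r}=I_1+I_2-I_3+O\Big(\frac{k}{\lam^{s+\var}}\Big),
\ee
with $I_1=\int V(r,y'')Z^{*}\pa_{\bar r}Z^{*}$, $I_2=\int(1-K(r,y''))(Z^{*})^{2_s^*-1}\pa_{\bar r}Z^{*}$ and $I_3=\int\big((Z^{*})^{2_s^*-1}-\sum_j(U_{x_j^+,\lam}^{2_s^*-1}+U_{x_j^-,\lam}^{2_s^*-1})\big)\pa_{\bar r}Z^{*}$.

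For $I_1$ the diagonal self-term dominates, $I_1=2k\int V U_{x_1^+,\lam}\pa_{\bar r}U_{x_1^+,\lam}+(\text{off-diagonal})$. Since $U_{x,\lam}(y)$ depends only on $y-x$, one has $\pa_{\bar r}U_{x_1^+,\lam}=-\langle e,\nabla_y U_{x_1^+,\lam}\rangle$ with $e=\pa_{\bar r}x_1^+$ a \emph{unit} vector; integrating by parts, Taylor expanding $V$ about $x_1^+$ (legitimate by $(K_3)$), using $\int_{\Rn}U_{0,1}^2<\infty$ (finite since $n>4s+1$), and noting that the bubble $U_{x_1^+,\lam}$ is centred at a point whose $y'$-component has modulus $\bar r$, so that $\langle e,\nabla_y V(x_1^+)\rangle=\pa_r V(\bar r,\bar y'')=\pa_r V(r_0,y''_0)+O(\theta_k)$, one finds that $\tfrac{1}{2}\tfrac{d}{d\bar r}\int V U_{x_1^+,\lam}^2$ is a fixed multiple of $\pa_rV(r_0,y''_0)\lam^{-2s}$, which is the claimed term $2k\,sB'_1\lam^{-2s}$ (with $B'_1$ proportional to $\pa_r V(r_0,y''_0)$); the quadratic Taylor remainder of $V$ and the off-diagonal contributions are of strictly lower order and fit inside $O(k\lam^{-s-\var})$, by the very estimates already used in Lemma~\ref{lemB.1}. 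For $I_2$, since $K(r_0,y''_0)=1$ and $(r_0,y''_0)$ is a critical point of $K$, both $1-K$ and $\nabla K$ vanish at $(r_0,y''_0)$; the same integration by parts, together with the scale- and translation-invariance of $\int_{\Rn}U_{0,1}^{2_s^*}$, produces a leading term proportional to $\pa_r K(\bar r,\bar y'')\int_{\Rn}U_{0,1}^{2_s^*}=O(\theta_k)$, and since $\theta_k\sim\lam^{-\frac{2s+1-\beta}{2}}$ with $\frac{2s+1-\beta}{2}>s$, this is $O(k\lam^{-s-\var})$. For $I_3$ I restrict to $\Om_1^+$ (multiplying by $2k$), where $U_{x_1^+,\lam}$ dominates so that $(Z^{*})^{2_s^*-1}-\sum_j U_{x_j^{\pm},\lam}^{2_s^*-1}\approx(2_s^*-1)U_{x_1^+,\lam}^{2_s^*-2}\big(\sum_{j\ge2}U_{x_j^+,\lam}+\sum_j U_{x_j^-,\lam}\big)$ and $\pa_{\bar r}Z^{*}\approx\pa_{\bar r}U_{x_1^+,\lam}$; writing $(2_s^*-1)U_{x_1^+,\lam}^{2_s^*-2}\pa_{\bar r}U_{x_1^+,\lam}=-\langle e,\nabla_y(U_{x_1^+,\lam}^{2_s^*-1})\rangle$, integrating by parts, and using that every distance $|x_j^{\pm}-x_1^+|$ is proportional to $\bar r$ (hence $\pa_{\bar r}|x_j^{\pm}-x_1^+|^{-(n-2s)}=-(n-2s)\bar r^{-1}|x_j^{\pm}-x_1^+|^{-(n-2s)}$) --- equivalently, differentiating the interaction expansions of Lemma~\ref{lemA.1} exactly as in Lemma~\ref{lemB.1} --- gives $-I_3=2k\big(\sum_{j=2}^{k}\frac{B'_2}{\bar r\lam^{n-2s}|x_j^+-x_1^+|^{n-2s}}+\sum_{j=1}^{k}\frac{B'_2}{\bar r\lam^{n-2s}|x_j^--x_1^+|^{n-2s}}+O(\lam^{-s-\var})\big)$. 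Adding $I_1+I_2-I_3$ yields the first formula.

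The derivative in $\bar y''_j$ ($j=4,\dots,n$) is handled identically, with two changes. First, $\pa_{\bar y''_j}x_1^+=e_j$ is the $j$-th coordinate unit vector, so $\langle e_j,\nabla_y V(x_1^+)\rangle=\pa_{y''_j}V(\bar r,\bar y'')=\pa_{y''_j}V(r_0,y''_0)+O(\theta_k)$ and $I_1$ again contributes $2k\,sB'_1\lam^{-2s}$ (with $B'_1$ now a constant multiple of $\pa_{y''_j}V(r_0,y''_0)$). Second --- and this is precisely why no interaction term appears --- all mutual distances $|x_i^{\pm}-x_j^{\pm}|$ are independent of $\bar y''$, so $\pa_{\bar y''_j}I_3$ only contributes to the remainder; hence $\frac{\pa I}{\pa\bar y''_j}=2k(sB'_1\lam^{-2s}+O(\lam^{-s-\var}))$.

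The delicate part --- as in Lemmas~\ref{lem2.4} and \ref{lemB.1}, and the reason the whole construction is carried out in the weighted norms $\|\cdot\|_{*},\|\cdot\|_{**}$ --- is the remainder bookkeeping: one must check that the truncation and cut-off-commutator errors, the off-diagonal bubble interactions, the Taylor remainders of $V$ and $K$, and the $O(\theta_k)$ produced by $I_2$ all fit inside $O(k\lam^{-s-\var})$ for a suitably small $\var>0$. This is exactly where the hypothesis \eqref{0} (equivalently the admissible ranges of $\beta$ and $\tau$, together with $n>4s+1$) and the precise choice of $\theta_k$ in \eqref{1.5} are used; since all of these estimates are already established in Section~\ref{sec:2} and in the proof of Lemma~\ref{lemB.1}, here they are applied essentially word for word with $\pa_\lam$ replaced by $\pa_{\bar r}$ or $\pa_{\bar y''_j}$.
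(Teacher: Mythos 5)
Your strategy -- transposing the proof of Lemma \ref{lemB.1} with $\pa_\lam$ replaced by $\pa_{\bar r}$ and $\pa_{\bar y''_j}$ -- is exactly the route the paper intends: the paper gives no proof here, deferring to \cite{guo2020solutions}, and the argument there is the one you reproduce. Your decomposition into $I_1,I_2,I_3$, the treatment of $I_2$ via $\nabla K(r_0,y_0'')=0$ and $\theta_k\sim\lam^{-\frac{2s+1-\beta}{2}}$ with $\tfrac{2s+1-\beta}{2}>s$, and the sign and $\bar r^{-1}$ factor of the interaction terms (coming from $\langle x_i-x_j,\pa_{\bar r}x_i\rangle=|x_i-x_j|^2/(2\bar r)$, since every $x_j^{\pm}$ lies at distance $\bar r$ from $(0,0,0,\bar y'')$) are all correct. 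Two small points: your computation makes $B_1'$ a multiple of $\pa_r V(r_0,y_0'')$ (resp.\ $\pa_{y''_j}V(r_0,y_0'')$), which need not be positive and differs between the two formulas; this is a defect of the statement rather than of your argument, and it is harmless because $\lam^{-2s}=O(\lam^{-s-\var})$, so the whole ``main term'' $sB_1'\lam^{-2s}$ sits inside the error anyway. Also, $\int U_{0,1}^2<\infty$ needs only $n>4s$; it is the first-order Taylor remainder $\int|z|\,U_{0,1}^2\,\d z$ in $I_1$ that uses $n>4s+1$.

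The one place where ``word for word with $\pa_\lam\mapsto\pa_{\bar r}$'' does not suffice is the remainder of $I_3$. Replacing $|\pa_\lam U_{x_j^\pm,\lam}|\lesssim\lam^{-\beta}U_{x_j^\pm,\lam}$ by $|\pa_{\bar r}U_{x_j^\pm,\lam}|\lesssim\lam U_{x_j^\pm,\lam}$ multiplies each remainder bound of Lemma \ref{lemB.1} by $\lam^{1+\beta}$, while the error budget only loosens from $\lam^{-2s-1-\var}$ to $\lam^{-s-\var}$, i.e.\ by $\lam^{1+s}$; since $\beta=\frac{\al}{n-2s}$ can exceed $s$ under \eqref{0} (e.g.\ $n=4$ with $s$ close to $\tfrac12$), this transfer fails for the terms quadratic in $W=\sum_{j\ge2}U_{x_j^+,\lam}+\sum_jU_{x_j^-,\lam}$: the pointwise bound gives $\lam\int_{\Omega_1^+}U_{x_1^+,\lam}^{2_s^*-2}W^2\sim\lam^{1-\frac{2sn}{n-2s}+o(1)}$, which is not $O(\lam^{-s-\var})$ in that range. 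The fix is the homogeneity you already invoke for the main term, applied to the remainder as well: writing $I_3=\sum_{i<l}\pa_{\bar r}\int U_{x_i,\lam}^{2_s^*-1}U_{x_l,\lam}+\frac{1}{2_s^*}\pa_{\bar r}R$ with $R$ the higher-order part of the expansion of $\int(Z^*)^{2_s^*}$, every term depends on $\bar r$ only through the rescaled mutual distances $\lam|x_i^\pm-x_j^\pm|\propto\bar r$, so $\pa_{\bar r}$ costs a factor $\bar r^{-1}=O(1)$ rather than $\lam$, and $\pa_{\bar r}R=O(R/\bar r)=o(k\lam^{-2s})$. You should make this global decomposition explicit instead of appealing to a literal transcription of the cell-by-cell estimates; with that, the proof is complete.
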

		
		\section{ Proof of the main results}\label{sec:4}
		In this section, we will choose suitable $(\bar{r},\bar{h},\bar{y}'',\lam )$ such that $u_k=Z_{\bar{r},\bar{h},\bar{y}'',\lam }+\varphi_{\bar{r},\bar{h},\bar{y}'',\lam }$ is a solution of \eqref{1.1}. Following the idea in\cite{PWY2018}, in order to apply local Pohozaev identities, we quote the extension of $u_k$, that is
		\begin{equation*}
			\tilde{u}_k=\tilde{Z}_{\bar{r},\bar{h},\bar{y}'',\lam}+\tilde{\varphi},
		\end{equation*}
		where $\tilde{Z}_{\bar{r},\bar{h},\bar{y}'',\lam}$ and $\tilde{\varphi}$ are the extensions of $Z_{\bar{r},\bar{h},\bar{y}'',\lam}$ and $\varphi$, respectively. Then we have
		\begin{equation}\label{3.1}
			\left\{
			\begin{aligned}
				\operatorname{div}(t^{1-2s}\nabla\tilde{u}_k)=&0,&&\text{ in } \,\R_+^{n+1},\\
				-\lim_{t \to 0^+}t^{1-2s}\pa_t\tilde{u}_k
				=&\omega_s\Big(-V(r,y'')u_k+K(r,y'')(u_k)_+^{2_s^*-1}\\&+\sum_{l=2}^n c_l\sum_{j=1}^k(Z_{x_j^+,\lam}^{2_s^*-2}Z_{j,l}^+
				+Z_{x_j^-,\lam}^{2_s^*-2}Z_{j,l}^-)\Big)
				&&\text{ on }\, \Rn.
			\end{aligned}
			\right.
		\end{equation}
		Without loss of generality, we may assume $\omega_s=1$.
		
		Following the same arguments in \cite{DHW2023A}, we can obtain 
		\begin{prop}\label{prop3.1}
			Suppose that  $(\bar{r},\bar{h},\bar{y}'',\lam )$ satisfies
			\begin{align}
				&-\int_{\pa''\mathcal{B}_\rho^+} t^{1-2s}\frac{\pa\tilde{u}_k}{\pa \nu}\frac{\pa\tilde{u}_k}{\pa y_i}+\frac{1}{2}\int_{\pa''\mathcal{B}_\rho^+} t^{1-2s}|\nabla\tilde{u}_k|^2\nu_i\notag
				\\=&\int_{B_\rho}\Big(-V(r,y'')u_k+K(r,y'')(u_k)_+^{2_s^*-1}\Big)\frac{\pa u_k}{\pa y_i}, \quad i=4,\ldots,n,\label{3.5}
			\end{align}
			\begin{align}	
				&-\int_{\pa''\mathcal{B}_\rho^+} t^{1-2s}\langle\nabla\tilde{u}_k,Y\rangle\frac{\pa\tilde{u}_k}{\pa \nu}+\frac{1}{2}\int_{\pa''\mathcal{B}_\rho^+}t^{1-2s}|\nabla\tilde{u}_k|^2\langle Y,\nu\rangle +\frac{2s-n}{2}\int_{\pa\mathcal{B}_\rho^+}t^{1-2s}\frac{\pa\tilde{u}_k}{\pa \nu}\tilde{u}_k\notag\\\label{3.6}
				=&\int_{{B_\rho}}\Big(-V(r,y'')u_k+K(r,y'')(u_k)_+^{2_s^*-1}\Big)\langle \nabla u_k,y\rangle,
			\end{align}
			and
			\be \label{3.7}
			\int\Big((-\Delta)^s u_k+V(r,y'')u_k-K(r,y'')(u_k)_+^{2_s^*-1}\Big)\frac{\pa  Z_{\bar{r},\bar{h},\bar{y}'',\lam }}{\pa  \lam }=0,
			\ee
			where $u_k=Z_{\bar{r},\bar{h},\bar{y}'',\lam }+\varphi_{\bar{r},\bar{h},\bar{y}'',\lam }$ and $B_\rho=\{(r,y''):|(r,y'')-(r_0,y''_0)|\leq \rho\}$ with $\rho \in (2\delta,5\delta)$. Then $c_l=0$, $l=2,\ldots,n$.
		\end{prop}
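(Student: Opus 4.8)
The plan is to exploit that $u_k=Z_{\bar{r},\bar{h},\bar{y}'',\lam }+\varphi_{\bar{r},\bar{h},\bar{y}'',\lam }$ solves \eqref{2.21}, equivalently that its extension $\tilde{u}_k$ solves \eqref{3.1}, and to run for $\tilde{u}_k$ itself the local Pohozaev identities attached to the translations $\pa_{y_i}$ ($i=4,\ldots,n$) and to the dilation field $Y$. Concretely, I would multiply $\operatorname{div}(t^{1-2s}\nabla\tilde{u}_k)=0$ by $\pa\tilde{u}_k/\pa y_i$ (resp. by $\langle\nabla\tilde{u}_k,Y\rangle$), integrate over $\mathcal{B}_\rho^+$, integrate by parts, and insert the Neumann condition of \eqref{3.1} along $\pa'\mathcal{B}_\rho^+$. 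Since the co-normal derivative of $\tilde{u}_k$ there now carries the extra term $\sum_{l=2}^{n}c_l\sum_{j=1}^{k}\big(Z_{x_j^+,\lam }^{2_s^*-2}Z_{j,l}^+ +Z_{x_j^-,\lam }^{2_s^*-2}Z_{j,l}^-\big)$, each identity so produced coincides with the hypothesis \eqref{3.5} (resp. \eqref{3.6}) up to one extra ``defect'' integral; subtracting, and using \eqref{3.7} together with \eqref{2.21} for the analogous relation in the $\lam $-direction, one is left with the homogeneous linear system
\begin{align*}
&\sum_{l=2}^{n}c_l\,\mathcal{M}_{l,t}=0,\qquad t=2,3,\ldots,n,\\
&\mathcal{M}_{l,2}=\sum_{j=1}^{k}\int\big(Z_{x_j^+,\lam }^{2_s^*-2}Z_{j,l}^+ +Z_{x_j^-,\lam }^{2_s^*-2}Z_{j,l}^-\big)\,\frac{\pa Z_{\bar{r},\bar{h},\bar{y}'',\lam }}{\pa\lam },\\
&\mathcal{M}_{l,3}=\sum_{j=1}^{k}\int_{B_\rho}\big(Z_{x_j^+,\lam }^{2_s^*-2}Z_{j,l}^+ +Z_{x_j^-,\lam }^{2_s^*-2}Z_{j,l}^-\big)\,\langle\nabla u_k,y\rangle,\\
&\mathcal{M}_{l,i}=\sum_{j=1}^{k}\int_{B_\rho}\big(Z_{x_j^+,\lam }^{2_s^*-2}Z_{j,l}^+ +Z_{x_j^-,\lam }^{2_s^*-2}Z_{j,l}^-\big)\,\frac{\pa u_k}{\pa y_i},\quad i=4,\ldots,n,
\end{align*}
in the $n-1$ unknowns $c_2,\ldots,c_n$; it then suffices to prove that $(\mathcal{M}_{l,t})$ is invertible for all large $k$.

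To estimate this matrix, I would first use the explicit form of $x_j^\pm$ and the chain rule to rewrite the test functions through the $Z_{j,l}^\pm$. Since the $i$-th coordinate of $x_j^\pm$ equals $\bar{y}''_i$ for $i\ge4$, one has $\pa_{y_i}U_{x_j^\pm,\lam }=-\pa_{\bar{y}''_i}U_{x_j^\pm,\lam }$, and the dilation field applied to $U_{x_j^\pm,\lam }$ decomposes, up to a bounded multiple of $U_{x_j^\pm,\lam }$ and lower-order terms, into a linear combination of $\pa_{\bar{r}}U_{x_j^\pm,\lam }$, $\pa_\lam U_{x_j^\pm,\lam }$ and the $\pa_{\bar{y}''_i}U_{x_j^\pm,\lam }$. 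Hence, modulo the derivatives of the cut-off $\eta$ (supported in $\{|(r,y'')-(r_0,y_0'')|\ge\sigma\}$ and thus contributing $O(\lam ^{-N})$ for every $N$) and modulo the $\pa\varphi/\pa y_i$ terms, controlled via the bound $\|\varphi\|_*\lesssim\lam ^{-\frac{2s+1-\beta}{2}-\var }$ of Proposition \ref{prop2.5} and \eqref{2.9}, the entries $\mathcal{M}_{l,t}$ reduce to finite sums of the orthogonality integrals computed in \eqref{2.12}. Plugging in \eqref{2.12}, Lemma \ref{lemA.1} and the decay estimates \eqref{a2.13}--\eqref{b2.14.1}, and rescaling $c_l\mapsto\lam ^{n_l}c_l$ (with $n_2=-\beta$, $n_l=1$ for $l\ge3$), I expect $(\mathcal{M}_{l,t})$ to take the shape $2k\,(\bar{c}\,\mathrm{Id}+o(1))$ with $\bar{c}>0$, hence to be invertible for all $k\ge k_0$. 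This forces $c_2=\cdots=c_n=0$, and then \eqref{2.21} reduces to \eqref{1.1}, so $u_k$ is a genuine solution.

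The hard part is exactly this last invertibility. The leading diagonal entries live on two mismatched scales --- $\lam ^{-2\beta}\to0$ in the $\lam $-row against $\lam ^{2}\to\infty$ in the $\bar{r}$- and $\bar{y}''$-rows --- so I must check that the off-diagonal contributions are $o(1)$ \emph{relative to the correct scale}. In the $\lam $-row this rests on the exact parity orthogonality $\int U_{0,1}^{2_s^*-2}Z_0Z_i=0$ of the kernels, which kills the leading interaction and leaves a strictly smaller remainder; in the $\bar{r}$-row I must moreover ensure that the $\pa\varphi/\pa y_i$ term and the mixing of the $\lam $- and $\bar{h}$-directions hidden inside the dilation field do not swamp the main diagonal term --- which is precisely where the sharpened estimate on $\|\varphi\|_*$ from Proposition \ref{prop2.5} and the careful accounting of cut-off terms are used, in the spirit of \cite{DHW2023A,PWY2018}.
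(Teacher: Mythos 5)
Your proposal is correct and follows essentially the same route the paper intends: the paper gives no proof of Proposition \ref{prop3.1} beyond citing \cite{DHW2023A}, and the argument there is exactly your scheme --- subtract the assumed identities \eqref{3.5}, \eqref{3.6}, \eqref{3.7} from the corresponding identities derived from \eqref{3.1} and \eqref{2.21} to get a homogeneous $(n-1)\times(n-1)$ system in $c_2,\ldots,c_n$, then show the (suitably rescaled) coefficient matrix is diagonally dominant via \eqref{2.12}, the parity cancellations, and the $\|\varphi\|_*$ bound of Proposition \ref{prop2.5}. You also correctly flag the two genuine delicate points (the mismatched scales $\lam^{-2\beta}$ versus $\lam^{2}$, and the $\bar h$-/angular components of the dilation field, which are handled by the evenness in $y_3$ and the $2\pi/k$-rotational symmetry of $H_s$), so nothing essential is missing.
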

		\begin{lem}\label{lemB.2}
			We have 
			\begin{align}
				&\int\Big((-\Delta)^s u_k+V(r,y'')u_k-K(r,y'')(u_k)_+^{2_s^*-1}\Big)\frac{\pa  Z_{\bar{r},\bar{h},\bar{y}'',\lam }}{\pa  \lam }\notag \\
				=&2k\Big(-\frac{sB_1}{\lam ^{2s+1}}+\sum_{j=2}^{k}\frac{B_2}{\lam ^{n-2s+1}|x_j^+-x_1^+|^{n-2s}}
				+\sum_{j=1}^{k}\frac{B_2}{\lam ^{n-2s+1}|x_j^--x_1^+|^{n-2s}}+O\Big(\frac{1}{\lam ^{2s+1+\var }}\Big)\Big)\notag \\ \label{lemB.2.1}
				=&2k \Big(-\frac{sB_1}{\lam ^{2s+1}}+\frac{B_3k^{n-2s}}{\lam ^{n-2s+1}(\sqrt{1-\bar{h} ^2})^{n-2s}}+\frac{B_4k}{\lam ^{n-2s+1}\bar{h}^{n-2s-1}\sqrt{1-\bar{h} ^2}}+O\Big(\frac{1}{\lam ^{2s+1+\var }}\Big)\Big),
			\end{align}
			where $B_i>0$, $i=1,2,3,4$.
		\end{lem}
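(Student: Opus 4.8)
The plan is to substitute $u_k=Z+\varphi$ (writing $Z:=Z_{\bar r,\bar h,\bar y'',\lam}$ and $\varphi:=\varphi_{\bar r,\bar h,\bar y'',\lam}$ throughout), expand the integrand about $Z$, and peel off the main term $\pa I(Z)/\pa\lam$, which is already computed in Lemma \ref{lemB.1}. Using that $(-\Delta)^s$ is self-adjoint and that $K(u_k)_+^{2_s^*-1}-KZ^{2_s^*-1}=(2_s^*-1)KZ^{2_s^*-2}\varphi+N(\varphi)$ by the definition of $N(\varphi)$ in \eqref{2.22}, one gets
\[
\int\Big((-\Delta)^s u_k+Vu_k-K(u_k)_+^{2_s^*-1}\Big)\frac{\pa Z}{\pa\lam}=\frac{\pa I(Z)}{\pa\lam}+\mathrm{II}-\mathrm{III},
\]
with $\mathrm{II}=\int\varphi\big((-\Delta)^s\tfrac{\pa Z}{\pa\lam}+V\tfrac{\pa Z}{\pa\lam}-(2_s^*-1)KZ^{2_s^*-2}\tfrac{\pa Z}{\pa\lam}\big)$ and $\mathrm{III}=\int N(\varphi)\tfrac{\pa Z}{\pa\lam}$. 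By Lemma \ref{lemB.1}, $\pa I(Z)/\pa\lam$ equals the right-hand side of the first claimed identity, so the first equality follows once $\mathrm{II},\mathrm{III}=O(k\lam^{-(2s+1+\var)})$.

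To estimate $\mathrm{II}$, write $\tfrac{\pa Z}{\pa\lam}=\sum_{j=1}^k(Z_{j,2}^++Z_{j,2}^-)$ with $Z_{j,2}^\pm=\eta\,\pa_\lam U_{x_j^\pm,\lam}$; since $\pa_\lam U_{x_j^\pm,\lam}$ is a linear combination of the kernels $Z_0,Z_1,\dots,Z_n$ based at $(x_j^\pm,\lam)$, it solves $(-\Delta)^s\pa_\lam U_{x_j^\pm,\lam}=(2_s^*-1)U_{x_j^\pm,\lam}^{2_s^*-2}\pa_\lam U_{x_j^\pm,\lam}$, and the cut-off commutator $[(-\Delta)^s,\eta]$ is handled exactly as the term $J_2$ in Lemma \ref{lem2.4}. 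Thus $\mathrm{II}$ reduces, up to $O(k\lam^{-(2s+1+\var)})$, to $(2_s^*-1)\sum_{j=1}^k\int\varphi\,\eta\,\pa_\lam U_{x_j^+,\lam}\big(U_{x_j^+,\lam}^{2_s^*-2}-KZ^{2_s^*-2}\big)$ plus the analogous $-$ term. The constraint $\varphi\in\mathbb{H}$ gives $\int\eta^{2_s^*-1}U_{x_j^\pm,\lam}^{2_s^*-2}\pa_\lam U_{x_j^\pm,\lam}\varphi=\int Z_{x_j^\pm,\lam}^{2_s^*-2}Z_{j,2}^\pm\varphi=0$, which kills the principal contribution (where $K\approx1$, $Z\approx U_{x_j^\pm,\lam}$ and $\eta\approx\eta^{2_s^*-1}$ near $x_j^\pm$); the leftover pieces come from $K-1$, from the overlap of distinct bubbles inside $Z^{2_s^*-2}$, and from the region $\{\eta\neq\eta^{2_s^*-1}\}$, which sits at distance $\gtrsim\si$ from every $x_j^\pm$, and each is bounded like the terms $J_{0,2}$, $J_{0,1}$ and the cut-off pieces of Lemma \ref{lem2.4}, now multiplied by $\|\varphi\|_*$. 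This gives $\mathrm{II}=O(k\lam^{-(2s+1+\var)})$. For $\mathrm{III}$, Lemma \ref{lem2.3} gives $|N(\varphi)|\lesssim\lam^{\frac{n+2s}{2}+\frac{4s\tau}{n-2s}}\|\varphi\|_*^{\min(2_s^*-1,2)}\sum_j\big((1+\lam|y-x_j^+|)^{-\frac{n+2s}{2}-\tau}+(1+\lam|y-x_j^-|)^{-\frac{n+2s}{2}-\tau}\big)$, while $|\tfrac{\pa Z}{\pa\lam}|\lesssim\lam^{-\beta}\sum_j(U_{x_j^+,\lam}+U_{x_j^-,\lam})$ by \eqref{2.9}; pairing and integrating (the single-bubble integrals converge because $n>4s+1$, the cross terms are lower order) yields $|\mathrm{III}|\lesssim k\lam^{-\beta+\frac{4s\tau}{n-2s}}\|\varphi\|_*^{\min(2_s^*-1,2)}$. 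Inserting $\|\varphi\|_*\lesssim\lam^{-\frac{2s+1-\beta}{2}-\var}$ from Proposition \ref{prop2.5}, the resulting exponent is $-\beta-\min(2_s^*-1,2)\big(\tfrac{2s+1-\beta}{2}+\var\big)+\tfrac{4s\tau}{n-2s}$, which, using $\tfrac{4s\tau}{n-2s}<\min(s,(2_s^*-2)s)$ and the bound on $s$ in \eqref{0}, is $<-(2s+1)$; hence $\mathrm{III}=O(k\lam^{-(2s+1+\var)})$, and the first equality is proved.

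The second equality is pure geometry: the $x_j^\pm$ lie on two circles of radius $\bar r\sqrt{1-\bar h^2}$ at heights $\pm\bar r\bar h$, with $x_j^\pm$ at polar angle $\tfrac{2(j-1)\pi}{k}$, so $|x_j^+-x_1^+|=2\bar r\sqrt{1-\bar h^2}\,\sin\tfrac{(j-1)\pi}{k}$ and $|x_j^--x_1^+|=2\bar r\big((1-\bar h^2)\sin^2\tfrac{(j-1)\pi}{k}+\bar h^2\big)^{1/2}$. Since $n-2s>1$, $\sum_{j=2}^k\sin^{-(n-2s)}\tfrac{(j-1)\pi}{k}=(c_{n,s}+o(1))k^{n-2s}$, which produces the $B_3k^{n-2s}/(\lam^{n-2s+1}(\sqrt{1-\bar h^2})^{n-2s})$ term; and $\tfrac1k\sum_{j=1}^k\big((1-\bar h^2)\sin^2\tfrac{(j-1)\pi}{k}+\bar h^2\big)^{-(n-2s)/2}$ is a Riemann sum converging to $\tfrac1\pi\int_{-1}^1\big(1-(1-\bar h^2)u^2\big)^{-(n-2s)/2}(1-u^2)^{-1/2}\,\d u$, whose leading order in the range of $\bar h$ under consideration is a constant times $\bar h^{-(n-2s-1)}(1-\bar h^2)^{-1/2}$, producing the $B_4k/(\lam^{n-2s+1}\bar h^{n-2s-1}\sqrt{1-\bar h^2})$ term (with $\bar r=r_0+o(1)$ absorbed into $B_3,B_4$).

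The main obstacle is the control of $\mathrm{II}$ and $\mathrm{III}$: because $\pa_\lam Z$ decays only like $\lam^{-\beta}$ with $\beta<1$, there is no free power of $\lam$ to spend, so the gain in $\mathrm{II}$ must come entirely from the orthogonality conditions and from $K,\eta$ being critical near the concentration points, while the margin below $\lam^{-(2s+1)}$ in $\mathrm{III}$ is thin and forces the use of the sharpened estimate of Proposition \ref{prop2.5} together with the precise interval \eqref{0} — exactly the improved accuracy flagged in Remark \ref{rema1.4}. A secondary point of care is the resummation of the inter-circle interaction $\sum_j|x_j^--x_1^+|^{-(n-2s)}$, which must be handled uniformly in the regimes $\bar h\to0$ and $\bar h\to1$, where bubbles on opposite circles can become very close.
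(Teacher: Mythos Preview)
Your decomposition into $\partial I(Z)/\partial\lam+\mathrm{II}-\mathrm{III}$ is exactly the paper's splitting (their $\langle I'(Z),\partial_\lam Z\rangle+2kJ_1-J_2$), and your treatment of $\mathrm{II}$ is essentially a spelled-out version of the estimate \eqref{2.13} that the paper simply cites. The geometric resummation for the second equality is also in line with the paper.

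There is, however, a genuine gap in your estimate of $\mathrm{III}$. Going through Lemma~\ref{lem2.3} you inherit the bound $\|N(\varphi)\|_{**}\lesssim\lam^{\frac{4s\tau}{n-2s}}\|\varphi\|_*^{\min(2_s^*-1,2)}$, and pairing with $|\partial_\lam Z|\lesssim\lam^{-\beta}Z$ gives the exponent $-\beta-\min(2_s^*-1,2)\big(\tfrac{2s+1-\beta}{2}\big)+\tfrac{4s\tau}{n-2s}$. When $2_s^*<3$ (equivalently $n>6s$, which occurs for instance throughout the admissible range for $n=4$ in \eqref{0}) this exponent is \emph{strictly larger} than $-(2s+1)$: writing $A=\tfrac{2s+1-\beta}{2}$ one needs $(2_s^*-3)A>\tfrac{4s\tau}{n-2s}$, and the left side is nonpositive. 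Even dropping the $\tfrac{4s\tau}{n-2s}$ loss by integrating $|\varphi|^{2_s^*-1}|\partial_\lam Z|$ directly does not help, since one still only gets $\|\varphi\|_*^{2_s^*-1}$, and $(2_s^*-1)A<2A=2s+1-\beta$ forces the same failure.

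The paper avoids this by using the pointwise Taylor bound $|N(\varphi)|\lesssim Z^{2_s^*-3}\varphi^2$ (valid uniformly even when $2_s^*-3<0$, because on $\{|\varphi|>|Z|\}$ one has $|\varphi|^{2_s^*-1}\le Z^{2_s^*-3}\varphi^2$), and then integrates $Z^{2_s^*-2}\varphi^2/\lam^{\beta}$. This produces the quadratic factor $\|\varphi\|_*^2$, giving the clean bound $|J_2|\lesssim k\|\varphi\|_*^2\lam^{-\beta}\lesssim k\lam^{-(2s+1)-2\var}$. Replacing your $\mathrm{III}$ argument by this direct computation closes the gap.
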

		\begin{proof}
			By symmetry, we have
			\begin{equation*}
				\begin{aligned}
					&\int\big((-\Delta)^{s}u_k+V(r,y'')u_k-K(r,y'')(u_k)_+^{2_s^*-1}\big)\frac{\pa  Z_{\bar{r},\bar{h},\bar{y}'',\lam }} {\pa \lam }\\
					=&\Big\langle I'(Z_{\bar{r},\bar{h},\bar{y}'',\lam }),\frac{\pa  Z_{\bar{r},\bar{h},\bar{y}'',\lam }} {\pa \lam }\Big\rangle + 2k\int\Big((-\Delta)^s\varphi+V(r,y'')\varphi-(2_s^*-1)K(r,y'')Z_{\bar{r},\bar{h},\bar{y}'',\lam }^{2_s^*-2}\varphi\Big)\frac{\pa  Z_{x_1^+,\lam }} {\pa \lam }\\
					&-\int K(r,y'')((Z_{\bar{r},\bar{h},\bar{y}'',\lam }+\varphi)_+^{2_s^*-1}-Z_{\bar{r},\bar{h},\bar{y}'',\lam }^{2_s^*-1}-(2_s^*-1)Z_{\bar{r},\bar{h},\bar{y}'',\lam }^{2_s^*-2}\varphi)\frac{\pa  Z_{\bar{r},\bar{h},\bar{y}'',\lam }} {\pa \lam }\\
					=&\Big\langle I'(Z_{\bar{r},\bar{h},\bar{y}'',\lam }),\frac{\pa  Z_{\bar{r},\bar{h},\bar{y}'',\lam }} {\pa \lam }\Big\rangle +2kJ_1-J_2.
				\end{aligned}
			\end{equation*}
			Using \eqref{2.13} and \eqref{2.28}, we have
			\begin{equation*}
				\begin{aligned}
					|J_1|=O\Big(\frac{\|\varphi\|_*}{\lam ^{\frac{2s+1+\beta}{2}}}\Big)
					=O\Big(\frac{1}{\lam ^{2s+1+\var }}\Big).
				\end{aligned}
			\end{equation*}
			Note that
			\begin{equation*}
				|(1+t)_+^p-1-pt|\leq
				\left\{
				\begin{array}{ll}
					Ct^2, & \hbox{$1<p\leq2$;} \vspace{0.15cm}\\
					C(t^2+|t|^p), & \hbox{$p>2$.}
				\end{array}
				\right.
			\end{equation*}
			If $2_s^*\leq3$, then we have
			
			\begin{align*}
				|J_2|=&\Big|\int\big((Z_{\bar{r},\bar{h},\bar{y}'',\lam }+\varphi)_+^{2_s^*-1}-Z_{\bar{r},\bar{h},\bar{y}'',\lam }^{2_s^*-1}-(2_s^*-1)Z_{\bar{r},\bar{h},\bar{y}'',\lam }^{2_s^*-2}\varphi\big)\frac{\pa  Z_{x_1^+,\lam }} {\pa \lam }\Big|\\
				\lesssim&\int\Big|Z_{\bar{r},\bar{h},\bar{y}'',\lam }^{2_s^*-3}\varphi^2\frac{\pa  Z_{\bar{r},\bar{h},\bar{y}'',\lam }} {\pa \lam }\Big|\\
				\lesssim&\frac{\|\varphi\|_*^2}{\lam ^\beta}\int\Big(\sum_{j=1}^{k}\frac{\lam ^{\frac{n-2s}{2}}}{(1+\lam |y-x_j^+|)^{n-2s}}+\sum_{j=1}^{k}\frac{\lam ^{\frac{n-2s}{2}}}{(1+\lam |y-x_j^-|)^{n-2s}}\Big)^{2_s^*-2}\\
				&\times\Big(\sum_{j=1}^{k}\frac{\lam ^{\frac{n-2s}{2}}}{(1+\lam |y-x_j^+|)^{\frac{n-2s}{2}+\tau}}+\sum_{j=1}^{k}\frac{\lam ^{\frac{n-2s}{2}}}{(1+\lam |y-x_j^-|)^{\frac{n-2s}{2}+\tau}}\Big)^2\\
				\lesssim&\frac{\|\varphi\|_*^2}{\lam ^\beta}\int\lam^n\Big(\sum_{j=1}^{k}\frac{1}{(1+\lam |y-x_j^+|)^{4s}}+\sum_{j=1}^{k}\frac{1}{(1+\lam |y-x_j^-|)^{4s}}\Big)\\
				&\times\Big(\sum_{j=1}^{k}\frac{1}{(1+\lam |y-x_j^+|)^{n-2s+2\tau}}+\sum_{j=1}^{k}\frac{1}{(1+\lam |y-x_j^-|)^{n-2s+2\tau}}\Big)\\
				\lesssim&\frac{\|\varphi\|_*^2}{\lam ^\beta}\Big(C+\sum\limits_{j=2}^{k}
				\frac{1}{(\lam|x_1^+-x_j^+|)^{2s+\tau}}+\sum\limits_{j=1}^{k}\frac{1}{(\lam|x_1^+-x_j^-|)^{2s+\tau}}\Big)\\
				\leq&\frac{Ck\|\varphi\|_*^2}{\lam ^\beta}=O\Big(\frac{k}{\lam ^{2s+1+\var }}\Big).
			\end{align*}
			
			Similarly, if $2_s^*>3$, we have,
			\begin{align*}
				|J_2|\lesssim\Big|\int\big(Z_{\bar{r},\bar{h},\bar{y}'',\lam }^{2_s^*-3}\varphi^2+\varphi^{2_s^*-1}\big)\frac{\pa  Z_{\bar{r},\bar{h},\bar{y}'',\lam }} {\pa \lam }\Big|
				=O\Big(\frac{k}{\lam ^{2s+1+\var }}\Big).
			\end{align*}
			Hence,
			\begin{align*}
				\Big\langle I'(Z_{\bar{r},\bar{h},\bar{y}'',\lam }+\varphi),\frac{\pa  Z_{\bar{r},\bar{h},\bar{y}'',\lam }} {\pa \lam }\Big\rangle
				=\Big\langle I'(Z_{\bar{r},\bar{h},\bar{y}'',\lam }),\frac{\pa  Z_{\bar{r},\bar{h},\bar{y}'',\lam }} {\pa \lam }\Big\rangle +O\Big(\frac{k}{\lam ^{2s+1+\var }}\Big),
			\end{align*}
			which, together with Lemma \ref{lemB.1}  yields that \eqref{lemB.2.1}. The proof is completed.
		\end{proof}
		Similarly, we can prove the following:
		\begin{align}\label{r}
			&\Big\langle I'(Z_{\bar{r},\bar{h},\bar{y}'',\lam }+\varphi),\frac{\pa  Z_{\bar{r},\bar{h},\bar{y}'',\lam }} {\pa \bar{r}}\Big\rangle \notag\\=&2k\Big(\frac{sB'_1}{\lam ^{2s}}+\sum_{j=2}^{k}\frac{B'_2}{\bar{r}\lam ^{n-2s}|x_j^+-x_1^+|^{n-2s}}
			+\sum_{j=1}^{k}\frac{B'_2}{\bar{r}\lam ^{n-2s}|x_j^--x_1^+|^{n-2s}}+O\Big(\frac{1}{\lam ^{s+\var }}\Big)\Big)
		\end{align}
		and
		\begin{align}\label{y}
			\Big\langle I'(Z_{\bar{r},\bar{h},\bar{y}'',\lam }+\varphi),\frac{\pa  Z_{\bar{r},\bar{h},\bar{y}'',\lam }} {\pa \bar{y}''_j}\Big\rangle =2k\Big(\frac{sB'_1}{\lam ^{2s}}+O\Big(\frac{1}{\lam ^{s+\var }}\Big)\Big).
		\end{align}
		Next, we will estimate \eqref{3.5} and \eqref{3.6} and find their equivalent identities.
		
		For \eqref{3.5}, integrating by parts we have
		\begin{align*}
			\int_{B_\rho}\Big(-V(r,y'')u_k+K(r,y'')(u_k)_+^{2_s^*-1}\Big)\frac{\pa u_k}{\pa y_i}
			=&-\frac{1}{2}\int_{\pa B_\rho}V(r,y'')u_k^2\nu_i+\frac{1}{2_s^*}\int_{\pa B_\rho}K(r,y'')(u_k)_+^{2_s^*} \nu_i\\& +\frac{1}{2}\int_{B_\rho}\frac{\pa V(r,y'')}{\pa y_i}u_k^2- \frac{1}{2_s^*}\int_{B_\rho}\frac{\pa K(r,y'')}{\pa y_i}(u_k)_+^{2_s^*}.
		\end{align*}
		
		Hence, we have that \eqref{3.5} is equivalent to
		\be\label{3.20}
		\begin{aligned}
			\frac{1}{2}\int_{B_\rho}\frac{\pa V(r,y'')}{\pa y_i}u_k^2-\frac{1}{2_s^*}\int_{B_\rho}\frac{\pa K(r,y'')}{\pa y_i}(u_k)_+^{2_s^*} =&\frac{1}{2}\int_{\pa B_\rho}V(r,y'')u_k^2\nu_i -\frac{1}{2_s^*}\int_{\pa B_\rho}K(r,y'')(u_k)_+^{2_s^*-1} \nu_i\\&-\int_{\pa''\mathcal B_\rho^+} t^{1-2s}\frac{\pa\tilde{u}_k}{\pa \nu}\frac{\pa\tilde{u}_k}{\pa y_i}+\frac{1}{2}\int_{\pa''\mathcal B_\rho^+} t^{1-2s}|\nabla\tilde{u}_k|^2\nu_i,
		\end{aligned}
		\ee
		for $i=4,\ldots,n$.
		
		On the other hand, from \eqref{3.1}, we can obtain
		\begin{equation*}
			\begin{aligned}
				\int_{\pa\mathcal B_\rho^+} t^{1-2s}\frac{\pa\tilde{u}_k}{\pa \nu}\tilde{u}_k  =&\int_{\pa''\mathcal B_\rho^+}t^{1-2s}\frac{\pa\tilde{u}_k}{\pa \nu}\tilde{u}_k \\
				&+ \int_{B_\rho}\Big(-V(r,y'')u_k+K(r,y'')(u_k)_+^{2_s^*-1}+\sum_{l=2}^n
				c_l\sum_{j=1}^k(Z_{x_j^+,\lam}^{2_s^*-2}Z_{j,l}^+ +Z_{x_j^-,\lam}^{2_s^*-2}Z_{j,l}^-)\Big)u_k.
			\end{aligned}
		\end{equation*}
		A direct computation gives
		\begin{equation*}
			\begin{aligned}
				\int_{B_\rho}V(r,y'')u_k\langle \nabla u_k, y\rangle
				=&\frac{1}{2}\int_{\pa B_\rho}V(r,y'')u_k^2\langle y,\nu\rangle \\&- \frac{1}{2}\int_{B_\rho}\langle y,\nabla V(r,y'')\rangle u_k^2 - \frac{n}{2}\int_{B_\rho}V(r,y'')u_k^2,
			\end{aligned}
		\end{equation*}
		and
		\begin{equation*}
			\begin{aligned}
				\int_{B_\rho}K(r,y'')(u_k)_+^{2_s^*-1}\langle \nabla u_k, y\rangle
				=&\frac{1}{2_s^*}\int_{\pa B_\rho}K(r,y'')(u_k)_+^{2_s^*} \langle y,\nu\rangle \\&- \frac{1}{2_s^*}\int_{B_\rho}\langle y,\nabla K(r,y'')\rangle (u_k)_+^{2_s^*} - \frac{n}{2_s^*}\int_{B_\rho}K(r,y'')(u_k)_+^{2_s^*},
			\end{aligned}
		\end{equation*}
		which, together with
		\begin{equation*}
			\sum_{l=2}^n c_l \sum_{j=1}^k\int (Z_{x_j^+,\lam}^{2_s^*-2}Z_{j,l}^+ + Z_{x_j^-,\lam}^{2_s^*-2}Z_{j,l}^-)\varphi=0,
		\end{equation*}
		yields that \eqref{3.6} is equivalent to
		\begin{align}
			&\int_{B_\rho}\Big(sV(r,y'')+\frac{1}{2}\langle y,\nabla V(r,y'')\rangle\Big) u_k^2 -\frac{1}{2_s^*}\int_{B_\rho}\langle \nabla K(r,y''), y\rangle(u_k)_+^{2_s^*} \notag\\=&-\int_{\pa''\mathcal B_\rho^+}t^{1-2s}\langle\nabla\tilde{u}_k,Y\rangle\frac{\pa\tilde{u}_k}{\pa \nu}+\frac{1}{2}\int_{\pa''{\mathcal B_\rho^+}}t^{1-2s}|\nabla\tilde{u}_k|^2\langle Y,\nu\rangle 
			+\frac{2s-n}{2}\int_{\pa''{\mathcal B_\rho^+}}t^{1-2s}\frac{\pa\tilde{u}_k}{\pa \nu}\tilde{u}_k\notag\\&
			-\frac{1}{2_s^*}\int_{\pa B_\rho}K(r,y'')(u_k)_+^{2_s^*} \langle y,\nu\rangle+\frac{1}{2}\int_{\pa B_\rho}V(r,y'')u_k^2\langle y,\nu\rangle\notag\\
			&+ \frac{2s-n}{2}\sum_{l=2}^n c_l \int_{B_\rho}\Big(\sum\limits_{j=1}^k\big(Z_{x_j^+,\lam}^{2_s^*-2}Z_{j,l}^+ +Z_{x_j^-,\lam}^{2_s^*-2}Z_{j,l}^-\big)\Big)Z_{\bar{r},\bar{h},\bar{y}'',\lam}.\label{3.19}
		\end{align}
		\begin{lem}\label{lem3.2}
			Relations \eqref{3.20} and \eqref{3.19} are, respectively, equivalent to
			\begin{equation}\label{3.22}
				\frac{1}{2}\int_{B_\rho}\frac{\pa V(r,y'')}{\pa y_i}u_k^2-\frac{1}{2_s^*}\int_{B_\rho}\frac{\pa K(r,y'')}{\pa y_i}(u_k)_+^{2_s^*}= O\Big(\frac{k}{\lam^{2s+\var}}\Big), \quad i=4,\ldots,n.
			\end{equation}
			and
			\begin{equation}\label{3.21}
				\int_{B_\rho}(sV(r,y'')+\frac{1}{2}\langle y,\nabla V(r,y'')\rangle) u_k^2-\frac{1}{2_s^*}\int_{B_\rho}\langle \nabla K(r,y''), y\rangle(u_k)_+^{2_s^*} = O\Big(\frac{k}{\lam^{2s+\var}}\Big)
			\end{equation}
		\end{lem}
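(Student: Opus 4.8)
The plan is to note first that, since $u_k=Z_{\bar r,\bar h,\bar y'',\lam}+\varphi_{\bar r,\bar h,\bar y'',\lam}$ produced by the reduction solves \eqref{2.21}, its extension $\tilde u_k$ solves \eqref{3.1}; carrying out the two Pohozaev multiplier computations on $\mathcal B_\rho^+$ and integrating by parts in the bulk integrals, exactly as displayed just before the statement, already yields \eqref{3.20} and \eqref{3.19}. Hence the lemma amounts to showing that every term on the right-hand sides of \eqref{3.20} and \eqref{3.19} is $O(k/\lam^{2s+\var})$. Those terms are of two kinds: (a) surface integrals over $\pa B_\rho$ and $\pa''\mathcal B_\rho^+$ of $Vu_k^2$, $K(u_k)_+^{2_s^*}$, $t^{1-2s}|\nabla\tilde u_k|^2$, $t^{1-2s}\frac{\pa\tilde u_k}{\pa\nu}\frac{\pa\tilde u_k}{\pa y_i}$, $t^{1-2s}\langle\nabla\tilde u_k,Y\rangle\frac{\pa\tilde u_k}{\pa\nu}$ and $t^{1-2s}\frac{\pa\tilde u_k}{\pa\nu}\tilde u_k$, each against a bounded weight ($\nu_i$, $\langle Y,\nu\rangle$ or $\langle y,\nu\rangle$); and (b) the last term of \eqref{3.19}, which carries the multipliers $c_l$.

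For (a) the point is geometric. All the concentration points $x_j^\pm$ lie on the $2$-sphere $\{\,|y'|=\bar r,\ y''=\bar y''\,\}$, which is within $o(1)$ of $\{\,|y'|=r_0,\ y''=y_0''\,\}$; since $\rho\in(2\delta,5\delta)$, this forces $\pa B_\rho$ and $\pa''\mathcal B_\rho^+$ to stay at distance $\gtrsim\delta$ from every $(x_j^\pm,0)$. Away from their centres each bubble $U_{x_j^\pm,\lam}$, its $s$-harmonic extension, and the gradients of both decay like $\lam^{-\frac{n-2s}{2}}(\lam|\cdot|)^{-(n-2s)}$; summing over the $2k$ bubbles by Lemma \ref{lemA.4} and estimating $\tilde\varphi,\nabla\tilde\varphi$ from the Poisson representation \eqref{3.A} and the bound $\|\varphi\|_*\lesssim\lam^{-\frac{2s+1-\beta}{2}-\var}$ of Proposition \ref{prop2.5}, I would get $|\tilde u_k|+|\nabla\tilde u_k|\lesssim k\,\lam^{-\frac{n-2s}{2}}+k\,\|\varphi\|_*\,\lam^{-\tau}$ on both surfaces. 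As these surfaces are compact and $\int_0^\rho t^{1-2s}\,\d t<\infty$, every integral in (a) is then $O\!\big(k^2\lam^{-(n-2s)}+k^2\|\varphi\|_*^2\lam^{-2\tau}\big)$, and using $\lam\sim k^{\frac{n-2s}{n-4s-\al}}$ (respectively $\lam\sim k^{\frac{n-2s}{n-4s}}$ in cases (ii)--(iii)), together with $n>4s+1$ and $0<\beta<1$, shows both terms are $O(k/\lam^{2s+\var})$.

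For (b) I would exploit an exact cancellation. By the scale invariance $\int U_{x,\lam}^{2_s^*}=\int U_{0,1}^{2_s^*}$ (independent of $\lam$) and translation invariance one has $\int U_{x,\lam}^{2_s^*-1}\pa_\lam U_{x,\lam}=0$ and $\int U_{x,\lam}^{2_s^*-1}\pa_{y_i}U_{x,\lam}=0$, so in $\int_{B_\rho}\big(\sum_j(Z_{x_j^+,\lam}^{2_s^*-2}Z_{j,l}^+ +Z_{x_j^-,\lam}^{2_s^*-2}Z_{j,l}^-)\big)Z_{\bar r,\bar h,\bar y'',\lam}$ the ``diagonal'' contribution $\int U_{x_j^\pm,\lam}^{2_s^*-1}Z_{j,l}^\pm$ vanishes to main order, leaving only a cut-off remainder (supported where $\sigma\le|(r,y'')-(r_0,y_0'')|\le 2\sigma$, hence negligible) and the interactions between distinct bubbles, which by \eqref{b2.14}--\eqref{b2.14.1} are $\lam^{n_l}$ times a negative power of $\lam$. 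Consequently this integral is of strictly smaller order than $k\lam^{n_l}$, and multiplying by $|c_l|\lesssim\lam^{-\frac{2s+1-\beta}{2}-n_l-\var}$ from Proposition \ref{prop2.5} makes the last term of \eqref{3.19} equal to $O(k/\lam^{2s+\var})$; the $\varphi$-part of this $c_l$-integral vanishes, up to a negligible tail, by the orthogonality defining $\mathbb H$. Collecting (a) and (b) turns \eqref{3.20} into \eqref{3.22} and \eqref{3.19} into \eqref{3.21}.

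The step I expect to be most delicate is the quantitative bookkeeping in (a) and (b): one must track every power of $k$ generated by the $2k$-fold sums over bubbles --- especially in case (i), where $\sqrt{1-\bar h^2}\to0$ enlarges the interaction sums --- and verify it is absorbed by the negative powers of $\lam$ furnished by $\lam\sim k^{\frac{n-2s}{n-4s-\al}}$. This is exactly where the hypothesis \eqref{0} (equivalently $n>4s+1$, $\tfrac{n-4s}{n-2s}<2s-\tfrac12$ and $\tfrac{n-4s}{n-2s}<\tfrac{n-4s}{2}-\tfrac{s}{n-2s}$) is invoked, with Lemma \ref{lemA.4} providing the precise summation estimates; a secondary technical point is the bound for $\nabla\tilde\varphi$ near $\pa''\mathcal B_\rho^+$, obtained from local regularity for $\operatorname{div}(t^{1-2s}\nabla\tilde\varphi)=0$ away from the concentration set.
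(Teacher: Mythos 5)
Your outline is essentially the intended argument: the paper itself omits the proof of Lemma \ref{lem3.2} and defers to Lemma 3.2 of \cite{DHW2023A}, and what you describe --- surface terms controlled because all $x_j^\pm$ stay at distance $\gtrsim\delta$ from $\pa B_\rho$ and $\pa''\mathcal{B}_\rho^+$ (using the decay of $\tilde Z_{x_j^\pm,\lam}$ as in Lemma \ref{lemA.7} plus Lemma \ref{lemA.4}), together with the smallness of $c_l$ from Proposition \ref{prop2.5} combined with the near-vanishing of the diagonal contributions and the interaction bounds \eqref{b2.14}--\eqref{b2.14.1} for the last term of \eqref{3.19} --- is exactly that argument; your power counting ($k^2\lam^{-(n-2s)}\le k\lam^{-2s-\var}$ iff $k\lesssim\lam^{n-4s-\var}$, guaranteed by $n>4s+1$) checks out, and the crude bound $|c_l|\cdot O(k\lam^{n_l})$ would indeed fail, so the cancellation you invoke in (b) is genuinely needed. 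The one point where you should not improvise: for $\int_{\pa''\mathcal{B}_\rho^+}t^{1-2s}|\nabla\tilde\varphi|^2$ a pointwise bound on $\nabla\tilde\varphi$ up to $t=0$ is delicate because of the degenerate weight and the nontrivial Neumann datum; the paper supplies Lemma \ref{lemA.8}, which gives the needed integrated bound after selecting a good radius $\rho=\rho(\delta)\in(2\delta,5\delta)$ by an averaging argument --- this is precisely why $\rho$ is only asserted to exist in that range --- and you should invoke it rather than local regularity.
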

		
		\begin{proof}
			Since the proof is similar to Lemma 3.2 in \cite{DHW2023A}, we omit it here.
		\end{proof}
		Next, we prove
		\begin{lem}\label{lem3.3}
			For any function $g(r,y'') \in C^1(\Rn)$, there holds
			$$
			\int_{B_\rho}g(r,y'')u_k^{2_s^*} = 2k\Big(g(\bar{r},\bar{y}'')\int U_{0,1}^{2_s^*}+o(1)\Big),
			$$
			and 
			$$\int_{B_\rho}g(r,y'')u_k^2 = 2k\Big(\frac{1}{\lam ^{2s}}g(\bar{r},\bar{y}'')\int U_{0,1}^2+o\Big(\frac{1}{\lam ^{2s}}\Big)\Big).$$
		\end{lem}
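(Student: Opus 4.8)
Throughout put $Z:=Z_{\bar{r},\bar{h},\bar{y}'',\lam}=\sum_{j=1}^{k}(\eta U_{x_j^+,\lam}+\eta U_{x_j^-,\lam})$ and $\varphi:=\varphi_{\bar{r},\bar{h},\bar{y}'',\lam}$, so $u_k=Z+\varphi$ with $\|\varphi\|_{*}\lesssim\lam^{-\frac{2s+1-\beta}{2}-\var}$ by Proposition \ref{prop2.5}. I would prove the two expansions in parallel, letting $p$ denote $2_s^*$ (first identity) or $2$ (second identity). The first move is to peel off the contribution of $\varphi$: since $p\ge2$, $|u_k^{p}-Z^{p}|\lesssim|Z|^{p-1}|\varphi|+|\varphi|^{p}$, so H\"older's inequality bounds $|\int_{B_\rho}g(u_k^{p}-Z^{p})|$ by $\|Z\|_{L^{p}(B_\rho)}^{p-1}\|\varphi\|_{L^{p}(B_\rho)}+\|\varphi\|_{L^{p}(B_\rho)}^{p}$. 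The norm bounds $\|Z\|_{L^{2_s^*}(B_\rho)}^{2_s^*}\lesssim k$, $\|Z\|_{L^{2}(B_\rho)}^{2}\lesssim k\lam^{-2s}$, $\|\varphi\|_{L^{2_s^*}(B_\rho)}\lesssim k^{1/2_s^*}\|\varphi\|_{*}$ and $\|\varphi\|_{L^{2}(B_\rho)}^{2}\lesssim k\|\varphi\|_{*}^{2}\lam^{-2\tau}$ are obtained exactly as in the estimates behind Lemma \ref{lem2.3} (H\"older plus the weighted bounds of the Appendix and \eqref{b2.14}--\eqref{b2.14.1}), the last of them using $\tau<s$, which \eqref{0} guarantees via $\tau<\frac{n-4s}{2(n-2s)}<s$. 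Then the $\varphi$-corrections are $O(k\|\varphi\|_{*})=o(k)$ for the first identity, and $O(k\|\varphi\|_{*}\lam^{-s-\tau})+O(k\|\varphi\|_{*}^{2}\lam^{-2\tau})=o(k\lam^{-2s})$ for the second (here I need the sharp decay of $\|\varphi\|_{*}$ together with $\beta<1$). This reduces the lemma to computing $\int_{B_\rho}gZ^{2_s^*}$ and $\int_{B_\rho}gZ^{2}$.

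Next I would discard the bubble interactions. Writing $|Z^{p}-\sum_{j=1}^{k}(\eta^{p}U_{x_j^+,\lam}^{p}+\eta^{p}U_{x_j^-,\lam}^{p})|\lesssim\sum_{i\ne j}U_{x_i^\pm,\lam}^{p-1}U_{x_j^\pm,\lam}$ (sums over the $2k$ bubbles) and using the standard interaction estimates $\int U_{x_i^\pm,\lam}^{2_s^*-1}U_{x_j^\pm,\lam}\lesssim(\lam|x_i^\pm-x_j^\pm|)^{-(n-2s)}$ and $\int U_{x_i^\pm,\lam}U_{x_j^\pm,\lam}\lesssim\lam^{-2s}(\lam|x_i^\pm-x_j^\pm|)^{-(n-4s)}$ (the latter valid since $n>4s$), the inner sums over $j$ are controlled by Lemma \ref{lemA.4} and the geometry \eqref{1.5} exactly as in \eqref{b2.14}--\eqref{b2.14.1}: $\sum_j(\lam|x_1^+-x_j^\pm|)^{-(n-2s)}=O(\lam^{-2s})$ and $\sum_j(\lam|x_1^+-x_j^\pm|)^{-(n-4s)}=O(\lam^{-2s(n-4s)/(n-2s)})$, both $o(1)$, where I use $n-2s>1$ and $n-4s>1$ (both consequences of $n>4s+1$). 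Hence the interaction remainders contribute $o(k)$, resp.\ $o(k\lam^{-2s})$, and it remains to evaluate a single-bubble piece.

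For that I would exploit the symmetry --- rotation in $(y_1,y_2)$ and the reflection $y_3\mapsto-y_3$, under which $g$, $\eta$ and $B_\rho$ are invariant while $x_j^+\leftrightarrow x_j^-$ --- so all $2k$ integrals $\int_{B_\rho}g\,\eta^{p}U_{x_j^\pm,\lam}^{p}$ coincide, and it is enough to treat $\int_{B_\rho}g\,\eta^{p}U_{x_1^+,\lam}^{p}$. Since $|(\bar{r},\bar{y}'')-(r_0,y_0'')|\le\theta_k\to0$ and $|(x_1^+)'|=\bar{r}$, for $k$ large the centre $x_1^+$ lies in $\{|(r,y'')-(r_0,y_0'')|\le\sigma\}$, where $\eta\equiv1$, so $\eta^{p}U_{x_1^+,\lam}^{p}=U_{x_1^+,\lam}^{p}$ there; on the complement $U_{x_1^+,\lam}^{p}$ is already small enough that its integral is $O(\lam^{-n})$ for $p=2_s^*$ and $O(\lam^{-2s}(\sigma\lam)^{-(n-4s)})$ for $p=2$, and $B_\rho$ contains the support of $\eta$ for $\rho$ in the admissible range. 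Rescaling $y=x_1^++z/\lam$ turns $\int_{\Rn}U_{x_1^+,\lam}^{2_s^*}$ into $\int U_{0,1}^{2_s^*}$ and $\int_{\Rn}U_{x_1^+,\lam}^{2}$ into $\lam^{-2s}\int U_{0,1}^{2}$; and since $g$ is Lipschitz on the compact set $\overline{B_\rho}$ and $|(|y'|,y'')-(\bar{r},\bar{y}'')|\le|y-x_1^+|=|z|/\lam$, replacing $g(r,y'')$ by $g(\bar{r},\bar{y}'')$ costs $\lesssim\lam^{-1}\int|z|U_{0,1}^{2_s^*}=O(\lam^{-1})$, resp.\ $\lesssim\lam^{-2s-1}\int|z|U_{0,1}^{2}=O(\lam^{-2s-1})$ --- and here $n>4s+1$ re-enters, being exactly what makes $\int|z|U_{0,1}^{2}\,\d z<\infty$ (as $|z|U_{0,1}^{2}\sim|z|^{1-2(n-2s)}$ at infinity). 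Multiplying by $2k$ and collecting the $o(k)$, resp.\ $o(k\lam^{-2s})$, errors accumulated in the three steps yields the claim.

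The step I expect to be the main obstacle is the second identity: its leading term is only of size $k\lam^{-2s}$, so \emph{every} error --- the $g$-Lipschitz remainder, the bubble interactions, and above all the linear term $\int_{B_\rho}gZ\varphi$ --- must be shown to be of strictly smaller order than $k\lam^{-2s}$. This is precisely what forces the use of the sharp bound $\|\varphi\|_{*}\lesssim\lam^{-(2s+1-\beta)/2-\var}$ from Proposition \ref{prop2.5} together with the localization gain $\|\varphi\|_{L^2(B_\rho)}\lesssim\sqrt{k}\,\|\varphi\|_{*}\lam^{-\tau}$ (obtained by integrating the weight defining $\|\cdot\|_{*}$ only over the bounded set $B_\rho$), and the resulting exponent inequalities close only because $\beta=\al/(n-2s)<1$ and $n>4s+1$.
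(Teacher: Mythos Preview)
Your argument is correct and follows the standard three-step route --- peel off $\varphi$, discard interactions, reduce to a single rescaled bubble --- that the omitted proof (delegated in the paper to \cite{L2024}) would also take. The exponent bookkeeping you flag as the delicate point is handled properly: the cross-term bound $k\|\varphi\|_{*}\lam^{-s-\tau}=o(k\lam^{-2s})$ reduces to $(2s+1-\beta)/2+\var>s-\tau$, i.e.\ $(1-\beta)/2+\tau+\var>0$, which is immediate from $\beta<1$, and your observation that $\int|z|U_{0,1}^{2}\,\d z<\infty$ requires exactly $n>4s+1$ pinpoints where that hypothesis enters the second identity.
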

		
		\begin{proof}
			Since the proof is similar to Lemma 3.3 in \cite{L2024}, we omit it here.
		\end{proof}
		
		\subsection{Proof of Theorem \ref{theo1.1}}
		Now we will complete the proof of Theorem \ref{theo1.1}.
		
		Apply Lemma \ref{lem3.3} to \eqref{3.21} and \eqref{3.22}, we obtain the equations to determine $(\bar{r}, \bar{y}'')$ are
		\be \label{3.37}
		\frac{\pa  K(\bar{r}, \bar{y}'')}{\pa  \bar{r}}=o(1),
		\ee
		and
		\be \label{3.38}
		\frac{\pa  K(\bar{r}, \bar{y}'')}{\pa  \bar{y}_i''}=o(1), \quad i=4, \ldots, n .
		\ee
		From \eqref{3.7} and Lemma \ref{lemB.2}, the equation to determine $\lam$ is
		\be \label{3.39}
		-\frac{sB_1}{\lam ^{2s+1}}+\frac{B_3k^{n-2s}}{\lam ^{n-2s+1}(\sqrt{1-\bar{h} ^2})^{n-2s}}+\frac{B_4k}{\lam ^{n-2s+1}\bar{h}^{n-2s-1}\sqrt{1-\bar{h} ^2}}=O\Big(\frac{1}{\lam ^{2s+1+\var }}\Big).
		\ee
		
		\begin{proof}[Proof of Theorem \ref{theo1.1}]Through the discussion above, we have proved that \eqref{3.5}, \eqref{3.6} and \eqref{3.7} are equivalent to \eqref{3.37}, \eqref{3.38} and \eqref{3.39}.

			Letting $\lam =tk^{\frac{n-2s}{n-4s-\al }}$ with $\al =n-4s-\nu$,
			then $t\in[L_0,L_1]$. From \eqref{3.39}, we have for each $\bar{h}$ satisfying $\sqrt{1-\bar{h} ^2}=\frac{1}{M_1\lam ^{\frac{\al }{n-2s}}}$,
			\begin{equation*}
				-\frac{sB_1}{t^{2s+1}}+\frac{B_3M_1^{n-2s}}{t^{n-2s+1-\al }}=o(1), \quad t\in[L_0,L_1].
			\end{equation*}
			
			Define
			\begin{equation*}
				F(t,\bar{r},\bar{y}'')=\Big(\nabla_{\bar{r},\bar{y}''}K(\bar{r},\bar{y}''),-\frac{sB_1}{t^{2s+1}}+\frac{B_3M_1^{n-2s}}{t^{n-2s+1-\al }}\Big).
			\end{equation*}
			Then, it holds
			\begin{equation*}
				\deg\Big(F(t,\bar{r},\bar{y}''),[L_0,L_1]\times B_{\theta_k}(r_0,y''_0)\Big)
				=-\deg\Big(\nabla_{\bar{r},\bar{y}''}K(\bar{r},\bar{y}''),B_{\theta_k}(r_0,y''_0)\Big)\neq0.
			\end{equation*}
			Hence, \eqref{3.37}, \eqref{3.38} and \eqref{3.39} have a solution $t_k\in[L_0,L_1]$, $(\bar{r}_k,\bar{y}''_k)\in B_{\theta_k}(r_0,y''_0)$.
		\end{proof}
		
		\subsection{Proof of Theorem \ref{theo1.4}}
		In this part, we give a brief proof of Theorem \ref{theo1.4}. We assume that $k>0$ is a large integer, $\lam  \in [L'_0k^{\frac{n-2s}{n-4s}},L'_1k^{\frac{n-2s}{n-4s}}]$ for some constant $L'_1>L'_0>0$ and $(\bar{r},\bar{h},\bar{y}'')$ satisfies \eqref{1.7}.
		We define $\tau=\frac{n-4s}{2(n-2s)}$.

		Actually the proof of Theorem \ref{theo1.4} has the same reduction structure as that of Theorem \ref{theo1.1}
		in section \ref{sec:2}. The main difference between the two proofs is how to deal with some problems arising from the distance between these points $\{x_j^+\}_{j=1}^k$ and $\{x_j^-\}_{j=1}^k$. Since the distance between $x_i^+$ and $x_j^+$, for $i\neq j$, will be closer in case (i) and the distance between $x_i^+$ and $x_i^-$ will be closer in case (iii). Therefore, in order to avoid some tedious steps, we just point out that some important estimates, about dealing with the distance between these points $\{x_j^+\}_{j=1}^k$ and $\{x_j^-\}_{j=1}^k$, are still valid in this section under the assumption \eqref{1.7}.
		\begin{proof}[Proof of Theorem \ref{theo1.4}.] We can verify that \eqref{a2.14} and \eqref{a2.14.1} are still valid provided that $\bar{h}$ satisfies \eqref{1.7} and $\frac{n-4s}{n-2s}\leq\bar{\gamma}<1$. Indeed, It follows from Lemma \ref{lemA.4} and \eqref{1.7}
			\begin{equation*}
				\sum_{j=2}^k\frac{1}{(\lam |x_1^+-x_j^+|)^{\bar{\gamma}}}
				\leq \frac{Ck}{(\lam \sqrt{1-\bar{h}^2})^{\bar{\gamma}}}
				\leq \frac{Ck}{\lam ^{\bar{\gamma}}}\leq C,
			\end{equation*}
			\begin{equation*}
				\begin{aligned}
					\sum_{j=1}^k \frac{1}{(\lam |x_1^+-x_j^-|)^{\bar{\gamma}}}
					&\leq\sum_{j=2}^k\frac{1}{(\lam |x_1^+-x_j^+|)^{\bar{\gamma}}}+\frac{C}{(\lam \bar{r}\bar{h})^{\bar{\gamma}}}\leq C.
				\end{aligned}
			\end{equation*}
			
			Similarly, \eqref{b2.14} and \eqref{b2.14.1} are valid for $\bar{h}$ satisfying \eqref{1.7} and $\bar{\gamma}>1$, that is,
			\begin{equation*}
				\sum_{j=2}^k\frac{1}{(\lam |x_1^+-x_j^+|)^{\bar{\gamma}}}
				\leq \frac{Ck^{\bar{\gamma}}}{(\lam \sqrt{1-\bar{h}^2})^{\bar{\gamma}}}
				\leq \frac{Ck^{\bar{\gamma}}}{\lam ^{\bar{\gamma}}}=O\Big(\frac{1}{\lam ^\frac{2s\bar{\gamma}}{n-2s}}\Big),
			\end{equation*}
			and
			\begin{equation*}
				\sum_{j=1}^k \frac{1}{(\lam |x_1^+-x_j^-|)^{\bar{\gamma}}}
				\leq \frac{Ck^{\bar{\gamma}}}{(\lam \sqrt{1-\bar{h}^2})^{\bar{\gamma}}}+\frac{C}{(\lam \bar{r}\bar{h})^{\bar{\gamma}}}=O\Big(\frac{1}{\lam ^\frac{2s\bar{\gamma}}{n-2s}}\Big).
			\end{equation*}
			
			If $\bar{h}$ satisfying \eqref{1.7}, together with Lemma \ref{lemA.b2}, then we have
			\begin{equation*}
				|Z_{j,2}^\pm|\leq C\lam ^{-\beta'}Z_{x_j^{\pm},\lam },
			\end{equation*}
			where $\beta'=\frac{n-4s}{n-2s}$.
			
			By the similar steps in the proof of Theorem \ref{theo1.1}, we can check that \eqref{3.5}, \eqref{3.6} and \eqref{3.7} are equivalent to
			\be \label{4.3}
			\frac{\pa  K(\bar{r}, \bar{y}'')}{\pa  \bar{r}}=o(1),
			\ee
			\be \label{4.4}
			\frac{\pa  K(\bar{r}, \bar{y}'')}{\pa  \bar{y}_i''}=o(1), \quad i=4, \ldots, n .
			\ee
			and
			\be \label{4.5}
			-\frac{sB_1}{\lam ^{2s+1}}+\frac{B_3k^{n-2s}}{\lam ^{n-2s+1}(\sqrt{1-\bar{h} ^2})^{n-2s}}+\frac{B_4k}{\lam ^{n-2s+1}\bar{h}^{n-2s-1}\sqrt{1-\bar{h} ^2}}=O\Big(\frac{1}{\lam ^{2s+1+\var }}\Big).
			\ee
			
			Let $\lam =tk^{\frac{n-2s}{n-4s}}$, then $t\in[L'_0,L'_1]$.
			
			Next, we discuss the main items in \eqref{4.5}.

			Case I: If $(\lam ^{\frac{n-4s}{n-2s}}\bar{h})^{-1}=o(1)$ and $\bar{h}=a\in(0,1)$ as $\lam  \to \infty$, then we get
			\be \label{4.6}
			-\frac{sB_1}{t^{2s+1}}+\frac{B_3}{t^{n-2s+1}(\sqrt{1-a^2})^{n-2s}}=o(1), \quad t\in[L'_0,L'_1].
			\ee
			Define
			\begin{equation*}
				F(t,\bar{r},\bar{y}'')=\Big(\nabla_{\bar{r},\bar{y}''}K(\bar{r},\bar{y}''),-\frac{sB_1}{t^{2s+1}}+\frac{B'_3}{t^{n-2s+1}}\Big).
			\end{equation*}
			Then, it holds
			\be \label{4.7}
			\deg\Big(F(t,\bar{r},\bar{y}''),[L_0,L_1]\times B_{\bar{\theta}_k}(r_0,y''_0)\Big)
			=-\deg\Big(\nabla_{\bar{r},\bar{y}''}K(\bar{r},\bar{y}''),B_{\bar{\theta}_k}(r_0,y''_0)\Big)\neq0.
			\ee
			Hence, \eqref{4.3}, \eqref{4.4} and \eqref{4.5} have a solution $t_k\in[L'_0,L'_1]$, $(\bar{r}_k,\bar{y}''_k)\in B_{\bar{\theta}_k}(r_0,y''_0)$.
			
			Case II: If $(\lam ^{\frac{n-4s}{n-2s}}\bar{h})^{-1} \in (C_1,M_2)$ for some positive constants $C_1$ and large $\lam $, then $\bar{h}=o(1)$ as $\lam  \to \infty$. In fact, in this case, $B_4$ defined in \eqref{4.5} may not be a constant, but we can take a subsequence $\lam _k$ such that $\lim\limits_{\lam \to\infty}(\lam ^{\frac{n-4s}{n-2s}}\bar{h})^{-1}=A\in (C_1,M_2)$. Thus we have
			\begin{equation*}
				-\frac{sB_1}{t^{2s+1}}+\frac{B_3}{t^{n-2s+1}}+\frac{B_4A^{n-2s-1}}{t^{2s+1+\frac{n-4s}{n-2s}}}=o(1), \quad t\in[L'_0,L'_1].
			\end{equation*}
			
			Since $n-2s+1$ and $2s+1+\frac{n-4s}{n-2s}$ are strictly greater than $2s+1$ for $n> 4s+1$, there exists a solution of \eqref{4.3} to \eqref{4.5} like before. Hence, according to Proposition \ref{prop3.1}, we have proved our desired result.
		\end{proof}
		\begin{proof}[Proof of Corollary \ref{cor1.1}.] Corollary \ref{cor1.1} can be proved similarly to Theorem \ref{theo1.4}, we omit it here.\end{proof}
		\appendix
		
		\section{Basic Estimates}\label{secA}
		In this section, we give some essential estimates that can be found in some appropriate references.
		\begin{lem}(Lemma B.1, \cite{wei2010infinitely})\label{lemA.1}
			For $x_i,x_j,y \in \Rn$, define
			$$
			g_{i,j}(y)=\frac{1}{(1+|y-x_i|)^{\kappa_1}}\frac{1}{(1+|y-x_j|)^{\kappa_2}},
			$$
			where $x_i\neq x_j$, $\kappa_1\geq1$ and $\kappa_2\geq1$ are two constants. Then for any constant $0<\varsigma\leq \min(\kappa_1,\kappa_2)$, there is a constant $C>0$, such that
			\begin{equation*}
				g_{i,j}(y)\leq\frac{C}{|x_i-x_j|^\varsigma}\Big(\frac{1}{(1+|y-x_i|)^{\kappa_1+\kappa_2-\varsigma}}+\frac{1}{(1+|y-x_j|)^{\kappa_1+\kappa_2-\varsigma}}\Big).
			\end{equation*}
		\end{lem}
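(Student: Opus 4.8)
This is a classical elementary estimate; the plan is a one-line case split exploiting that if $y$ is far from $x_i$ then, by the triangle inequality, it is comparably far from $x_j$. Since the asserted bound is symmetric under interchanging $(x_i,\kappa_1)\leftrightarrow(x_j,\kappa_2)$, I would first reduce to the half-space region $\{|y-x_i|\le|y-x_j|\}$, prove there the single-term estimate
$$g_{i,j}(y)\le\frac{C}{|x_i-x_j|^{\varsigma}}\,\frac{1}{(1+|y-x_i|)^{\kappa_1+\kappa_2-\varsigma}},$$
and then obtain the full statement by applying the same estimate with the roles of $x_i,x_j$ reversed on the complementary region $\{|y-x_j|\le|y-x_i|\}$ and adding the two bounds.

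In the region $|y-x_i|\le|y-x_j|$ the triangle inequality gives $|x_i-x_j|\le|y-x_i|+|y-x_j|\le 2|y-x_j|$, hence $1+|x_i-x_j|\le 2(1+|y-x_j|)$. I would then peel off a power $\varsigma$ from the $x_j$-factor, using only $0<\varsigma\le\kappa_2$:
$$\frac{1}{(1+|y-x_j|)^{\kappa_2}}=\frac{1}{(1+|y-x_j|)^{\kappa_2-\varsigma}}\cdot\frac{1}{(1+|y-x_j|)^{\varsigma}}\le\frac{1}{(1+|y-x_j|)^{\kappa_2-\varsigma}}\cdot\frac{2^{\varsigma}}{(1+|x_i-x_j|)^{\varsigma}}.$$
To convert the leftover factor into decay centred at $x_i$, use $|y-x_j|\ge|y-x_i|$ together with $\kappa_2-\varsigma\ge 0$ to get $(1+|y-x_j|)^{-(\kappa_2-\varsigma)}\le(1+|y-x_i|)^{-(\kappa_2-\varsigma)}$. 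Multiplying by the untouched factor $(1+|y-x_i|)^{-\kappa_1}$ then yields
$$g_{i,j}(y)\le\frac{2^{\varsigma}}{(1+|x_i-x_j|)^{\varsigma}}\,\frac{1}{(1+|y-x_i|)^{\kappa_1+\kappa_2-\varsigma}}\le\frac{2^{\varsigma}}{|x_i-x_j|^{\varsigma}}\,\frac{1}{(1+|y-x_i|)^{\kappa_1+\kappa_2-\varsigma}},$$
so the claim holds with $C=2^{\varsigma}$.

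There is no genuine obstacle: the argument is elementary and the constant is explicit. The only subtlety worth flagging is that one must retain the full exponent $\kappa_1+\kappa_2-\varsigma$ rather than discarding decay — this is exactly what the monotonicity step $(1+|y-x_j|)^{-(\kappa_2-\varsigma)}\le(1+|y-x_i|)^{-(\kappa_2-\varsigma)}$ achieves, and it is precisely here that $\varsigma\le\min(\kappa_1,\kappa_2)$ enters (the symmetric half uses $\varsigma\le\kappa_1$). Finally, the use of $1+|\cdot|$ rather than $|\cdot|$ in the weights makes the estimate valid for every pair $x_i\ne x_j$ with no lower bound on $|x_i-x_j|$ required.
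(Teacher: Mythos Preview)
Your proof is correct. The paper does not give its own proof of this lemma: it is stated as Lemma~B.1 of Wei--Yan \cite{wei2010infinitely} and used as a black box. Your case split on $\{|y-x_i|\le|y-x_j|\}$ followed by the triangle-inequality bound $1+|x_i-x_j|\le 2(1+|y-x_j|)$, peeling a power $\varsigma$ from the far factor, and then using monotonicity to transfer the residual decay $(1+|y-x_j|)^{-(\kappa_2-\varsigma)}\le(1+|y-x_i|)^{-(\kappa_2-\varsigma)}$ is exactly the standard argument that underlies the cited lemma, and yields the explicit constant $C=2^{\varsigma}$. One minor remark: your argument never uses the hypotheses $\kappa_1\ge1$, $\kappa_2\ge1$---only $0<\varsigma\le\min(\kappa_1,\kappa_2)$ is needed---so you have in fact established a slightly sharper statement than the one quoted.
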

		\begin{lem}(Lemma 2.1, \cite{guo2020solutions})\label{lemA.2}
			For any constant $0<\theta<n-2s$, there is a constant $C>0$, such that
			$$
			\int_{\Rn}\frac{1}{|y-z|^{n-2s}}\frac{1}{(1+|z|)^{2s+\theta}}\, \d z \leq \frac{C}{(1+|y|)^{\theta}}.
			$$
		\end{lem}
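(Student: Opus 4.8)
The plan is to establish this standard potential estimate by a direct decomposition of the domain of integration according to the relative sizes of $|y|$, $|z|$ and $|y-z|$; writing $G(y)$ for the left-hand side, every resulting piece reduces to an elementary radial integral. First I would dispose of the bounded regime $|y|\le 1$, where it suffices to show $G(y)\le C$. Splitting $\Rn$ into $\{|z|\le 2\}$ and $\{|z|>2\}$: on the first set the weight $(1+|z|)^{-(2s+\theta)}$ is bounded and $\int_{|z|\le 2}|y-z|^{-(n-2s)}\,\d z<\infty$ since $n-2s<n$ (here the hypothesis $s>0$ enters), while on the second set $|y-z|\gtrsim|z|$, so the integrand is $\lesssim|z|^{-(n-2s)}(1+|z|)^{-(2s+\theta)}\lesssim|z|^{-(n+\theta)}$, which is integrable near infinity because $\theta>0$.

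For the main regime $|y|\ge 1$ I would partition $\Rn$ into the four pieces $A=\{|z|\le|y|/2\}$, $B=\{|y-z|\le|y|/2\}$, $C_1=\{|y|/2<|z|\le 2|y|\}\setminus B$ and $C_2=\{|z|>2|y|\}$, and bound each contribution by $C|y|^{-\theta}$. On $A$ one has $|y-z|\ge|y|/2$, so the contribution is $\lesssim|y|^{-(n-2s)}\int_{|z|\le|y|/2}(1+|z|)^{-(2s+\theta)}\,\d z\lesssim|y|^{-(n-2s)}|y|^{\,n-2s-\theta}=|y|^{-\theta}$, where the condition $2s+\theta<n$ (that is, $\theta<n-2s$) is precisely what makes the radial integral grow like $|y|^{\,n-2s-\theta}$. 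On $B$ one has $|z|\ge|y|/2$, hence the contribution is $\lesssim|y|^{-(2s+\theta)}\int_{|y-z|\le|y|/2}|y-z|^{-(n-2s)}\,\d z\lesssim|y|^{-(2s+\theta)}|y|^{2s}=|y|^{-\theta}$. On $C_1$ both $|z|$ and $|y-z|$ are $\gtrsim|y|$ and the measure of $C_1$ is $\lesssim|y|^n$, so the contribution is $\lesssim|y|^n|y|^{-(n-2s)}|y|^{-(2s+\theta)}=|y|^{-\theta}$. Finally on $C_2$ one has $|y-z|\ge|z|-|y|\ge|z|/2$, so the contribution is $\lesssim\int_{|z|>2|y|}|z|^{-(n-2s)}(1+|z|)^{-(2s+\theta)}\,\d z\lesssim\int_{|z|>2|y|}|z|^{-(n+\theta)}\,\d z\lesssim|y|^{-\theta}$, again using $\theta>0$. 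Adding the four estimates and combining with the bounded regime gives $G(y)\lesssim(1+|y|)^{-\theta}$.

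There is essentially no obstacle here: the statement is a classical Riesz-potential estimate and each region reduces to a one-dimensional integral. The only points needing care are the exponent bookkeeping — the hypothesis $0<\theta<n-2s$ is used twice, once through $\theta>0$ to make the far-field tails $|z|^{-(n+\theta)}$ integrable and once through $2s+\theta<n$ to control the near-origin integral $\int_{|z|\le R}(1+|z|)^{-(2s+\theta)}\,\d z\sim R^{\,n-2s-\theta}$, while $s>0$ guarantees local integrability of $|y-z|^{-(n-2s)}$. Since this is exactly Lemma~2.1 of \cite{guo2020solutions}, one may alternatively just cite it directly.
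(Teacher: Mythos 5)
Your argument is correct and complete: the paper itself gives no proof of this lemma, merely citing it as Lemma 2.1 of \cite{guo2020solutions}, so your self-contained decomposition argument supplies what the paper only quotes. The exponent bookkeeping checks out — $\theta>0$ for the tails, $2s+\theta<n$ for the region $\{|z|\le|y|/2\}$, and $s>0$ for local integrability of $|y-z|^{-(n-2s)}$ — and the four regions do cover $\Rn$ since $\{|y-z|\le|y|/2\}$ sits inside the annulus $\{|y|/2\le|z|\le 2|y|\}$.
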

		Recall that for $j=1,\ldots,k$,
		\be \label{A.0}
		x_j^\pm=\Big(\bar{r} \sqrt{1-\bar{h} ^2} \cos \frac{2(j-1)\pi}{k}, \bar{r}\sqrt{1-\bar{h}^2}\sin\frac{2(j-1)\pi}{k},\pm\bar{r}\bar{h},\bar{y}''\Big).
		\ee

		\begin{lem}(Lemma A.3,\cite{DHW2023A})\label{lemA.b2}
			As $\lam  \to \infty$, we have
			\begin{equation*}
				\begin{aligned}
					\frac{\pa  U_{x_j^\pm,\lam }}{\pa  \lam }=O(\lam ^{-1}U_{x_j^\pm,\lam })+O(\lam  U_{x_j^\pm,\lam })\frac{\pa  \sqrt{1-\bar{h}^2}}{\pa  \lam }+O(\lam  U_{x_j^\pm,\lam })\frac{\pa  \bar{h}}{\pa  \lam }.
				\end{aligned}
			\end{equation*}
			In particular, if $\sqrt{1-\bar{h}^2}=C\lam ^{-\beta_1}$ with $0<\beta_1<1$, then
			\begin{equation*}
				\frac{\pa  U_{x_j^\pm,\lam }}{\pa  \lam }=O( \frac{ U_{x_j^\pm,\lam }}{\lam ^{\beta_1}}).
			\end{equation*}
			If $\bar{h}=C\lam ^{-\beta_2}$ with $\beta_2>0$, then 
			\begin{equation*}
				\frac{\pa  U_{x_j^\pm,\lam }}{\pa  \lam }=O( \frac{ U_{x_j^\pm,\lam }}{\lam ^{\min(1,\beta_2)}}).
			\end{equation*}
		\end{lem}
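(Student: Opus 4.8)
The plan is to prove the first estimate by the chain rule, isolating the contribution of the explicit scaling parameter $\lam$ from that of the $\lam$-dependence of the concentration point $x_j^{\pm}$, which enters only through $\bar{h}$ (the parameters $\bar{r}$ and $\bar{y}''$ being held fixed). Writing $U_{x,\lam}(y)=a_{n,s}\,\lam^{\frac{n-2s}{2}}\big(1+\lam^2|y-x|^2\big)^{-\frac{n-2s}{2}}$ with $a_{n,s}=2^{\frac{n-2s}{2}}\big(\Gamma(\frac{n+2s}{2})/\Gamma(\frac{n-2s}{2})\big)^{\frac{n-2s}{4s}}$, a direct differentiation gives
\[
\frac{\pa U_{x,\lam}}{\pa\lam}=\frac{n-2s}{2\lam}\cdot\frac{1-\lam^2|y-x|^2}{1+\lam^2|y-x|^2}\,U_{x,\lam},
\qquad
\nabla_x U_{x,\lam}=(n-2s)\,a_{n,s}\,\lam^{\frac{n-2s}{2}}\,\frac{\lam^2(y-x)}{\big(1+\lam^2|y-x|^2\big)^{\frac{n-2s}{2}+1}}.
\]
Using the elementary bounds $\big|1-\lam^2|y-x|^2\big|\le 1+\lam^2|y-x|^2$ and $\lam^2|y-x|\le\frac{\lam}{2}\big(1+\lam^2|y-x|^2\big)$, one reads off the pointwise inequalities $\big|\pa_\lam U_{x,\lam}(y)\big|=O(\lam^{-1}U_{x,\lam}(y))$ and $\big|\nabla_x U_{x,\lam}(y)\big|=O(\lam\,U_{x,\lam}(y))$, uniformly in $y\in\Rn$, $x$ and $\lam$.

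Next I would differentiate the point
\[
x_j^{\pm}=\Big(\bar{r}\sqrt{1-\bar{h}^2}\cos\tfrac{2(j-1)\pi}{k},\ \bar{r}\sqrt{1-\bar{h}^2}\sin\tfrac{2(j-1)\pi}{k},\ \pm\bar{r}\bar{h},\ \bar{y}''\Big)
\]
in $\lam$: keeping $\bar{r}$ and $\bar{y}''$ fixed we get $\pa_\lam x_j^{\pm}=\big(\bar{r}\cos\tfrac{2(j-1)\pi}{k}\,\pa_\lam\sqrt{1-\bar{h}^2},\ \bar{r}\sin\tfrac{2(j-1)\pi}{k}\,\pa_\lam\sqrt{1-\bar{h}^2},\ \pm\bar{r}\,\pa_\lam\bar{h},\ 0\big)$, hence $\big|\pa_\lam x_j^{\pm}\big|\lesssim\big|\pa_\lam\sqrt{1-\bar{h}^2}\big|+\big|\pa_\lam\bar{h}\big|$. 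Substituting the two pointwise bounds above into the chain rule
\[
\frac{\pa U_{x_j^{\pm},\lam}}{\pa\lam}=\Big(\frac{\pa U_{x,\lam}}{\pa\lam}\Big)\Big|_{x=x_j^{\pm}}+\Big(\nabla_x U_{x,\lam}\Big)\Big|_{x=x_j^{\pm}}\cdot\frac{\pa x_j^{\pm}}{\pa\lam}
\]
yields exactly
\[
\frac{\pa U_{x_j^{\pm},\lam}}{\pa\lam}=O\big(\lam^{-1}U_{x_j^{\pm},\lam}\big)+O\big(\lam\,U_{x_j^{\pm},\lam}\big)\frac{\pa\sqrt{1-\bar{h}^2}}{\pa\lam}+O\big(\lam\,U_{x_j^{\pm},\lam}\big)\frac{\pa\bar{h}}{\pa\lam},
\]
which is the first claimed identity.

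For the two specializations it only remains to track powers of $\lam$. If $\sqrt{1-\bar{h}^2}=C\lam^{-\beta_1}$ with $0<\beta_1<1$, then $\pa_\lam\sqrt{1-\bar{h}^2}=O(\lam^{-\beta_1-1})$, and since $\bar{h}=\sqrt{1-C^2\lam^{-2\beta_1}}$ stays bounded away from $0$, also $\pa_\lam\bar{h}=O(\lam^{-2\beta_1-1})$; multiplying by the factors $O(\lam\,U_{x_j^{\pm},\lam})$ produces $O(\lam^{-\beta_1}U_{x_j^{\pm},\lam})$ and $O(\lam^{-2\beta_1}U_{x_j^{\pm},\lam})$, and since $\beta_1<1$ both this last term and the term $O(\lam^{-1}U_{x_j^{\pm},\lam})$ coming from the explicit derivative are absorbed into $O(\lam^{-\beta_1}U_{x_j^{\pm},\lam})$ as $\lam\to\infty$. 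If instead $\bar{h}=C\lam^{-\beta_2}$ with $\beta_2>0$, then $\pa_\lam\bar{h}=O(\lam^{-\beta_2-1})$, while $\sqrt{1-\bar{h}^2}=\sqrt{1-C^2\lam^{-2\beta_2}}\to1$ with $\pa_\lam\sqrt{1-\bar{h}^2}=O(\lam^{-2\beta_2-1})$, so the three contributions are $O(\lam^{-1}U_{x_j^{\pm},\lam})$, $O(\lam^{-2\beta_2}U_{x_j^{\pm},\lam})$ and $O(\lam^{-\beta_2}U_{x_j^{\pm},\lam})$, whose sum is $O(\lam^{-\min(1,\beta_2)}U_{x_j^{\pm},\lam})$. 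The argument presents no real difficulty; the only point that needs a little care is checking that the ``auxiliary'' derivative in each specialization --- the one not prescribed in the hypothesis, e.g. $\pa_\lam\bar{h}$ when only $\sqrt{1-\bar{h}^2}$ is given --- is of strictly lower order, which follows from the boundedness below of the relevant denominator ($\bar{h}$ in the first case, $\sqrt{1-\bar{h}^2}$ in the second). These are precisely the forms used afterwards, e.g. in \eqref{2.9} with $\beta_1=\frac{\al}{n-2s}$ and in the proof of Theorem \ref{theo1.4} with exponent $\frac{n-4s}{n-2s}$, so the statement quoted from \cite{DHW2023A} is all that is needed.
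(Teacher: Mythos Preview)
Your argument is correct and is exactly the natural computation: chain rule, explicit formulas for $\pa_\lam U_{x,\lam}$ and $\nabla_x U_{x,\lam}$, the elementary pointwise bounds, and then tracking powers of $\lam$ in the two specializations. The paper itself gives no proof of this lemma --- it is quoted verbatim from \cite{DHW2023A} as Lemma~A.3 there --- so there is nothing to compare against beyond noting that your derivation is the standard one and matches how the result is used later (e.g.\ in \eqref{2.9} and in the proof of Theorem~\ref{theo1.4}).
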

		\begin{lem}\label{lemA.4}
			For any $\gamma>0$, there is a constant $C>0$, such that
			\be \label{A.1}
			\sum_{j=2}^{k}\frac{1}{|x_j^+-x_1^+|^\gamma} \leq \frac{Ck^\gamma}{(\bar{r}\sqrt{1-\bar{h}^2})^\gamma}\sum_{j=2}^{k}\frac{1}{(j-1)^\gamma}
			\leq\left\{
			\begin{array}{ll}
				\frac{Ck^\gamma }{(\bar{r}\sqrt{1-\bar{h}^2})^\gamma}, & \gamma>1; \\
				\frac{Ck^\gamma \text{ln}k}{(\bar{r}\sqrt{1-\bar{h}^2})^\gamma}, & \gamma=1; \\
				\frac{Ck}{(\bar{r}\sqrt{1-\bar{h}^2})^\gamma}, & \gamma<1
			\end{array}
			\right.
			\ee
			and
			\be \label{A.2}
			\sum_{j=1}^{k}\frac{1}{|x_j^--x_1^+|^\gamma} \leq \sum_{j=2}^{k}\frac{1}{|x_j^+-x_1^+|^\gamma}+\frac{C}{(\bar{r}\bar{h})^\gamma}.
			\ee
		\end{lem}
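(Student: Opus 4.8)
The plan is to compute the distances $|x_j^+-x_1^+|$ and $|x_j^--x_1^+|$ directly from the defining formula \eqref{A.0} and then reduce both bounds to the elementary one-dimensional sum $\sum_{m=1}^{k-1}m^{-\gamma}$.

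First I would note that $x_1^+,\dots,x_k^+$ are $k$ equally spaced points (by the angle $2\pi/k$) on the circle of radius $R:=\bar{r}\sqrt{1-\bar{h}^2}$ contained in the plane $\{y_3=\bar{r}\bar{h},\ y''=\bar{y}''\}$, so the chord length is
$$|x_j^+-x_1^+|=2R\sin\frac{(j-1)\pi}{k},\qquad j=1,\dots,k.$$
Since $\sin\frac{(j-1)\pi}{k}=\sin\frac{(k-j+1)\pi}{k}$, the summand is invariant under $j\mapsto k-j+2$, hence
$$\sum_{j=2}^{k}\frac{1}{|x_j^+-x_1^+|^\gamma}\le 2\sum_{2\le j\le \lfloor k/2\rfloor+1}\frac{1}{|x_j^+-x_1^+|^\gamma}.$$
For such $j$ the angle $\frac{(j-1)\pi}{k}$ lies in $[0,\pi/2]$, where $\sin\theta\ge\frac{2}{\pi}\theta$ (Jordan's inequality), so $|x_j^+-x_1^+|\ge\frac{4R(j-1)}{k}$; this gives the first inequality in \eqref{A.1}, i.e. $\sum_{j=2}^k|x_j^+-x_1^+|^{-\gamma}\le C k^\gamma R^{-\gamma}\sum_{j=2}^k(j-1)^{-\gamma}$ with $C$ depending only on $\gamma$. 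The second inequality in \eqref{A.1} is then the classical estimate for $\sum_{m=1}^{k-1}m^{-\gamma}$ obtained by comparison with $\int_1^{k} t^{-\gamma}\,\d t$: bounded if $\gamma>1$, comparable to $\ln k$ if $\gamma=1$, and comparable to $k^{1-\gamma}/(1-\gamma)$ if $\gamma<1$, in which last case $k^\gamma\sum_m m^{-\gamma}\lesssim k$.

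For \eqref{A.2} I would observe that $x_j^-$ and $x_j^+$ agree in every coordinate except the third, where they differ by $2\bar{r}\bar{h}$; as $x_1^+$ has third coordinate $\bar{r}\bar{h}$, this yields $|x_j^--x_1^+|^2=|x_j^+-x_1^+|^2+(2\bar{r}\bar{h})^2$ for all $j$. In particular $|x_j^--x_1^+|\ge|x_j^+-x_1^+|$ for $j\ge2$, while $|x_1^--x_1^+|=2\bar{r}\bar{h}$. Separating the $j=1$ term gives
$$\sum_{j=1}^{k}\frac{1}{|x_j^--x_1^+|^\gamma}\le \frac{1}{(2\bar{r}\bar{h})^\gamma}+\sum_{j=2}^{k}\frac{1}{|x_j^+-x_1^+|^\gamma},$$
which is exactly \eqref{A.2}.

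No step presents a real difficulty here; the only point requiring a little care is to exploit the reflection symmetry $j\mapsto k-j+2$ before invoking $\sin\theta\gtrsim\theta$ (since that lower bound is valid only on $[0,\pi/2]$), and to check throughout that the constant $C$ depends on $n$, $s$, $\gamma$ but not on the number of bubbles $k$.
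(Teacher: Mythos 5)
Your proof is correct and follows essentially the same route as the paper's: the chord-length formula $|x_j^+-x_1^+|=2\bar{r}\sqrt{1-\bar{h}^2}\sin\frac{(j-1)\pi}{k}$, the comparison $\sin\frac{(j-1)\pi}{k}\gtrsim\frac{j-1}{k}$ on the half-range (extended to the full sum by the reflection symmetry, which you make more explicit than the paper does), the standard integral comparison for $\sum_m m^{-\gamma}$, and for \eqref{A.2} the observation that $|x_j^--x_1^+|\geq|x_j^+-x_1^+|$ for $j\geq2$ together with $|x_1^--x_1^+|=2\bar{r}\bar{h}$. No issues.
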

		
		\begin{proof}
			From \eqref{A.0}, we have $|x_j^+-x_1^+|=2\bar{r} \sqrt{1-\bar{h} ^2}\sin\frac{(j-1)\pi}{k}$ for $j=2,\ldots,k$. For any $\gamma>0$, it holds
			\begin{equation*}
				\begin{aligned}
					\sum_{j=2}^{k}\frac{1}{|x_j^+-x_1^+|^\gamma}
					&= \frac{1}{(2\bar{r} \sqrt{1-\bar{h} ^2})^\gamma}\sum_{j=2}^{k}\frac{1}{(\sin\frac{(j-1)\pi}{k})^\gamma}\\
					&\leq \frac{Ck^\gamma}{(\bar{r} \sqrt{1-\bar{h} ^2})^\gamma}\sum_{j=2}^{k}\frac{1}{(j-1)^\gamma}.
				\end{aligned}
			\end{equation*}
			In fact, there exist two constants $c_1, c_2>0$, such that
			\begin{equation*}
				c_1\frac{(j-1)\pi}{k}\leq\sin\frac{(j-1)\pi}{k}\leq c_2\frac{(j-1)\pi}{k}\quad \text{ for } \, j=2,\ldots,\Big[\frac{k}{2}\Big]+1.
			\end{equation*}
			We have proved \eqref{A.1}.
			
			On the other hand, noting that $|x_j^--x_1^+|>|x_j^+-x_1^+|$ for $j=2,\ldots,n$, we have
			\begin{equation*}
				\frac{1}{|x_j^--x_1^+|^\gamma}\leq \frac{1}{|x_j^+-x_1^+|^\gamma}.
			\end{equation*}
			Then \eqref{A.2} follows from \eqref{A.1}.
		\end{proof}

		\begin{lem}(Lemma A.7,\cite{DHW2023A})\label{lemA.3}
			Suppose that $\tau \in (0,\frac{n-2s}{2})$, $y =(y_1,y_2,\ldots,y_n)$. Then there is a small $\theta>0$, such that 
			\begin{equation*}
				\int_{\Rn}\frac{1}{|y-z|^{n-2s}}Z_{\bar{r},\bar{h},\bar{y}'',\lam }^{2_s^*-2}
				\sum_{j=1}^{k}\frac{1}{(1+\lam |z-x_j^\pm|)^{\frac{n-2s}{2}+\tau}}\, \d z \leq C \sum_{j=1}^{k}\frac{1}{(1+\lam |y-x_j^\pm|)^{\frac{n-2s}{2}+\tau+\theta}}.
			\end{equation*}
		\end{lem}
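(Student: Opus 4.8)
The plan is to dominate the left-hand side by a double sum of elementary Riesz-potential integrals for the standard bubbles, and then to control the arising geometric sums over the $2k$ centres by means of Lemma \ref{lemA.4}.

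First I would discard the cut-offs: since $0\le\eta\le1$ and $t\mapsto t^{2_s^*-2}$ is nondecreasing, $Z_{\bar{r},\bar{h},\bar{y}'',\lam}^{2_s^*-2}\le W^{2_s^*-2}$ with $W:=\sum_{j=1}^k\big(U_{x_j^+,\lam}+U_{x_j^-,\lam}\big)$. Combining the elementary bound $U_{x,\lam}(z)\lesssim\lam^{\frac{n-2s}{2}}(1+\lam|z-x|)^{-(n-2s)}$ with the subadditivity of $t\mapsto t^{2_s^*-2}=t^{4s/(n-2s)}$ (available when $2_s^*\le3$) gives
\[
Z_{\bar{r},\bar{h},\bar{y}'',\lam}^{2_s^*-2}(z)\ \lesssim\ \lam^{2s}\sum_{m}\frac{1}{(1+\lam|z-\xi_m|)^{4s}},
\]
where $\{\xi_m\}_{m=1}^{2k}$ enumerates the points $x_j^\pm$; when $2_s^*>3$ subadditivity fails, and one instead partitions $\Rn$ into the regions $\Omega_j^\pm$ of Section \ref{sec:2}, on each of which a single centre dominates $W$ — the routine modification carried out in \cite{DHW2023A}. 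The integral in the statement is then bounded by
\[
\lam^{2s}\sum_{m}\sum_{j}\int_{\Rn}\frac{\d z}{|y-z|^{n-2s}\,(1+\lam|z-\xi_m|)^{4s}\,(1+\lam|z-\xi_j|)^{\frac{n-2s}{2}+\tau}},
\]
and it remains to estimate the diagonal part $m=j$ and the off-diagonal part $m\ne j$.

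For the diagonal part the two weights are centred at the same point, so the inner integral is $\int_{\Rn}|y-z|^{-(n-2s)}(1+\lam|z-\xi_j|)^{-(4s+\frac{n-2s}{2}+\tau)}\,\d z$; after the rescaling $z\mapsto z/\lam$, Lemma \ref{lemA.2} (together with its standard logarithmic extension in the borderline regime) bounds it by $\lam^{-2s}(1+\lam|y-\xi_j|)^{-\min\{\frac{n-2s}{2}+\tau+2s,\ n-2s\}}$, whose exponent strictly exceeds $\frac{n-2s}{2}+\tau$ since $\tau<\frac{n-2s}{2}$; multiplying by the $\lam^{2s}$ prefactor and summing over $j$ then recovers the right-hand side with a genuine gain $\theta>0$. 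For the off-diagonal part, where $\xi_m\ne\xi_j$, I would apply Lemma \ref{lemA.1} (rescaled by $\lam$) with $\kappa_1=4s$, $\kappa_2=\frac{n-2s}{2}+\tau$ and a splitting exponent $\varsigma$, so that the product of the two weights is bounded by
\[
(\lam|\xi_m-\xi_j|)^{-\varsigma}\Big((1+\lam|z-\xi_m|)^{-(4s+\frac{n-2s}{2}+\tau-\varsigma)}+(1+\lam|z-\xi_j|)^{-(4s+\frac{n-2s}{2}+\tau-\varsigma)}\Big);
\]
each of the two resulting terms is integrated against $|y-z|^{-(n-2s)}$ exactly as in the diagonal step, and the coefficients $(\lam|\xi_m-\xi_j|)^{-\varsigma}$ are afterwards summed over $m$ (for $j$ fixed) and over $j$ (for $m$ fixed) via Lemma \ref{lemA.4} and the constraint \eqref{1.5} on $\bar h$.

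The main obstacle is the choice of $\varsigma$. On one hand, $\sum_{m}(\lam|\xi_m-\xi_j|)^{-\varsigma}$ stays bounded as $k\to\infty$ only when $\varsigma\ge\frac{n-4s-\al}{n-2s-\al}=2\tau$ — this is the content of \eqref{a2.14}--\eqref{a2.14.1}, using $\lam\approx k^{\frac{n-2s}{n-4s-\al}}$ and $\lam\sqrt{1-\bar h^2}\approx\lam^{\frac{n-2s-\al}{n-2s}}$. On the other hand, for the integral of $(1+\lam|z-\xi|)^{-(4s+\frac{n-2s}{2}+\tau-\varsigma)}$ against the Riesz kernel to still leave decay of order strictly larger than $\frac{n-2s}{2}+\tau$, one needs $\varsigma<2s$. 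These two requirements are simultaneously satisfiable precisely because hypothesis \eqref{0} — equivalently $\frac{n-4s}{n-2s}<2s-\tfrac12$, cf. the Remark following \eqref{0} — forces $2\tau<2s$; one then fixes $\varsigma\in[2\tau,2s)$ and takes $\theta\in\big(0,\min\{2s-\varsigma,\ \tfrac{n-2s}{2}-\tau\}\big)$, and collecting the estimates above closes the argument. I expect this balancing of the decay gain against the summability of the angular sum over the $k$ bubbles to be the only genuine difficulty; the remainder is routine bookkeeping with Lemmas \ref{lemA.1}, \ref{lemA.2} and \ref{lemA.4}.
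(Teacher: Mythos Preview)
The paper does not prove this lemma at all: it is quoted from \cite{DHW2023A} (Lemma~A.7 there) and stated in the appendix without argument, so there is no in-paper proof to compare against. Your outline is exactly the standard Wei--Yan route for such weighted Riesz-potential estimates (cf.\ \cite{wei2010infinitely}, Lemmas~B.2--B.3, of which the cited result is the fractional analogue): dominate $Z^{2_s^*-2}$ by a sum of rescaled bubble powers, split into diagonal and off-diagonal pieces, treat the diagonal directly via Lemma~\ref{lemA.2} (capping the decay at $n-2s$ when the exponent overshoots), and for the off-diagonal decouple the two centres with Lemma~\ref{lemA.1} before summing the coefficients $(\lam|\xi_m-\xi_j|)^{-\varsigma}$ through Lemma~\ref{lemA.4} and \eqref{a2.14}--\eqref{a2.14.1}. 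The identification of the choice of $\varsigma$ as the only genuine issue is correct, and the partition into $\Omega_j^\pm$ for the case $2_s^*>3$ is indeed the standard fix.

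One small inaccuracy worth cleaning up: the summability threshold $\sum_m(\lam|\xi_m-\xi_j|)^{-\varsigma}\le C$ requires $\varsigma\ge\frac{n-4s-\al}{n-2s-\al}$, a number fixed by the configuration \eqref{1.5} and \emph{independent of the lemma's parameter $\tau$}. It coincides with ``$2\tau$'' only for the specific value $\tau=\frac{n-4s-\al}{2(n-2s-\al)}$ used elsewhere in the paper, whereas the lemma as stated allows any $\tau\in(0,\tfrac{n-2s}{2})$. This does not damage the argument --- the window $\big[\frac{n-4s-\al}{n-2s-\al},\min\{1,2s\}\big)$ for $\varsigma$ is nonempty regardless of $\tau$ (and under the paper's standing hypotheses one has $s>\tfrac12$, so the upper limit is simply $1$) --- but you should separate the two roles of the symbol $\tau$ in your write-up, and note that the full strength of \eqref{0} is not actually needed here.
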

		\begin{lem}(Lemma A.3, \cite{GLN2019})\label{lemC.1}
			Let $\sigma>0$, for any constant $0<\theta<n$, there exists a constant $C>0$, independent of $\sigma$, such that
			\begin{equation*}
				\int_{\Rn\backslash B_\sigma(y)}\frac{1}{|y-z|^{n+2s}}\frac{1}{(1+|z|)^\theta}\,\d z\leq C\Big(\frac{1}{(1+|y|)^{\theta+2s}}+\frac{1}{\sigma^{2s}}\frac{1}{(1+|y|)^\theta}\Big).
			\end{equation*}
		\end{lem}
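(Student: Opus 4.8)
The plan is to prove this estimate by splitting the domain $\Rn\setminus B_\sigma(y)$ according to the size of $|z|$ relative to $|y|$, so that on each piece one can freeze either the singular kernel or the decaying weight at its extreme value; the spirit is the same as in Lemma \ref{lemA.2}. It suffices to bound the integral by $C\big[(1+|y|)^{-\theta-2s}+\sigma^{-2s}(1+|y|)^{-\theta}\big]$. First I would write $\Rn\setminus B_\sigma(y)=\Omega_1\cup\Omega_2$ with $\Omega_1=\{|y-z|\ge\sigma,\ |z|\le|y|/2\}$ and $\Omega_2=\{|y-z|\ge\sigma,\ |z|>|y|/2\}$, and estimate the contribution of each.

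On $\Omega_2$ the weight is essentially constant: since $1+|z|\ge\frac12(1+|y|)$ there, $(1+|z|)^{-\theta}\lesssim(1+|y|)^{-\theta}$, and after the change of variables $w=z-y$,
\[
\int_{\Omega_2}\frac{\d z}{|y-z|^{n+2s}(1+|z|)^{\theta}}\lesssim(1+|y|)^{-\theta}\int_{|w|\ge\sigma}\frac{\d w}{|w|^{n+2s}}\lesssim\sigma^{-2s}(1+|y|)^{-\theta},
\]
where $\int_{|w|\ge\sigma}|w|^{-n-2s}\,\d w$ equals a dimensional constant (depending only on $n$ and $s$) times $\sigma^{-2s}$; this exact identity is precisely what makes the final constant independent of $\sigma$.

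On $\Omega_1$ the kernel is essentially constant, because there $|y-z|\ge|y|-|z|\ge|y|/2$. If $|y|\le2$, then $\Omega_1\subset\{|z|\le1\}$, so $(1+|z|)^{-\theta}$, $(1+|y|)^{-\theta}$ and $(1+|y|)^{-\theta-2s}$ are all comparable to $1$, and $\int_{\Omega_1}|y-z|^{-n-2s}(1+|z|)^{-\theta}\,\d z\lesssim\int_{|w|\ge\sigma}|w|^{-n-2s}\,\d w\lesssim\sigma^{-2s}$, which is absorbed into $\sigma^{-2s}(1+|y|)^{-\theta}$. If $|y|>2$, then on $\Omega_1$ one has $|y-z|\ge|y|/2\gtrsim1+|y|$, hence $|y-z|^{-n-2s}\lesssim(1+|y|)^{-n-2s}$, and combining this with the elementary volume bound $\int_{|z|\le R}(1+|z|)^{-\theta}\,\d z\lesssim(1+R)^{n-\theta}$, valid precisely because $0<\theta<n$, gives
\[
\int_{\Omega_1}\frac{\d z}{|y-z|^{n+2s}(1+|z|)^{\theta}}\lesssim(1+|y|)^{-n-2s}\int_{|z|\le|y|/2}(1+|z|)^{-\theta}\,\d z\lesssim(1+|y|)^{-n-2s}(1+|y|)^{n-\theta}=(1+|y|)^{-\theta-2s}.
\]
Adding the contributions of $\Omega_1$ and $\Omega_2$ yields the asserted bound.

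I do not expect any genuine obstacle here: this is a routine dyadic-type splitting. The only two points that require a little care are the uniformity of the constant in $\sigma$ — which is automatic, since the sole $\sigma$-dependent quantity is $\int_{|w|\ge\sigma}|w|^{-n-2s}\,\d w$, a dimensional constant times $\sigma^{-2s}$ — and the role of the hypothesis $\theta<n$, which is exactly what allows the volume bound $\int_{|z|\le R}(1+|z|)^{-\theta}\,\d z\lesssim(1+R)^{n-\theta}$ to hold without an extra logarithmic factor.
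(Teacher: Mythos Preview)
Your proof is correct. The paper does not give its own proof of this lemma; it simply quotes it from \cite{GLN2019}, so there is nothing to compare against beyond noting that your dyadic splitting into $\{|z|\le|y|/2\}$ and $\{|z|>|y|/2\}$ is the standard route and handles both the $\sigma$-uniformity and the role of the hypothesis $\theta<n$ exactly as required.
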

		\begin{lem}(Lemma A.5, \cite{GLN2019})\label{lemA.7}
			Suppose that $(y-x)^2+t^2=\rho^2$, $t>0$. Then there exists a constant $C>0$ such that
			\begin{equation*}
				|\tilde{Z}_{x_j^\pm,\lam}|\leq\frac{C}{\lam^{\frac{n-2s}{2}}}\frac{1}{(1+|y-x_j^\pm|)^{n-2s}} \quad \text{ and } \quad |\nabla\tilde{Z}_{x_j^\pm,\lam}|\leq\frac{C}{\lam^{\frac{n-2s}{2}}}\frac{1}{(1+|y-x_j^\pm|)^{n-2s+1}},
			\end{equation*}
			where $\tilde{Z}_{x_j^\pm,\lam}$ is the extension of $Z_{x_j^\pm,\lam}$.
		\end{lem}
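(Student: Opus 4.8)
The plan is to prove both bounds from the Poisson representation of the extension, exploiting the concentration of $U_{x_j^\pm,\lam}$. By \eqref{3.A},
\be
\tilde{Z}_{x_j^\pm,\lam}(y,t)=\int_{\Rn}P_s(y-\xi,t)\,\eta(\xi)U_{x_j^\pm,\lam}(\xi)\,\d\xi,\qquad P_s(z,t)=\beta(n,s)\frac{t^{2s}}{(|z|^2+t^2)^{\frac{n+2s}{2}}},
\ee
and $\int_{\Rn}P_s(z,t)\,\d z=1$ for every $t>0$. I will use three elementary facts: $\eta U_{x_j^\pm,\lam}$ is supported in the bounded set $\{\xi:|(|\xi'|,\xi'')-(r_0,y_0'')|\le 2\sigma\}$, which contains $x_j^\pm$ once $k$ is large; the pointwise bound $U_{x_j^\pm,\lam}(\xi)\lesssim\lam^{\frac{n-2s}{2}}(1+\lam|\xi-x_j^\pm|)^{-(n-2s)}$; and, by the rescaling $z=\lam(\xi-x_j^\pm)$ together with $n-2s<n$, the mass bound $\int_{\Rn}\eta\,U_{x_j^\pm,\lam}\,\d\xi\lesssim\lam^{-\frac{n-2s}{2}}$. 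Finally, on the sphere $|y-x_j^\pm|^2+t^2=\rho^2$ the radius $\rho$ is a fixed positive constant and $|y-x_j^\pm|\le\rho$, so both $1+|y-x_j^\pm|$ and $\rho$ are comparable to $1$; hence it suffices to prove $|\tilde{Z}_{x_j^\pm,\lam}(y,t)|\lesssim\lam^{-\frac{n-2s}{2}}$ and $|\nabla\tilde{Z}_{x_j^\pm,\lam}(y,t)|\lesssim\lam^{-\frac{n-2s}{2}}$.

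For the first bound I split $\Rn=B_{\rho/2}(x_j^\pm)\cup\big(\Rn\setminus B_{\rho/2}(x_j^\pm)\big)$. On $B_{\rho/2}(x_j^\pm)$ one has $|y-\xi|^2+t^2=|(y,t)-(\xi,0)|^2\ge\big(\rho-|\xi-x_j^\pm|\big)^2\ge\rho^2/4$, so $P_s(y-\xi,t)\lesssim t^{2s}\rho^{-(n+2s)}\lesssim 1$ and that piece is $\lesssim\int_{B_{\rho/2}(x_j^\pm)}U_{x_j^\pm,\lam}\lesssim\lam^{-\frac{n-2s}{2}}$. On the complement $\eta U_{x_j^\pm,\lam}(\xi)\lesssim\lam^{\frac{n-2s}{2}}(\lam\rho/2)^{-(n-2s)}\lesssim\lam^{-\frac{n-2s}{2}}$ pointwise, so that piece is $\lesssim\lam^{-\frac{n-2s}{2}}\int_{\Rn}P_s(y-\xi,t)\,\d\xi=\lam^{-\frac{n-2s}{2}}$. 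Adding the two pieces proves the first estimate.

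For the gradient I differentiate under the integral sign, using $|\nabla_y P_s(z,t)|\lesssim t^{2s}(|z|^2+t^2)^{-\frac{n+2s+1}{2}}$ and $|\pa_t P_s(z,t)|\lesssim t^{2s-1}(|z|^2+t^2)^{-\frac{n+2s}{2}}$. Since $s$ satisfies \eqref{0} one has $2s>1$, so on any region where $|z|^2+t^2\gtrsim 1$ these quantities stay $\lesssim\rho^{2s-1}\lesssim 1$ (recall $0<t\le\rho$). If $|y-x_j^\pm|\le\rho/2$, then $t^2=\rho^2-|y-x_j^\pm|^2\gtrsim 1$, hence $|\nabla_{(y,t)}P_s(y-\xi,t)|\lesssim 1$ uniformly in $\xi$ and $|\nabla\tilde{Z}_{x_j^\pm,\lam}(y,t)|\lesssim\int_{\Rn}\eta U_{x_j^\pm,\lam}\lesssim\lam^{-\frac{n-2s}{2}}$. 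If instead $|y-x_j^\pm|>\rho/2$, set $R=|y-x_j^\pm|/2>\rho/4$ and split the $\xi$-integral over $B_R(y)$ and its complement. The convex ball $B_R(y)$ stays at distance $\ge R$ from $x_j^\pm$, so there $\eta U_{x_j^\pm,\lam}$ is smooth with $|\nabla(\eta U_{x_j^\pm,\lam})|\lesssim\lam^{-\frac{n-2s}{2}}$; writing $\eta U_{x_j^\pm,\lam}(\xi)=[\eta U_{x_j^\pm,\lam}(\xi)-\eta U_{x_j^\pm,\lam}(y)]+\eta U_{x_j^\pm,\lam}(y)$, the first part contributes $\lesssim\lam^{-\frac{n-2s}{2}}\int_{\Rn}|\nabla_{(y,t)}P_s(y-\xi,t)|\,|y-\xi|\,\d\xi\lesssim\lam^{-\frac{n-2s}{2}}$, while the second is $|\eta U_{x_j^\pm,\lam}(y)|\lesssim\lam^{-\frac{n-2s}{2}}$ times $\int_{B_R(y)}\nabla_{(y,t)}P_s(y-\xi,t)\,\d\xi$, which vanishes identically in the $y$-variables (odd integrand over a ball centred at $y$) and is $O(1)$ in the $t$-variable (it equals $\pa_t[1-O((t/R)^{2s})]=O(t^{2s-1}/R^{2s})$). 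On the complement of $B_R(y)$ one has $|y-\xi|\ge R\gtrsim\rho$, so $|\nabla_{(y,t)}P_s(y-\xi,t)|\lesssim\rho^{2s-1}|y-\xi|^{-(n+2s)}$, and splitting the support of $\eta$ once more into a part near $x_j^\pm$ (where $|y-\xi|\gtrsim\rho$, so the kernel is bounded and one uses the $L^1$-mass bound) and its complement (where $\eta U_{x_j^\pm,\lam}\lesssim\lam^{-\frac{n-2s}{2}}$ and $\int_{|y-\xi|\gtrsim\rho}|y-\xi|^{-(n+2s)}\,\d\xi\lesssim 1$) again yields $\lesssim\lam^{-\frac{n-2s}{2}}$. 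Collecting the pieces and reinserting the bounded factor $(1+|y-x_j^\pm|)^{-(n-2s+1)}$ gives the second estimate.

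I expect the gradient estimate as $t\to 0$ to be the main obstacle: the crude bound $\int_{\Rn}|\pa_t P_s(y-\xi,t)|\,\d\xi\sim t^{-1}$ degenerates, so one must exploit both the exact cancellation of $\nabla_{(y,t)}P_s$ and the fact that wherever $\eta U_{x_j^\pm,\lam}$ fails to be of size $\lam^{-(n-2s)/2}$ (i.e.\ near $x_j^\pm$), the constraint $|y-x_j^\pm|^2+t^2=\rho^2$ keeps $(y,t)$ at a fixed positive distance from $(x_j^\pm,0)$; the inequality $2s>1$ coming from \eqref{0} is precisely what renders the factor $t^{2s-1}$ harmless on those regions.
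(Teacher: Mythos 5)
The paper does not actually prove this lemma---it is imported verbatim from \cite{GLN2019} (and its fractional analogue in \cite{DHW2023A})---so your self-contained derivation from the Poisson representation \eqref{3.A} is a genuinely independent route, and as far as I can check it is correct. The reduction to the uniform bounds $|\tilde{Z}|,|\nabla\tilde{Z}|\lesssim\lam^{-\frac{n-2s}{2}}$ on the sphere is legitimate, the $L^1$-mass bound $\int\eta U_{x_j^\pm,\lam}\lesssim\lam^{-\frac{n-2s}{2}}$ is right (it uses only $2s>0$), and the zeroth-order estimate goes through cleanly. The delicate point is exactly the one you flag: the $\partial_t$-component as $t\to0^+$. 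Your treatment---odd-symmetry cancellation of $\nabla_zP_s$ over $B_R(0)$, the Lipschitz bound on $\eta U_{x_j^\pm,\lam}$ away from $x_j^\pm$, and the tail computation $\partial_t\int_{|z|>R}P_s(z,t)\,\d z=O(t^{2s-1}R^{-2s})$---is sound, but note that $2s>1$ enters in two places, not one: also in the convergence of $\int_{B_R(0)}|z|\,|\partial_tP_s(z,t)|\,\d z$ uniformly in $t$ (for $2s\le1$ that integral is of order $R^{1-2s}t^{2s-1}$). This dependence is not an artifact of your method: since $t^{1-2s}\partial_t\tilde{Z}\to-\omega_s(-\Delta)^sZ_{x_j^\pm,\lam}$ on $\Rn$ and $(-\Delta)^s(\eta U_{x_j^\pm,\lam})$ does not vanish near $|y-(r_0,y_0'')|=\rho$, the full-gradient bound in the lemma genuinely fails for $2s<1$ at points of $\pa''\mathcal{B}_\rho^+$ with $t$ small, so the lemma as baldly stated should carry the restriction $s>\frac12$; under \eqref{0} (which forces $s>\frac12$ whenever it is satisfiable) your argument closes. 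Two cosmetic remarks: the sphere in the statement is centred at $(r_0,y_0'')$, not at $x_j^\pm$, but since $|x_j^\pm-(r_0,y_0'')|=o(1)$ your distance estimates survive with adjusted constants; and your final subdivision of $\Rn\setminus B_R(y)$ into pieces near and far from $x_j^\pm$ is redundant, as the kernel is already uniformly bounded on all of $\Rn\setminus B_R(y)$ and the mass bound applies directly.
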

		\begin{lem}(Lemma A.11,\cite{DHW2023A})\label{lemA.8}
			For any $\delta>0$, there exists $\rho=\rho(\delta)\in(2\delta,5\delta)$ such that when $n>4s+1$,
			\begin{equation*}
				\int_{\pa''\mathcal B_\rho^+}t^{1-2s}|\tilde{\varphi}|^2\,\d S \leq \frac{Ck\|\varphi\|_*^2}{\lam^\tau}\quad \text{ and } \quad
				\int_{\pa''\mathcal B_\rho^+}t^{1-2s}|\nabla\tilde{\varphi}|^2\,\d S \leq \frac{Ck\|\varphi\|_*^2}{\lam^\tau},
			\end{equation*}
			where  $\tilde{\varphi}$ is the extension of $\varphi$ and $C>0$ is a constant, depending on $\delta$.
		\end{lem}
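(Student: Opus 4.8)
Following \cite{DHW2023A}, the plan is to turn the curved-boundary integral into a bulk integral over a fixed annular shell around $(r_0,y_0'',0)$ by averaging in the radius, and then to exploit that this shell stays, uniformly in $k$, at a fixed positive distance from all the concentration points. Put
\[
\Phi(r):=\int_{\pa''\mathcal B_r^+}t^{1-2s}\big(|\tilde\varphi|^2+|\nabla\tilde\varphi|^2\big)\,\d S,\qquad r\in(2\delta,5\delta),
\]
so that, by the coarea formula, $\int_{2\delta}^{5\delta}\Phi(r)\,\d r=\int_{\mathcal A}t^{1-2s}\big(|\tilde\varphi|^2+|\nabla\tilde\varphi|^2\big)\,\d Y$, where $\mathcal A:=\{Y=(y,t)\in\overline{\R_+^{n+1}}:2\delta<|Y-(r_0,y_0'',0)|<5\delta,\ t\ge0\}$. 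By the mean value theorem there is $\rho=\rho(\delta)\in(2\delta,5\delta)$ with $\Phi(\rho)\le\tfrac{1}{3\delta}\int_{\mathcal A}t^{1-2s}\big(|\tilde\varphi|^2+|\nabla\tilde\varphi|^2\big)\,\d Y$, so it suffices to bound this bulk integral (one may also enlarge $\Phi$ by the $\pa'\mathcal B_r^+$-traces, handled identically, to serve the later uses of the lemma).

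Next I would record that, for $k$ large, every bubble $x_j^{\pm}$ lies on $\{|y'|=\bar r,\ y''=\bar y''\}$ with $|(\bar r,\bar y'')-(r_0,y_0'')|=o(1)$, hence $|y-x_j^{\pm}|\ge\delta$ for all $(y,t)\in\mathcal A$ and all $j$; therefore on $\mathcal A$
\[
\sum_{j=1}^{k}\Big(\tfrac{1}{(1+\lam|y-x_j^{+}|)^{\frac{n-2s}{2}+\tau}}+\tfrac{1}{(1+\lam|y-x_j^{-}|)^{\frac{n-2s}{2}+\tau}}\Big)\lesssim_{\delta}\frac{k}{\lam^{\frac{n-2s}{2}+\tau}},
\]
and similarly with exponent $\frac{n+2s}{2}+\tau$. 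Using the Poisson representation $\tilde\varphi=\mathcal P_s[\varphi]$, the bound $|\varphi|\le\|\varphi\|_*\lam^{\frac{n-2s}{2}}\sum_j(\cdots)$, and Lemma \ref{lemA.1} and Lemma \ref{lemA.2}, I would obtain pointwise estimates of the form $|\tilde\varphi(Y)|+|\nabla\tilde\varphi(Y)|\lesssim_{\delta}\lam^{-\tau}\,k\,\|\varphi\|_*$ on $\mathcal A$ (the analogue of Lemma \ref{lemA.7} with the slow weight $\tfrac{n-2s}{2}+\tau$, the relevant arithmetic fact being $\tfrac{n-2s}{2}+\tau<n$, so that $\int_{B_\delta(x_j^{\pm})}(1+\lam|\xi-x_j^{\pm}|)^{-\frac{n-2s}{2}-\tau}\,\d\xi=O_\delta(\lam^{-\frac{n-2s}{2}-\tau})$). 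Moreover, since on $\overline{\mathcal A}\cap\{t=0\}$ the functions $\eta U_{x_j^{\pm},\lam}$, $Z_{\bar r,\bar h,\bar y'',\lam}$, $N(\varphi)$, $l_k$ and the $c_l$-terms all carry these same spatial weights, the Neumann datum
\[
-\lim_{t\to0}t^{1-2s}\pa_t\tilde\varphi=\omega_s\big(-V\varphi+(2_s^*-1)KZ_{\bar r,\bar h,\bar y'',\lam}^{2_s^*-2}\varphi+N(\varphi)+l_k+\textstyle\sum_l c_l\sum_j(Z_{x_j^+,\lam}^{2_s^*-2}Z_{j,l}^++Z_{x_j^-,\lam}^{2_s^*-2}Z_{j,l}^-)\big)
\]
provided by \eqref{2.22} is, in size, dominated by $|V\varphi|$ there, all the remaining pieces being of strictly lower order.

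For the $|\tilde\varphi|^2$-part of the bulk integral, the pointwise bound together with $\int_{\mathcal A}t^{1-2s}\,\d Y\lesssim_\delta1$ suffices. For the $|\nabla\tilde\varphi|^2$-part I would run a Caccioppoli estimate: pick a cutoff $\zeta$ equal to $1$ on $\mathcal A$, vanishing on $\{|Y-(r_0,y_0'',0)|<\delta\}$ and near infinity, test $\operatorname{div}(t^{1-2s}\nabla\tilde\varphi)=0$ against $\zeta^2\tilde\varphi$, and use $-\lim_{t\to0}t^{1-2s}\pa_t\tilde\varphi=\omega_s(-\Delta)^s\varphi$ to get
\[
\int_{\R_+^{n+1}}t^{1-2s}\zeta^2|\nabla\tilde\varphi|^2\;\lesssim\;\Big|\int_{\Rn}\zeta^2\varphi\,(-\Delta)^s\varphi\Big|+\int_{\R_+^{n+1}}t^{1-2s}|\nabla\zeta|^2\,\tilde\varphi^2 ;
\]
both right-hand terms are supported at distance $\gtrsim\delta$ from the bubbles, where by the previous step $Z_{\bar r,\bar h,\bar y'',\lam}$, $l_k$, $N(\varphi)$ and the $c_l$-terms are negligible while $\varphi$, $\tilde\varphi$, $\nabla\tilde\varphi$ obey the stated decay, so that Lemma \ref{lemA.1}, Lemma \ref{lemA.2} and Lemma \ref{lemA.4} (for the sums over the $2k$ points) close the estimate; the global identity $\int_{\R_+^{n+1}}t^{1-2s}|\nabla\tilde\varphi|^2=\omega_s\|(-\Delta)^{s/2}\varphi\|_{L^2(\Rn)}^2$, obtained by testing \eqref{2.22} with $\varphi$ and using $\varphi\in\mathbb H$, disposes of the non-local interaction. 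Combining the three steps yields the asserted bound.

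The delicate point is the bookkeeping of powers of $\lam$: the gain of the factor $\lam^{-\tau}$ over the trivial $O(k\|\varphi\|_*^2)$ bound is precisely what the fixed separation between $\pa''\mathcal B_\rho^+$ and the concentration set buys, and it has to be tracked both through the Poisson kernel and through the boundary term of the Caccioppoli identity; in particular one must check that the contributions generated by $N(\varphi)$ and by the constraint terms $c_l Z_{x_j^{\pm},\lam}^{2_s^*-2}Z_{j,l}^{\pm}$ are of lower order, which is where the hypothesis $n>4s+1$ (and, in the range $n\ge4$, the restriction \eqref{0} on $s$) enters.
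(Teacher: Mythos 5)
First, note that the paper does not prove this lemma at all: it is quoted verbatim from \cite{DHW2023A} (Lemma A.11 there), so there is no in-paper argument to compare yours against. Your overall strategy --- radial averaging to replace the curved boundary by a bulk shell, uniform separation of the shell from the concentration points, the Poisson representation for $\tilde\varphi$, and a Caccioppoli inequality for the gradient --- is indeed the standard route in this literature and is structurally sound.

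There is, however, a genuine quantitative gap at the decisive step. Your pointwise bound $|\tilde\varphi|\lesssim_{\delta}k\lam^{-\tau}\|\varphi\|_*$ on $\mathcal A$ is correct (all $2k$ bubbles sit at distance $\approx\delta$ from the shell, so each contributes comparably and the factor $k$ cannot be avoided), but squaring it and integrating against $t^{1-2s}\,\d S$ yields $Ck^{2}\lam^{-2\tau}\|\varphi\|_*^{2}$, which exceeds the claimed bound $Ck\lam^{-\tau}\|\varphi\|_*^{2}$ by the factor $k\lam^{-\tau}$. You assert this ``suffices'' without checking that $k\lesssim\lam^{\tau}$, and this inequality is not available in all the regimes where the lemma is invoked: in the setting of Theorem \ref{theo1.4} one has $\tau=\frac{n-4s}{2(n-2s)}$ and $\lam\sim k^{\frac{n-2s}{n-4s}}$, hence $\lam^{\tau}\sim k^{1/2}$ and $k\lam^{-\tau}\sim k^{1/2}\to\infty$; even in the setting of Theorem \ref{theo1.1}, $k\lesssim\lam^{\tau}$ is equivalent to $\al\geq\frac{n-2s}{2}$, which must be verified against \eqref{0} and fails for part of the admissible range when $n=5$. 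The same factor reappears in your Caccioppoli step, since both right-hand terms there reduce to $\int\zeta^{2}\tilde\varphi^{2}$-type quantities. To close the gap you must not estimate each factor of $\big(\sum_j(\cdots)\big)^{2}$ by its supremum on the shell; instead expand the square and treat the off-diagonal terms with Lemma \ref{lemA.1} (taking $\varsigma=2\tau$ so that \eqref{a2.14} makes $\sum_{j\neq i}(\lam|x_i^{\pm}-x_j^{\pm}|)^{-2\tau}$ bounded), which converts the naive $k^{2}$ into a single factor of $k$. Be aware, though, that this delivers $Ck\|\varphi\|_*^{2}$ rather than $Ck\lam^{-\tau}\|\varphi\|_*^{2}$, so recovering the extra $\lam^{-\tau}$ requires a further input from \cite{DHW2023A} that your sketch does not supply; you should either track down that argument or observe that for the paper's actual application (Lemma \ref{lem3.2}, where the target is $O(k\lam^{-2s-\var})$ and $\|\varphi\|_*\lesssim\lam^{-\frac{2s+1-\beta}{2}-\var}$ with $\beta<1$) the weaker bound $Ck\|\varphi\|_*^{2}$ already suffices.
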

		
		\subsection*{Conflict of interest statement} On behalf of all authors, the corresponding author states that there is no conflict of interest.
		
		\subsection*{Data availability statement} Data sharing not applicable to this article as no datasets were generated or analysed during the current study.

		\medskip
		\noindent T. Li
		
		\noindent School of Mathematical Sciences, Laboratory of Mathematics and Complex Systems, MOE\\
		Beijing Normal University, Beijing 100875, People’s Republic of China\\
		Email: \textsf{ting.li@mail.bnu.edu.cn}
		
		\medskip
		
		\noindent Z. Tang
		
		\noindent School of Mathematical Sciences, Laboratory of Mathematics and Complex Systems, MOE\\
		Beijing Normal University, Beijing 100875, People’s Republic of China\\
		Email: \textsf{tangzw@bnu.edu.cn}
		
		\medskip
		\noindent H. Wang
		
		\noindent School of Mathematical Sciences, Laboratory of Mathematics and Complex Systems, MOE\\
		Beijing Normal University, Beijing 100875, People’s Republic of China\\
		Email: \textsf{hmw@mail.bnu.edu.cn}

		\medskip

		\noindent X. Zhang
		
		\noindent School of Mathematical Sciences, Laboratory of Mathematics and Complex Systems, MOE\\
		Beijing Normal University, Beijing 100875, People’s Republic of China\\
		Email: \textsf{xjzhangk@mail.bnu.edu.cn}

	\end{document}